\documentclass[12pt, reqno]{amsart}
\usepackage{amsmath, amsthm, amscd, amssymb, graphicx, color}
\usepackage{lmodern}
\usepackage{mathrsfs}
\usepackage{array}
\usepackage{tabularx}
\usepackage[colorlinks=true, linkcolor=blue, citecolor=blue, urlcolor=blue]{hyperref}
\usepackage{enumitem} 
\newtheorem{theorem}{Theorem}[section]
\newtheorem{lemma}[theorem]{Lemma}

\newtheorem{corollary}[theorem]{Corollary}
\theoremstyle{definition}
\newtheorem{definition}[theorem]{Definition}

\theoremstyle{remark}
\newtheorem{remark}[theorem]{Remark}
\numberwithin{equation}{section}
\newcommand{\tp}{\mathcal{T}_\phi}
\newcommand{\hp}{\mathcal{H}_\phi}

\newcommand{\h}{\mathcal{H}}

\newcommand{\f}{\mathcal{J}}

\newcommand{\1}{\theta H^2}
\newcommand{\2}{K_\theta}
\newcommand{\ch}{\widehat{H}}
\newcommand{\ho}{\overline{H_0^2}}
\newcommand{\pt}{P_\theta}
\newcommand{\li}{L^\infty}
\usepackage{tikz-cd}
\renewcommand{\arraystretch}{1.4}

\begin{document}
	
	\title[Restricted Toeplitz and Hankel Operators]{Restricted Toeplitz and Hankel Operators}
	
	\author[Priyanka Aroda, Arup Chattopadhyay, and Supratim Jana]{Priyanka Aroda$^*$, Arup Chattopadhyay$^{**}$ and Supratim Jana$^{***}$}
	\maketitle
	
	\paragraph{\textbf{Abstract}}
	We introduce and systematically study a class of operators that arise naturally due to the Beurling decomposition of the Hardy space $H^2=\1 \oplus \2$. While the compressions of classical Toeplitz and Hankel operators to the Beurling subspace $\1$ and the model space $\2$ account for the diagonal components of the decomposition, the corresponding off-diagonal operators have remained largely unexplored. Motivated by this, we introduce and analyze a new class of operators, termed \emph{restricted Toeplitz} and \emph{restricted Hankel operators}, acting between Beurling subspace $\eta H^2$ and model space $\2$.

	Within this framework, we obtain necessary and sufficient conditions for the vanishing, finite-rank, and compactness properties of these operators. We further establish algebraic characterizations in the spirit of Brown–Halmos \cite{BH} and Sarason \cite{SAR, DES}, showing that these operators can be identified through certain operator equations involving compressed shifts. As an application, we introduce the notions of small and big truncated Toeplitz operators, and provide criteria for when they vanish, have finite rank, or are compact.

	\vspace{0.5cm}
	\paragraph{\textbf{Keywords}} Hardy space, Inner function, Model space, Compressed shift, Truncated Toeplitz operator, Truncated Hankel operator.
	
	\vspace{0.5cm}
	\paragraph{\textbf{Mathematics Subject Classification (2020)}} Primary: 47B35.
	
	\tableofcontents	
	
	\section{{Introduction and Preliminaries}}
	Let $\mathbb{D}:=\{ z \in \mathbb{C}: |z| < 1 \}$ denote the unit disk in the complex plane $\mathbb{C}$ and $\mathbb{T}:= \{ z \in \mathbb{C}: |z|=1 \}$ its boundary. Also, let $L^2$ denote the space of square integrable functions on the unit circle $\mathbb{T}$ with respect to the normalized Lebesgue measure. And by $L^\infty$, we denote the von Neumann algebra of essentially bounded Lebesgue measurable functions on $\mathbb{T}.$ 
	
	The \emph{Hardy-Hilbert space} $H^2(\mathbb{D})$ is the space of holomorphic functions on $\mathbb{D}$ defined by
	$$ H^2(\mathbb{D}):= \left\{ f(z) = \sum_{n=0}^\infty a_n z^n \;: \; \sum_{n=0}^\infty |a_n|^2 < \infty \right\}.$$ 
	The Banach algebra of bounded analytic functions on $\mathbb{D}$ is denoted as $H^\infty(\mathbb{D})$. By the elegance of Fatou's theorem \cite{NKN, VVP}, the space $H^2(\mathbb{D})$ is identified (through boundary limit) as a closed subspace of $L^2$, whose every element only has non-negative Fourier coefficients, and it is denoted by $H^2(\mathbb{T})$. For convenience, we write $H^2$ and $H^\infty(:= \li \cap H^2)$ irrespective of $\mathbb{D}$ and $\mathbb{T}$, according to the context. An inner function $\theta$ is an $H^\infty$ element whose boundary value is unimodular almost everywhere over $\mathbb{T}.$


	In the mid 1960's, Brown-Halmos introduced the Toeplitz operator $T_\phi$ in their seminal paper \cite{BH}, and they defined this as:
	
	$$ T_\phi: H^2 \rightarrow H^2 \text{ by } T_\phi (f)=P(\phi f) \quad \text{ for $\phi\in L^\infty$, } $$
	where $P: L^2 \rightarrow H^2$ is the orthogonal projection (also known as the Szeg\"o projection). And the corresponding Hankel operator $H_\phi$ is defined as:
	\begin{equation}\label{han}
		H_\phi:H^2 \rightarrow \overline{H_0^2}\left(:= ({H^2})^\perp \right) \text{ by } H_\phi (f)=(I-P)(\phi f),
	\end{equation}
	where $H_0^2= z H^2$ and $\overline{H_0^2}$ stands for the complex conjugate of the elements of $ H_0^2$.
	
	In the literature, for the sake of studying several properties (like normality and related properties), the Hankel operator is also defined as
	$$ H_\phi: H^2\rightarrow H^2 \text{ by } H_\phi (f)= \mathcal{J}(I-P)(\phi f)= P\mathcal{J}(\phi f),$$ where $\f:L^2 \rightarrow L^2$ is a unitary involution, known as the \emph{flip} operator, defined by $(\mathcal{J}f)(\xi) = \bar{\xi}f(\bar{\xi}) \text{ for } \xi\in \mathbb{T}$. To distinguish, we use the notation $\widehat{H}_{\phi}$ for the Hankel operator with co-domain in $\overline{H_0^2}$ (see~\eqref{han}). We also sometimes use the notation $Q$ for $I-P$.
	\vspace{0.1in}
	
	The most elementary Toeplitz operator that holds a significant importance in the study of Hardy spaces is $T_z$, which is known as the (forward) shift operator $S$. The Beurling's theorem \cite{AB}, a cornerstone of function theory, provides a complete characterization of all $S$-invariant subspaces of $H^2$. And they are precisely of the form $\1$, where $\theta$ is an inner function. It follows that the invariant subspace under the action of $S^*$ (the backward shift operator) is $({\theta H^2})^\perp$ in $H^2,$ and they are known as the Model spaces, denoted by $K_\theta$, and determined by $H^2\cap\theta~\ho$.
	\vspace{0.1in}
	
	The theory of Toeplitz and Hankel operators on $H^2$ has long been a center of attraction to operator theory and complex analysis. Foundational work by Brown--Halmos, Sarason, Nehari, and many others has led to rich algebraic and analytic characterizations of these operators along with their symbols. Over the years, theories have been developed in several directions, one of the most significant being the study of compressed operators acting on invariant and coinvariant subspaces of $H^2$. 
	
	In 2007, Sarason in \cite{DES} introduced the notion of the truncated Toeplitz operators, which are essentially compressions of the classical Toeplitz operators to the model spaces $\2$. It is presented as a densely defined operator given by
	$$A_\phi^\theta:\2\rightarrow\2 \quad \text{ as }\quad A_\phi^\theta(h) = P_\theta(\phi h),$$
	where $\phi \in L^2(\mathbb{T})$, $h \in K_\theta$ with $\phi h \in L^2(\mathbb{T})$, and $P_\theta : L^2(\mathbb{T}) \to K_\theta$ denotes the orthogonal projection. In particular, $A_z^\theta =\pt S|_{\2}(:=S_\theta)$, and it is known as the compressed shift operator. Since its introduction, this framework has been widely adopted to develop a diverse range of theories. 
	\vspace{0.1in}
	
	At the beginning of the last decade, Gu introduced the densely defined truncated Hankel operators (THO) on the model space $\2$ in a preprint as 
	$$B_\phi^\theta:\2\rightarrow\2 ~\text{ by }~ B_\phi^\theta(h) = P_\theta\mathcal{J}(\phi h), ~ \phi \in L^2, \; h \in K_\theta \cap L^\infty,$$
	where $\mathcal{J}$ is the flip operator. And recently, this work was revised and published jointly by Gu-Ma in \cite{GM}.
	\vspace{0.1in}

	Considering the decomposition $H^2=\1\oplus \2$, we present the Toeplitz and the Hankel operators in the following block-matrix form:
	$$ \scalebox{1.1}{$T_\phi :=
		\begin{array}{cc|c}
			\theta H^2 & K_\theta & \\ \cline{1-2}
			\widetilde{T}_\phi & \widehat{X} & \theta H^2 \\
			X & A_\phi^\theta & K_\theta
		\end{array}$}
	~\text{ , and } ~ 
	\scalebox{1.1}{$H_\phi :=
		\begin{array}{cc|c}
			\theta H^2 & K_\theta \\ \cline{1-2}
			\widetilde{H}_\phi & \widehat{Y} & \theta H^2 \\
			Y & B_\phi^\theta & K_\theta 
		\end{array}.$} $$
	\vspace{0.1in}
	
	The operators $\widetilde{T}_\phi, \widetilde{H}_\phi:\1\rightarrow\1$ in the $(1,1)$ positions are the compressions of the Toeplitz and the Hankel operators to the Beurling subspace $\1$. And their actions are exactly similar to classical Toeplitz and Hankel operators because $\1$ is isometric to $H^2$, and the operator $\mathcal{S}_{(\theta)}:= P_{\1}S|_{\1}$ is similar to $S$. 
	
	Whereas, the operators appearing in the $(2,2)$ positions are the truncated Toeplitz and the truncated Hankel operators. These operators, however, exhibit markedly different structures and behaviors. A substantial body of versatile theory of these truncated operators has been developed recently by various mathematicians, such as the study of symbols (\cite{BCFMT, BBK}), spatial isomorphism and unitary equivalence (\cite{CGRW, GRW1, GRW2, GPR}), compactness (\cite{GM, MZ1}), etc. 
	\vspace{0.1in}
	
	Therefore, it is evident that the study of both diagonal operators is well-established and has been ongoing. In contrast, the off-diagonal (or cross-diagonal) operators $X$, $\widehat{X}$, $Y$, and $\widehat{Y}$ have not been systematically investigated. The purpose of this article is to identify and study these less-explored operators. To address this, we introduce the following two operators.
	
	\begin{definition}\label{def}
		Given two inner functions $\eta$ and $\theta$, and $\phi\in \li$, the restricted Toeplitz operator (RTO) is defined as: 
		\begin{equation} \label{rto}
			\tp: \eta H^2 \rightarrow \2  \quad \text{ by } \quad \tp (h)= \pt(\phi h),
		\end{equation} and the restricted Hankel operator (RHO) is defined as: 
		\begin{equation}\label{rho}
			\hp: \eta H^2 \rightarrow \2 \quad \text{ by } \quad \hp (h)= \pt\f(\phi h).
		\end{equation}
	\end{definition}
	
	Observe that both the domain and range spaces in this context are precisely the subspaces characterized by Beurling's theorem—one being invariant under $S$, the other invariant under $S^*$. Moreover, when $\eta=\theta$, they occur in the $(2,1)$ positions of the above block matrices.
	\vspace{0.1in}
	
	Also, to address the complementary off-diagonal operators $\widehat{X}, \widehat{Y}$ in the $(1,2)$ positions, we consider the following two operators, which will play a useful role in the upcoming sections.
	\begin{equation}\label{1}
		\tau_\phi:\2 \rightarrow \eta H^2 \text{ by } \tau_\phi(h)= P_{\eta H^2} (\phi h),  \quad \text{for $\phi \in \li$ }
	\end{equation}
	\text{ and }
	\begin{equation}\label{2}
		h_\phi:\2 \rightarrow \eta H^2 \text{ by } h_\phi (h) = P_{\eta H^2} \f (\phi h), \quad \text{for $\phi \in \li$. }
	\end{equation}
	These two operators appear to be the adjoints of RTO and RHO, respectively, via 
	\begin{equation}\label{adj}
		\tau_\phi = \mathcal{T}_{\bar{\phi}}^* \quad \text{ and } \quad h_\phi = \mathcal{H}_{{\phi}^*}^*,
	\end{equation}
	where $\phi^*(\xi)=\overline{\phi(\overline{\xi})}$ for $\phi\in L^2$. 
	\vspace{0.1in}

	In our first main result, we study some analytic properties of restricted Toeplitz and Hankel operators. Inspired by the work of Ma-Yan-Zheng \cite{MYZ, MZ1}, we establish necessary and sufficient conditions for these operators to be zero, finite rank, and compact. These characterizations depend on the interaction between the symbol $\phi$ and the inner functions $\eta$ and $\theta$. In particular, the compactness involves the behavior of these functions on support sets associated with the maximal ideal space of $L^\infty$.
	\vspace{0.1in}
	
	In addition to these analytic properties, we establish algebraic characterizations of restricted operators parallel to classical ones. By carefully analyzing the algebraic interplay, we obtain characterizations formulated in terms of operator equations. The following is the characterization theorem for the restricted Toeplitz operator (RTO). 
	
	\begin{theorem}\label{3th1}
		Let $\eta, \theta$ be two inner functions, and let $X:\eta H^2\rightarrow\2$ be a bounded linear operator. Then, $X$ is a restricted Toeplitz operator if and only if 
		$$  X- S_\theta^* X \mathcal{S}_{(\eta)} = S^* (\theta) \otimes P(\theta \overline{z\phi\eta}), $$
		for some $\phi\in L^{\infty}$. Moreover, $X=\tp$.  
	\end{theorem}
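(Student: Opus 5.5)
The plan is to compute $X-S_\theta^*X\mathcal{S}_{(\eta)}$ directly for $X=\tp$ (necessity), and then to show that the operator equation determines $X$ uniquely once $\phi$ is fixed (sufficiency).

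\emph{Necessity.} Since $\eta H^2$ is $S$-invariant, $\mathcal{S}_{(\eta)}=S|_{\eta H^2}$, so $\mathcal{S}_{(\eta)}(\eta f)=\eta z f$ for $f\in H^2$. I would use two elementary identities. First, $S^*P g=P(\bar z g)$ for $g\in L^2$. Second, $S^*(\theta u)=\theta S^*u+u(0)S^*\theta$ for $u\in H^2$. Writing $P_\theta=P-\theta P\bar\theta$, these give
\[
S_\theta^*\tp(\eta z f)=S^*P(\phi\eta z f)-S^*\bigl(\theta P(\bar\theta\phi\eta z f)\bigr)
= P_\theta(\phi\eta f)-\bigl\langle \bar\theta\phi\eta z f,1\bigr\rangle\, S^*\theta .
\]
Therefore
\[
(\tp-S_\theta^*\tp\mathcal{S}_{(\eta)})(\eta f)=\langle \bar\theta\phi z\,\eta f,1\rangle\,S^*\theta .
\]
This is a rank-one operator with range spanned by $S^*\theta=S^*(\theta)$. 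The remaining step is to rewrite the scalar coefficient in the paper's tensor notation. Pairing $\eta f$ against $\theta\overline{z\phi}$, or equivalently $f$ against $\theta\overline{z\phi\eta}$, and projecting onto analytic parts, the functional is given by $P(\theta\overline{z\phi\eta})$ (identified inside $\eta H^2$ via multiplication by $\eta$). This is exactly the right-hand side of the stated equation. I expect this bookkeeping, i.e.\ matching the convention for $u\otimes v$ on $\eta H^2$, to be the only fiddly point here.

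\emph{Sufficiency.} Suppose $X$ satisfies the equation for some $\phi\in L^\infty$, and put $D=X-\tp$. By the necessity part, $\tp$ satisfies the same equation with the same right-hand side. Subtracting gives $D=S_\theta^*D\mathcal{S}_{(\eta)}$, and iterating gives $D=S_\theta^{*n}D\mathcal{S}_{(\eta)}^n$ for every $n$. For $h\in\eta H^2$ and $k\in K_\theta$ this yields
\[
|\langle Dh,k\rangle|=|\langle D\mathcal{S}_{(\eta)}^n h, S_\theta^n k\rangle|\le \|D\|\,\|h\|\,\|S_\theta^n k\| ,
\]
using that $\mathcal{S}_{(\eta)}$ is an isometry.

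The key input, and the step I regard as the main obstacle, is that $S_\theta^n k\to 0$ in norm for every $k\in K_\theta$. I would justify this by the Sz.-Nagy--Foias theory: $S_\theta$ is a $C_0$ contraction annihilated by $\theta$, hence of class $C_{00}$. A more hands-on route is to use $S_\theta^n k=P_\theta(z^nk)=\theta\,Q(\bar\theta z^n k)$. The norm of this equals $\|Q(\bar\theta z^n k)\|$, the $L^2$ norm of the negative-frequency part of $z^n\bar\theta k$. That norm tends to $0$ because $\bar\theta k$ has a fixed $L^2$ Fourier series whose negative tail beyond $-n$ shrinks.

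Granting this, $\langle Dh,k\rangle=0$ for all $h,k$. Hence $D=0$ and $X=\tp$, which also gives the ``moreover'' assertion.
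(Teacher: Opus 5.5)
Your proof is correct, and it differs from the paper's in both directions.

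\textbf{Necessity.} The paper invokes Lemma~\ref{2l1} to identify $\mathcal{T}_\phi$ (through $\eta f\mapsto f$) with $H_{\breve{\theta}}H_{\bar{\theta}\phi\eta}$ on $H^2$. The Hankel intertwinings $S^*H=HS$, together with $SS^*=I-e_0\otimes e_0$, then produce the defect $H_{\breve{\theta}}(e_0)\otimes H_{\bar{\theta}\phi\eta}^*(e_0)=S^*(\theta)\otimes P(\theta\overline{z\phi\eta})$. Your direct computation with $P_\theta=P-\theta P\bar{\theta}$ and $S^*(\theta u)=\theta S^*u+u(0)S^*\theta$ gives the same rank-one term without the Hankel machinery. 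Your reading of the tensor via $\eta f\leftrightarrow f$ is exactly the convention the paper uses implicitly.

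\textbf{Sufficiency.} The paper transfers $D=S_\theta^*D\mathcal{S}_{(\eta)}$ to the Brown--Halmos equation $S^*\widetilde{D}S=\widetilde{D}$ on $H^2$. It concludes $\widetilde{D}=T_\psi$ and then shows that a Toeplitz operator with range in $K_\theta$ must vanish. You replace Brown--Halmos and this range analysis by one fact: $S_\theta^n\to 0$ strongly while $\mathcal{S}_{(\eta)}$ is an isometry. Your Fourier-tail proof of $\|S_\theta^nk\|=\|Q(\bar{\theta}z^nk)\|\to 0$ is complete once you note $S_\theta^n=P_\theta S^n|_{K_\theta}$, which holds because $\theta H^2$ is $S$-invariant.

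\textbf{Comparison.} Your route is shorter and self-contained. It also avoids a point the paper's reduction must handle with care. The transfer to $H^2$ has to use $\widetilde{D}=DM_\eta$, because extending $D$ by zero on $K_\eta$ does not visibly give $S^*\widetilde{D}S=\widetilde{D}$ ($S$ does not preserve $K_\eta$). The paper's route, in turn, ties the theorem to the classical Brown--Halmos characterization and to the Hankel-product picture that drives Section~2.
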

	
	The corresponding characterization for restricted Hankel operators is as follows.
	
	\begin{theorem}\label{3th2}
		Let $\eta, \theta$ be two inner functions and let $X:\eta H^2\rightarrow\2$ be a bounded linear operator. Then, $X$ is a restricted Hankel operator if and only if
		\begin{equation*}\label{3eq5}
			X\mathcal{S}_{(\eta)} - S_\theta^*X = S^*({\theta}) \otimes P(\breve{\theta}~{\overline{z\eta\phi}}),
		\end{equation*}
		for some $\phi\in L^{\infty}$.
	\end{theorem}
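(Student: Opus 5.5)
The plan has two directions: a direct computation with the flip operator for necessity, and a reduction to the classical Hankel characterization plus Nehari's theorem for sufficiency. I use $\breve{\theta}$ in the sense fixed by the paper, presumably $\breve\theta(\xi)=\theta(\bar\xi)$ or a closely related variant. I have not checked which convention makes the constants come out exactly.

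\emph{Necessity.} Let $X=\hp$ and take $h=\eta f$ with $f\in H^2$. Three elementary facts drive the computation.
\begin{enumerate}[label=(\roman*)]
\item $\mathcal{S}_{(\eta)}(\eta f)=P_{\eta H^2}(z\eta f)=\eta z f$.
\item $\f(zg)=\bar z\,\f g$ for every $g\in L^2$.
\item $S_\theta^*=S^*|_{\2}$, and $P(\bar z\,Qg)=0$ for every $g\in L^2$.
\end{enumerate}
Write $g=\f(\phi\eta f)$. By (i) and (ii), $X\mathcal{S}_{(\eta)}(\eta f)=\pt(\bar z g)=\pt S^*Pg$, using (iii) to discard $Qg$. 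On the other hand $S_\theta^*X(\eta f)=S^*\pt P g$.

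Now split $Pg=\pt Pg+\theta k$ with $k\in H^2$. Since $\2$ is $S^*$-invariant, $\pt S^*\pt Pg=S^*\pt Pg$. Moreover
$$\pt S^*(\theta k)=\pt\big(\theta S^*k+k(0)S^*\theta\big)=k(0)\,S^*\theta .$$
Hence the commutator is the rank-one expression
$$X\mathcal{S}_{(\eta)}(\eta f)-S_\theta^*X(\eta f)=k(0)\,S^*\theta .$$
It remains to identify the functional. We have $k(0)=\langle \theta k,\theta\rangle=\langle g,\theta\rangle=\langle \f(\phi\eta f),\theta\rangle$. Using that $\f$ is a unitary involution, this equals $\langle \eta f,\ \bar\phi\,\bar\eta\,\f\theta\rangle$ up to the precise form of $\f\theta$. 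Projecting onto $\eta H^2$ rewrites the representing vector as $P(\breve\theta\,\overline{z\eta\phi})$, after the factor $\eta$ is absorbed. I expect the bookkeeping of conjugations and flips here to be the only fiddly point.

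\emph{Sufficiency.} Suppose $X$ satisfies the equation for some $\phi\in\li$. By the necessity part, $Y:=X-\hp$ satisfies $Y\mathcal{S}_{(\eta)}=S_\theta^*Y$. Define $Z:H^2\to\2\subset H^2$ by $Zf=Y(\eta f)$, which is bounded because multiplication by $\eta$ is an isometry. Then $ZS=S^*Z$ as operators $H^2\to H^2$, since $S_\theta^*=S^*|_{\2}$.

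By the classical characterization of Hankel operators on $H^2$, namely operators intertwining $S$ and $S^*$, combined with Nehari's theorem, there is $\psi\in\li$ with $Zf=P\f(\psi f)$. Because $Z$ has range in $\2$, we get $Zf=\pt\f(\psi f)$. Therefore
$$Y(\eta f)=\pt\f\big(\psi\bar\eta\cdot\eta f\big),$$
so $Y=\mathcal{H}_{\psi\bar\eta}$. Since the definition is linear in the symbol, $X=\mathcal{H}_{\phi+\psi\bar\eta}$ is a restricted Hankel operator.

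The main obstacle is the first direction: identifying the rank-one term exactly, with the vector in the form $P(\breve\theta\,\overline{z\eta\phi})$. The sufficiency direction is essentially formal once it is reduced to Nehari's theorem.
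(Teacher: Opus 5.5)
Your proof is correct. The sufficiency half is the paper's argument: subtract the identity satisfied by $\mathcal{H}_\phi$, regard $X-\mathcal{H}_\phi$ as an operator on $H^2$ intertwining $S$ and $S^*$, and apply Nehari's theorem. Your version is slightly cleaner, because you read off $Zf=P_\theta\mathcal{J}(\psi f)$ directly from the fact that $Z$ has range in $K_\theta$. The paper instead routes this step through Lemma~\ref{3l3} to place the new symbol in $\breve{\theta}\bar{\eta}H^\infty$, which the conclusion does not need.

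For necessity you take a different computational route. The paper (Lemma~\ref{3l2}) writes the operator on $H^2$ as $P_\theta H_{\phi\eta}=H_{\phi\eta}-T_\theta H_{\phi\eta\theta^*}$. It then uses $H_\psi S=S^*H_\psi$, $S^*T_\theta S=T_\theta$ and $I=SS^*+e_0\otimes e_0$. You instead split $Pg=P_\theta Pg+\theta k$ and use $S^*(\theta k)=\theta S^*k+k(0)S^*\theta$. Both compute the rank-one commutator $(P_\theta S^*-S^*P_\theta)|_{H^2}=S^*\theta\otimes\theta$, composed on the right with $H_{\phi\eta}$. Your version is more transparent; the paper's stays inside the Toeplitz--Hankel calculus it uses throughout.

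The bookkeeping you left unchecked closes exactly. It has to, since your sufficiency step subtracts the two equations and needs the rank-one terms to agree on the nose. With the paper's conventions $(\mathcal{J}u)(\xi)=\bar\xi u(\bar\xi)$ and $\breve\theta(\xi)=\theta(\bar\xi)$, one has $\mathcal{J}\theta=\bar z\breve\theta$, and $\mathcal{J}$ is self-adjoint. Hence
\[
k(0)=\langle \mathcal{J}(\phi\eta f),\theta\rangle=\langle \phi\eta f,\bar z\breve\theta\rangle=\langle f,P(\breve\theta\,\overline{z\eta\phi})\rangle .
\]
Your intermediate expression $\langle \eta f,\bar\phi\bar\eta\mathcal{J}\theta\rangle$ carries a spurious $\bar\eta$. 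It should read $\langle \eta f,\bar\phi\,\mathcal{J}\theta\rangle=\langle f,\bar\phi\bar\eta\,\mathcal{J}\theta\rangle$. As in the paper, $S^*(\theta)\otimes\Phi$ acts on the $H^2$-coordinate $f$ of $h=\eta f$; viewed on $\eta H^2$ itself, its representing vector is $\eta\Phi$.
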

	
	Moreover, an application of the commutant lifting theorem, we obtain a complete structural description of a certain class of operators $A$ intertwining $\mathcal{S}_{(\eta)}$ and $S_\theta$ as follows.
	
	\begin{theorem}\label{3thm}
		Let $\eta$ and $\theta$ be two inner functions and let $A: \eta H^2\rightarrow\2$ be a bounded operator. Then $ A\mathcal{S}_{(\eta)}=S_\theta A$ if and only if $A= \tp$ with $\phi \in \bar{\eta}H^\infty$.
	\end{theorem}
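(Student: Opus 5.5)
For the sufficiency direction we compute directly; for the necessity direction we transport the problem from $\eta H^2$ to $H^2$ and then apply the commutant lifting theorem, so that the lifted operator is an analytic Toeplitz operator.

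\emph{Sufficiency.} Suppose $\phi=\bar\eta\psi$ with $\psi\in H^\infty$, and let $h=\eta f\in\eta H^2$ with $f\in H^2$.
\begin{itemize}
\item Since $\eta H^2$ is $S$-invariant, $\mathcal{S}_{(\eta)}(\eta f)=P_{\eta H^2}(z\eta f)=\eta zf$.
\item Hence $\tp\mathcal{S}_{(\eta)}h=\pt(\bar\eta\psi\,\eta zf)=\pt(z\psi f)$.
\item On the other side, $\tp h=\pt(\psi f)$, so $S_\theta\tp h=\pt S\pt(\psi f)$.
\item Since $\1$ is $S$-invariant, $\pt S(I-\pt)=0$ on $H^2$. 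Therefore $\pt S\pt(\psi f)=\pt(z\psi f)$.
\end{itemize}
The two sides agree, so $\tp\mathcal{S}_{(\eta)}=S_\theta\tp$.

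\emph{Necessity.} Let $U:H^2\to\eta H^2$, $Uf=\eta f$. This map is unitary, and $US=\mathcal{S}_{(\eta)}U$ by the computation above. Put $B=AU:H^2\to\2$. The hypothesis $A\mathcal{S}_{(\eta)}=S_\theta A$ becomes
$$BS=S_\theta B.$$
We now apply the commutant lifting theorem (Sz.-Nagy--Foia\c{s}, in its two-operator form):
\begin{itemize}
\item $S$ on $H^2$ is already an isometry, so it is its own minimal isometric dilation.
\item $S_\theta$ has minimal isometric dilation $S$ on $H^2$. Indeed, $\2$ is $S^*$-invariant and $S_\theta^*=S^*|_{\2}$, so $\pt S^n|_{\2}=S_\theta^n$. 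Minimality holds because $\bigvee_n S^n\2=H^2$ whenever $\theta$ is nonconstant; the case of constant $\theta$ is trivial, since then $\2=\{0\}$ and $A=0=\mathcal{T}_0$.
\end{itemize}
Commutant lifting then gives an operator $\widetilde B:H^2\to H^2$ with
$$S\widetilde B=\widetilde BS,\qquad \pt\widetilde B=B,\qquad \|\widetilde B\|=\|B\|.$$
The commutant of $S$ consists exactly of the analytic Toeplitz operators, so $\widetilde B=T_\psi$ for some $\psi\in H^\infty$. Consequently, for every $f\in H^2$,
$$A(\eta f)=Bf=\pt(\psi f)=\pt\big((\bar\eta\psi)(\eta f)\big).$$
This says $A=\tp$ with $\phi=\bar\eta\psi\in\bar\eta H^\infty$.

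The main obstacle is verifying the hypotheses of commutant lifting precisely. One must check that $\2$ is co-invariant for $S$, so that $\pt S^n|_{\2}=S_\theta^n$. One must also confirm that the version of the theorem for intertwiners between two different contractions applies when the source operator is the isometry $S$ itself. The remaining steps are routine computations.
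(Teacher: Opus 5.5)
Your proof is correct and is essentially the paper's second proof: sufficiency is the same direct computation, and necessity uses the intertwining lifting theorem with $S$ as the minimal isometric dilation of $S_\theta$, so that the lift commutes with $S$ and is therefore an analytic Toeplitz operator. The differences are cosmetic: you transport to $H^2$ through the unitary $U$ before lifting, whereas the paper lifts $A$ directly (treating $\mathcal{S}_{(\eta)}$ as its own dilation) and composes with $T_\eta$ afterwards; you also explicitly handle the constant-$\theta$ case needed for minimality, which the paper leaves implicit.
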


	Turning to the truncated operators, Bessonov in \cite{B1} (2015) introduced another notion of truncated Hankel operators (densely defined) and defined them as: 
	$$ \Gamma_\phi: \2 \rightarrow \overline{z \2} ~\text{ by }~ \Gamma_\phi(f) = P_{\bar{\theta}}(\phi f), f\in \2 \cap L^\infty,$$
	where $P_{\bar{\theta}}: L^2 \rightarrow \overline{z \2}$ is the orthogonal projection. This operator has been termed the little truncated Hankel operator by Ma-Yan-Zheng in \cite{MYZ} (2018) while studying the compactness of the big truncated Hankel operators (BTHO), defined as follows: 
	$$ H_\phi^\theta: K_\theta \rightarrow (\2)^\perp ~\text{ by }~ H_\phi^\theta (f) = ( I-\pt ) (\phi f), f\in \2 \cap L^\infty.$$
	Motivated by these structural considerations and the developments, we introduce the following two operators.
	
	\begin{definition}\label{Def}
		The small truncated Toeplitz operator (STTO) on the model space $\2,$ is defined as $$ t_\phi^\theta:\2 \rightarrow (\overline{z\2})^\perp \left( = \bar{\theta} \ho \oplus H^2 \right) \text{ by } t_\phi^\theta(f)= (I-P_{\bar{\theta}})(\phi f), \phi \in \li,$$ and the big truncated Toeplitz operator (BTTO) as 
		$$ T_\phi^\theta: \2 \rightarrow (\2)^\perp \left( = \ho \oplus \theta H^2 \right) \text{ by } T_\phi^\theta(f) = (I-\pt)\f (\phi f), \phi \in \li.$$
	\end{definition}
	As in the case of the BTHO, the STTO $t_\phi^\theta$ and BTTO $T_\phi^\theta$ are bounded only when the associated symbol $\phi$ is essentially bounded.
	
	The following table summarizes the definitions of truncated operators discussed so far.
	\vspace{0.1in} 
	
	\setlength{\arrayrulewidth}{0.1mm}
	\setlength{\tabcolsep}{5pt}
	\renewcommand{\arraystretch}{1.3}
	
	\begin{tabular}{|p{6cm}|p{7cm}|}
		\hline
		\multicolumn{2}{|c|}{\textbf{Operators Summary}} \\
		\hline
		\textbf{Main Operator} & \textbf{Compression/Restriction} \\
		\hline
		$M_\phi : L^2 \to L^2$ & $T_\phi : H^2 \to H^2,\; T_\phi = P M_\phi |_{H^2},$ \cite{BH} \\
		\hline
		$T_\phi : H^2 \to H^2,\; T_\phi = P M_\phi |_{H^2}$  & 
		$A_\phi^\theta : \2 \to \2;\; A_\phi^\theta = P_\theta M_\phi |_{\2},$ \cite{DES} \\
		\hline
		$T_\phi: H^2 \to (\overline{zH^2})^\perp;\;$ \newline$ T_\phi= (I - Q) M_\phi |_{H^2}$& $ t_\phi^\theta: \2 \rightarrow(\overline{z\2})^\perp; t_\phi^\theta= (I-P_{\bar{\theta}})M_\phi|_{\2} $, \newline STTO in \eqref{Def}. \\
		\hline
		$\ch_\phi : H^2 \to \overline{zH^2};\; \ch = Q M_\phi |_{H^2}$ & 
		$\Gamma_\phi^\theta : \2 \to \overline{z\2};\; \Gamma_\phi^\theta = P_{\bar{\theta}} M_\phi |_{\2},$ \cite{B2} \\
		\hline
		$\ch_\phi : H^2 \to (H^2)^\perp;\;$ \newline$ \ch_\phi= (I - P) M_\phi |_{H^2}$ & 
		$H_\phi^\theta : \2 \to (\2)^\perp;\; \newline H_\phi^\theta = (I - P_\theta) M_\phi |_{\2},$ \cite{MYZ} \\
		\hline
		$\f M_\phi : L^2 \to L^2$ & 
		$H_\phi : H^2 \to H^2;\; H_\phi = P \f M_\phi|_{H^2}$ \\
		\hline
		$H_\phi : H^2 \to H^2;\; H_\phi = P \f{M}_\phi |_{H^2}$ & 
		$B_\phi^\theta : \2 \to \2;\; B_\phi^\theta = P_\theta \f{M}_\phi |_{\2}$ \cite{GM} \\
		\hline
		$ \mathbb{T}_\phi: H^2 \rightarrow (H^2)^\perp;$\newline$ \mathbb{T}_\phi= (I-P)\f M_\phi|_{H^2} $ \newline acts like a Toeplitz operator \cite{CJ} & $ T_\phi^\theta:\2 \rightarrow(\2)^\perp;$\newline$ T_\phi^\theta= (I-\pt)\f M_\phi|_{\2}$, \newline BTTO in \eqref{Def}.  \\
		\hline
		
	\end{tabular}
	\vspace{0.1in}
	
	Note that in the above table, $P$ is replaced by $\pt$ whereas $Q$ is replaced by $P_{\bar{\theta}}$, and $H^2$ by $\2$ appropriately. The compressions and the restrictions are taken accordingly.
	
	\vspace{0.1in}
	
	The article is organized in the following manner. The study of the compactness of all kinds (zeroness, finiteness of rank, and compactness) of the restricted Toeplitz and Hankel operators is obtained in Section 2 (see Theorems~\ref{2t1}, \ref{2t2}, \ref{2t3}). Section 3 presents characterizations of RTO and RHO in terms of operator identities. Section 4 applies the restricted framework to small and big truncated Toeplitz operators. Section 5 contains special and concluding remarks.

	\section{Compactness of RTOs and RHOs}
	
	In this section, we provide the necessary and sufficient conditions for the restricted Toeplitz (RTO) and the restricted Hankel operators (RHO) to be zero, finite rank, and compact. We used the remarkable results obtained by Axler-Chang-Sarason in 1978 \cite{ACS}. Before proceeding further, let us start with the relation between the classical Toeplitz and Hankel operators.
	
	For symbols $\phi,\psi \in \li,$ the Toeplitz operator $T_\phi$ and the Hankel operator $\ch_\psi,$ satisfying the following relation
	\begin{equation}\label{relation1}
		T_{\phi\psi} - T_\phi T_\psi = \ch_{\bar{\phi}}^* \ch_{\psi}.
	\end{equation}
	Since, $\f P\f = Q$, we have $H_{\phi} = \mathcal{J}\, \widehat{H}_{\phi}$, therefore the above equation \eqref{relation1} can be expressed as
	\begin{equation}\label{relation2}
		T_{\phi\psi} - T_\phi T_\psi = \ch_{\bar{\phi}}^* \ch_{\psi} = \ch_{\bar{\phi}}^* \f \f \ch_{\psi}= (\f \ch_{\bar{\phi}})^*H_\psi= H_{\bar{\phi}}^*H_\psi = H_{\breve{\phi}}H_\psi,
	\end{equation} 
	where $\breve{f}(\xi)=f(\bar{\xi})$ for $f$ being in $L^2$.
	
	Now, it is well known that the orthogonal projection $\pt$ can be expressed as $$\pt=P-M_\theta P M_{\bar{\theta}}.$$ So, restricting $\pt$ over $H^2$ yields $ \pt|_{H^2}= I- T_\theta T_{\bar{\theta}}$ (as done in \cite{MYZ}, also see \cite{CL}). Thus, 
	\begin{equation}\label{2eq}
		\pt|_{H^2}= T_{\theta\bar{\theta}} - T_\theta T_{\bar{\theta}}=H_{\breve{\theta}} H_{\bar{\theta}}=\ch_{\bar{\theta}}^*\ch_{\bar{\theta}} \quad \text{ (using\eqref{relation1} and \eqref{relation2}) }.
	\end{equation}

	The following lemma expresses the RTO as a multiplication of two Hankel operators.
	
	\begin{lemma}\label{2l1}
		Let $\eta,\theta$ be two inner functions and consider the associated restricted Toeplitz operator (RTO) $\tp:\eta H^2 \rightarrow\2$ corresponding to $\phi\in\li$. Then, the RTO $\tp$ can be identified as the action of $H_{\breve{\theta}}H_{\phi\eta\bar{\theta}}$ on $H^2.$ 
	\end{lemma}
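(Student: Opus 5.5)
Every $h\in\eta H^2$ can be written uniquely as $h=\eta f$ with $f\in H^2$, and the map $U:H^2\to\eta H^2$, $Uf=\eta f$, is unitary. So the plan is to prove the identity
\[
\tp(\eta f)=H_{\breve{\theta}}H_{\phi\eta\bar{\theta}}\,f \qquad (f\in H^2),
\]
that is, $\tp U = H_{\breve{\theta}}H_{\phi\eta\bar\theta}$. This is the precise meaning of ``the action on $H^2$''.

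\textbf{Step 1 (the model-space projection).} For $g\in L^2$, use $\pt=P-M_\theta PM_{\bar\theta}$ and rewrite it as
\[
\pt g = M_\theta\big(M_{\bar\theta}P - PM_{\bar\theta}\big)g = \theta\,(I-P)(\bar\theta Pg)-\theta\,P(\bar\theta (I-P)g).
\]
Because $\bar\theta\,\overline{H_0^2}\subset\overline{H_0^2}$, the last term vanishes. Hence
\[
\pt g=\theta\,Q(\bar\theta\, Pg)=\theta\,\ch_{\bar\theta}(Pg),
\]
which extends \eqref{2eq} from $H^2$ to all of $L^2$. Apply this with $g=\phi\eta f$. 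Since $\bar\theta Q=Q\bar\theta Q$, we get
\[
Q(\bar\theta Pg)=Q(\bar\theta g)-Q(\bar\theta Qg)=Q(\bar\theta g)-\bar\theta Qg .
\]
Therefore $\pt g=\theta Q(\bar\theta g)-Qg$. This splits into the ``$H^2$ part'' $P(\theta Q(\bar\theta g))$ and a remainder that must cancel, which is consistent because $\pt g\in H^2$. Conclusion of Step 1:
\[
\pt g = P\big(\theta\, Q(\bar\theta g)\big)=P\big(\theta\,\ch_{\bar\theta\phi\eta}f\big).
\]

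\textbf{Step 2 (recognising Hankel operators).} Write $u=\ch_{\phi\eta\bar\theta}f\in\overline{H_0^2}$. Then
\[
P(\theta u)=\ch_{\bar\theta}^*u .
\]
This is the standard fact that the adjoint of $\ch_\psi$ is $P M_{\bar\psi}$ restricted to $\overline{H_0^2}$. Using $\f P\f=Q$ and $H_\psi=\f\ch_\psi$, as in \eqref{relation2}, we obtain
\[
\ch_{\bar\theta}^*\ch_{\phi\eta\bar\theta}=(\f\ch_{\bar\theta})^*\f\ch_{\phi\eta\bar\theta}=H_{\bar\theta}^*H_{\phi\eta\bar\theta}=H_{\breve\theta}H_{\phi\eta\bar\theta}.
\]
The last equality uses $H_\psi^*=H_{\breve{\bar\psi}}$, which is the identity already used at the end of \eqref{relation2}.

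\textbf{Main obstacle.} The delicate point is Step 1. There we must justify that $\pt(\phi\eta f)$ depends only on $Q(\bar\theta\phi\eta f)$, i.e.\ that the $H^2$-part of $\bar\theta\phi\eta f$ is annihilated. I will verify this directly: if $\bar\theta\phi\eta f=a+b$ with $a\in H^2$ and $b\in\overline{H_0^2}$, then $\pt(\theta a)=0$ because $\theta a\in\theta H^2$. Hence
\[
\pt(\phi\eta f)=\pt(\theta b)=P(\theta b),
\]
since $\theta b\perp\theta H^2$. This gives the cleanest route and bypasses the algebraic manipulation above.
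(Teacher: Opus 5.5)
Your proof is correct and is essentially the paper's argument. The paper writes $\tp(\eta f)=T_{\theta\bar\theta\phi\eta}f-T_\theta T_{\bar\theta\phi\eta}f$ and cites the semicommutator identity \eqref{relation2}; you instead verify that special case by hand, via $\pt(\phi\eta f)=P\bigl(\theta\,Q(\bar\theta\phi\eta f)\bigr)=\ch_{\bar\theta}^*\ch_{\bar\theta\phi\eta}f$, and then pass to $H_{\breve\theta}H_{\phi\eta\bar\theta}$ exactly as in \eqref{relation2} (the splitting $\bar\theta\phi\eta f=a+b$ in your last paragraph already suffices, so the longer manipulation in Step~1 can be dropped).
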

	
	\begin{proof}
		Let $h=\eta f \in \eta H^2$, where $f \in H^2$. By Definition~\ref{def},
		\begin{align*}
			\tp(h) = \pt(\phi h)  & = (P-M_\theta P M_{\bar{\theta}})(\phi h)\\
			& = P(\phi h) - M_\theta P( \bar{\theta} \phi h)\\
			& = P(\phi \eta f) - M_\theta P( \bar{\theta} \phi\eta f)\\
			& = T_{\phi \eta} (f) - T_\theta T_{\bar{\theta} \phi\eta}(f)\\
		\end{align*}
		
		Since $\theta\bar{\theta}=1$, we may write $T_{\phi\eta}(f)=T_{\theta\bar{\theta}\phi\eta}(f),$ therefore
		$$ \tp(h) = \bigl(T_{\theta\bar{\theta}\phi\eta}-T_\theta T_{\bar{\theta}\phi\eta}\bigr)(f).$$
		Now, by \eqref{relation2}, we have $$ T_{\theta\bar{\theta}\phi\eta}-T_\theta T_{\bar{\theta}\phi\eta}
		= H_{\breve{\theta}}H_{\bar{\theta}\phi\eta}.$$
		
		Thus, the operator $\tp$ on $\eta H^2$ is equivalent to the operator $H_{\breve{\theta}}H_{\phi\eta\bar{\theta}}$ on $H^2.$
	\end{proof}
	Therefore, the vanishing, finite rank, and compactness properties of $\tp$ are determined by the consequent properties of $H_{\breve{\theta}}H_{\bar{\theta}\phi\eta}$ on $H^2$, respectively.

	For the study of compactness of the restricted Hankel operator $\hp$, we use the standard fact that a bounded operator $A$ on a Hilbert space is zero, of finite rank, or compact if and only if $A^*A$ has the corresponding property.
	
	\begin{lemma}\label{2l3}
		The restricted Hankel operator $\hp$ is zero, finite rank, and compact if and only if $H_{\bar{\theta}}H_{\phi\eta}$ on $H^2$ is zero, finite rank, and compact.
	\end{lemma}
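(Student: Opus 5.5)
The plan is to transport $\hp$ to an operator on $H^2$ through the unitary $U:H^2\to\eta H^2$, $Uf=\eta f$. Then I would factor the transported operator using the identity \eqref{2eq} for $\pt|_{H^2}$, and finish with the $A^*A$ criterion quoted just before the statement.

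\emph{Step 1 (reduction to $H^2$).} Since $\2\subset H^2$, we have $\pt=\pt P$ on $L^2$. Hence for $f\in H^2$,
$$\hp(\eta f)=\pt\f(\phi\eta f)=\pt P\f(\phi\eta f)=\pt H_{\phi\eta}f .$$
So $\hp U=\pt H_{\phi\eta}=:A$, viewed as an operator $H^2\to\2$. Because $U$ is unitary, $\hp$ is zero, of finite rank, or compact if and only if $A$ is.

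\emph{Step 2 (computing $A^*A$).} By \eqref{2eq}, $\pt|_{H^2}=\ch_{\bar\theta}^*\ch_{\bar\theta}$. Since $\pt$ is an orthogonal projection, $\pt^*\pt=\pt$, and therefore
$$A^*A=H_{\phi\eta}^*\,\pt\,H_{\phi\eta}=H_{\phi\eta}^*\ch_{\bar\theta}^*\ch_{\bar\theta}H_{\phi\eta}=B^*B,\qquad B:=\ch_{\bar\theta}H_{\phi\eta}.$$
Applying the standard fact twice, $A$ has the relevant property if and only if $A^*A=B^*B$ does, and this holds if and only if $B$ does.

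\emph{Step 3 (removing the flip).} Since $H_{\bar\theta}=\f\ch_{\bar\theta}$ and $\f$ is a unitary on $L^2$, we get $H_{\bar\theta}H_{\phi\eta}=\f B$. Thus $B$ is zero, of finite rank, or compact exactly when $H_{\bar\theta}H_{\phi\eta}$ is. This gives the lemma.

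I do not expect a serious obstacle here. The only points needing care are these: the identity $\pt=\pt P$, so that the flip $\f$ can be moved past $P$ to produce the classical Hankel operator $H_{\phi\eta}$; the correct use of \eqref{2eq} with its two Hankel conventions ($\ch$ mapping into $\ho$ versus $H$ mapping into $H^2$); and the observation that the $A^*A$ criterion applies to operators between different Hilbert spaces ($H^2\to\2$, $H^2\to\ho$), which it does verbatim.
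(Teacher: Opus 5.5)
Your proposal is correct and follows the paper's own proof essentially step for step. You rewrite $\hp$ as $\pt H_{\phi\eta}$ on $H^2$, factor $\pt|_{H^2}$ via \eqref{2eq}, and identify $A^*A$ with $(H_{\bar\theta}H_{\phi\eta})^*(H_{\bar\theta}H_{\phi\eta})$. The differences are only cosmetic: you use the $\ch_{\bar\theta}^*\ch_{\bar\theta}$ form and remove the flip $\f$ at the end, whereas the paper writes $\pt|_{H^2}=H_{\bar\theta}^*H_{\bar\theta}$ directly, and you state explicitly the unitary $f\mapsto\eta f$ that the paper uses only implicitly.
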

	
	\begin{proof}
		Let $h=\eta f \in \eta H^2$, where $f \in H^2$. By Definition~\ref{def}, $$ \hp(h)=\pt\f(\phi h)= \pt P \f(\phi \eta f) = \pt H_{\phi\eta}(f) .$$ Thus, $\hp$ on $\eta H^2$ is represented as $ \pt H_{\phi\eta}$ on $H^2$. Now,
		\begin{align*}
			\hp^*\hp & = (\pt H_{\phi\eta})^* (\pt H_{\phi\eta})\\
			& = H_{\phi\eta}^* \pt H_{\phi\eta} \quad \text{(since $P_\theta^*=P_\theta$ and $P_\theta^2=P_\theta$)}\\
			& = H_{\phi\eta}^* H_{\breve{\theta}}H_{\bar{\theta}}H_{\phi\eta} \quad \text{(by}~\eqref{2eq})\\
			& = H_{\phi\eta}^* H_{\bar{\theta}}^*H_{\bar{\theta}}H_{\phi\eta}\\
			& = (H_{\bar{\theta}}H_{\phi\eta})^*(H_{\bar{\theta}}H_{\phi\eta}).
		\end{align*}
		Hence, we conclude that the study of the compactness of $\hp$ on $\eta H^2$ is equivalent to that of $H_{\bar{\theta}}H_{\phi\eta}$ on $H^2.$
	\end{proof}
	
	By Brown-Halmos (\cite{BH}, Theorem 8), the product of two Toeplitz operators $T_\phi$ and $ T_\psi$ is itself a Toeplitz operator if and only if either $\phi$ is co-analytic (that is, $\bar{\phi}\in H^\infty$) or $\psi$ is analytic (that is, $\psi\in H^\infty$), and in either case, $$T_\phi T_\psi= T_{\phi\psi}.$$ Consequently, it follows from equation \eqref{relation2} that $$H_{\breve{\phi}}H_\psi=0$$ if and only if either $H_{\breve{\phi}}=0$ or $H_\psi=0.$ This observation leads to the following theorem.
	
	\begin{theorem}\label{2t1}
		Let $\phi\in \li$ and $\eta,\theta$ be two inner functions with $\theta $ being non-constant. Then
		\begin{enumerate}[label=(\roman*)]
			\item the restricted Toeplitz operator $\tp$ is zero if and only if $\phi \in \bar{\eta}\theta H^\infty$,
			\vspace{0.1in}
			
			\item likewise, the restricted Hankel operator $\hp$ is zero if and only if $\phi \in \bar{\eta} H^\infty$.
		\end{enumerate}
	\end{theorem}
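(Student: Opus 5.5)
For both parts I will use Lemmas~\ref{2l1} and \ref{2l3} to turn the vanishing of $\tp$ and $\hp$ into the vanishing of a product of two classical Hankel operators. I will then apply the consequence of Brown--Halmos recorded just before the theorem: $H_{\breve{\phi}}H_\psi=0$ if and only if $H_{\breve{\phi}}=0$ or $H_\psi=0$. Finally, I will read off the symbol condition using the standard fact that $H_\psi=P\f M_\psi|_{H^2}=0$ exactly when $\psi\in H^\infty$. The last fact holds because $\f$ maps $\overline{H^2_0}$ onto $H^2$, so $H_\psi=0$ iff $\psi f\in H^2$ for all $f\in H^2$; taking $f=1$ gives $\psi \in H^2\cap\li=H^\infty$.

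For (i), Lemma~\ref{2l1} shows that, under the unitary $f\mapsto \eta f$ from $H^2$ onto $\eta H^2$, $\tp$ corresponds to $H_{\breve{\theta}}H_{\phi\eta\bar{\theta}}$. Hence $\tp=0$ iff $H_{\breve{\theta}}H_{\phi\eta\bar\theta}=0$, which holds iff $H_{\breve{\theta}}=0$ or $H_{\phi\eta\bar{\theta}}=0$. The first alternative means $\breve{\breve{\theta}}=\theta$ plays the role of $\phi$ in the Brown--Halmos condition, i.e.\ $T_\theta T_{\bar\theta\phi\eta}=T_{\phi\eta}$ with $\theta$ required to be co-analytic. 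To avoid confusion I will instead argue directly that $H_{\breve\theta}=0$ iff $\breve\theta\in H^\infty$. Since $\breve\theta(\xi)=\theta(\bar\xi)$ has Fourier coefficients $\widehat{\theta}(-n)$, this means $\theta$ is co-analytic. Being analytic as well, $\theta$ would then be constant, contradicting the hypothesis. So $\tp=0$ iff $\phi\eta\bar\theta\in H^\infty$, i.e.\ $\phi\in\bar\eta\theta H^\infty$, using $|\eta|=|\theta|=1$ a.e. The sufficiency direction can also be checked directly: if $\phi=\bar\eta\theta g$ with $g\in H^\infty$, then $\phi\eta f=\theta gf\in\theta H^2$, which $\pt$ annihilates.

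For (ii), Lemma~\ref{2l3} gives $\hp=0$ iff $H_{\bar\theta}H_{\phi\eta}=0$. I will write $H_{\bar\theta}=H_{\breve{\chi}}$ with $\chi=\breve{\bar\theta}$ and apply the dichotomy. This gives $H_{\bar\theta}=0$ or $H_{\phi\eta}=0$. Now $H_{\bar\theta}=0$ iff $\bar\theta\in H^\infty$, which again forces $\theta$ to be constant, so it is excluded. Hence $\hp=0$ iff $\phi\eta\in H^\infty$, i.e.\ $\phi\in\bar\eta H^\infty$. As a sanity check, if $\phi\eta\in H^\infty$ then $\f(\phi\eta f)\in\overline{H^2_0}$, so $P\f(\phi\eta f)=0$ and hence $\pt\f(\phi h)=0$.

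The main obstacle is bookkeeping. I must make sure the Brown--Halmos dichotomy is applied with the correct identification of symbols (the $\breve{\ }$ and conjugation conventions in \eqref{relation2}), and that the excluded alternative genuinely reduces to "$\theta$ constant". This is exactly where the non-constancy hypothesis enters. The remaining steps, the unitary identification $\eta H^2\cong H^2$ and the use of $|\eta|=1$ to divide, are routine.
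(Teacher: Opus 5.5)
Your proposal is correct and follows the paper's route. You reduce via Lemmas~\ref{2l1} and \ref{2l3} to the vanishing of $H_{\breve\theta}H_{\phi\eta\bar\theta}$ and $H_{\bar\theta}H_{\phi\eta}$, then apply the Brown--Halmos dichotomy; you also make explicit a step the paper leaves implicit, namely that $H_{\breve\theta}=0$ or $H_{\bar\theta}=0$ would force $\theta$ to be constant, which is exactly where the non-constancy hypothesis enters.
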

	
	\begin{proof}
		
		By Lemmas~\ref{2l1} and \ref{2l3}, the operators $\tp$ and $\hp$ are zero if and only if $$ H_{\breve{\theta}}H_{\phi\eta\bar{\theta}}=0 \quad \text{and} \quad H_{\bar{\theta}}H_{\phi\eta}=0,$$ respectively.
		Now, by the preceding discussion, $$ H_{\breve{\theta}}H_{\phi\eta\bar{\theta}}=0 \iff {\phi\eta\bar{\theta}}\in H^\infty \iff \phi \in \bar{\eta} \theta H^\infty.$$
		
		Equivalently, $$ H_{\bar{\theta}}H_{\phi\eta}=0 \iff H_{\phi\eta}=0 \iff  \phi \in \bar{\eta} H^\infty. $$ This completes the proof. 
	\end{proof}
	Note that, if $\theta$ is a uni-modular constant, then $\2=\{0\}$, therefore any operator having range in $\2$ is trivially zero. 
	\vspace{0.1in}
	
	To characterize finite-rank restricted Toeplitz and Hankel operators, we invoke the following classic results: the Kronecker theorem and the Axler-Chang-Sarason theorem.
	
	\begin{lemma}(Kronecker theorem \cite{MR, NKN, VVP}): \label{2l4}
		For $\phi \in \li,$ the Hankel operator $H_\phi$ (equivalently $\ch_\phi$) is of finite rank if and only if $$ \phi \in H^\infty + \mathcal{R},$$ where $\mathcal{R}$ is the set of rational functions, $\frac{p(z)}{q(z)}$ with zeros of $q$ lie inside $\mathbb{D}$.
	\end{lemma}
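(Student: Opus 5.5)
Kronecker's theorem is classical, and the paper quotes it rather than proving it. Here is the plan I would follow. First reduce to the form $\widehat{H}_\phi$, since $H_\phi=\mathcal{J}\widehat{H}_\phi$ and $\mathcal{J}$ is unitary, so the two operators have the same rank. Then pass to the matrix picture. With respect to the bases $\{z^n\}_{n\ge0}$ of $H^2$ and $\{\bar z^{m}\}_{m\ge1}$ of $\overline{H_0^2}$, $\widehat{H}_\phi$ has the Hankel matrix $\bigl(\hat\phi(-j-k-1)\bigr)_{j,k\ge0}$. This matrix depends only on the coanalytic part $P_-\phi=\sum_{n\ge1}\hat\phi(-n)\bar z^n$. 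Hence $\widehat{H}_\phi=\widehat{H}_{\psi}$ whenever $\phi-\psi\in H^\infty$, and the statement becomes a statement about the sequence $c_n=\hat\phi(-n-1)$.

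For sufficiency, suppose $\phi=g+p/q$ with $g\in H^\infty$ and $q$ having zeros $a_1,\dots,a_k$ in $\mathbb{D}$. By partial fractions, modulo polynomials (which lie in $H^\infty$), $\phi$ is a finite combination of terms $(z-a)^{-j}$. For such a term, the identity $\widehat{H}_{\psi}(zf)=$ (a finite-dimensional correction) together with $\widehat{H}_{(z-a)^{j}\psi}=0$ shows the range lies in the span of $\{Q(z-a)^{-i}:1\le i\le j\}$. More concretely, $Q\bigl(f/(z-a)^j\bigr)$ equals the principal part of $f/(z-a)^j$ at $a$, which lies in a $j$-dimensional space. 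So the rank is at most $\deg q$.

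For necessity, which I expect to be the main obstacle, I would use the shift-invariance of the kernel. From $\widehat{H}_\phi S=M_z|\cdot$ followed by compression, i.e. $\widehat{H}_\phi(zf)=Q(z\,\widehat{H}_\phi f)$, the kernel $\ker\widehat{H}_\phi$ is $S$-invariant and nonzero when the rank is finite. It also has finite codimension. By Beurling's theorem, $\ker\widehat{H}_\phi=\theta H^2$. Finite codimension forces $K_\theta$ to be finite dimensional, so $\theta$ is a finite Blaschke product $B$. Then $\widehat{H}_\phi(B)=0$, i.e. $\phi B\in H^2$, and since $\phi B\in L^\infty$ we get $\phi B\in H^\infty$. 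Therefore $\phi=(\phi B)\bar B$ on $\mathbb{T}$, with $\bar B=1/B$ rational with poles at the zeros of $B$ inside $\mathbb{D}$. The remaining care is to write $h/B$ with $h\in H^\infty$ as an element of $H^\infty+\mathcal{R}$. I would do this by subtracting from $h$ the Hermite interpolating polynomial $r$ matching $h$ at the zeros of $B$ with multiplicity. Then $(h-r)/B\in H^\infty$ and $r/B\in\mathcal{R}$, which gives $\phi\in H^\infty+\mathcal{R}$.

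Alternatively, for necessity, the finite-rank Hankel matrix gives a linear recurrence for $(c_n)$, by Kronecker's original argument. That in turn says $\sum c_n w^n$ is rational, with poles outside the closed disk by square-summability. Substituting $w=\bar z$ shows $P_-\phi$ is rational with poles in $\mathbb{D}$. I would keep the Beurling route as primary, since it fits the paper's framework.
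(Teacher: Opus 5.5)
Your proof is correct. The paper does not prove this lemma; it only quotes it from the cited monographs. Your primary route is the standard textbook proof, and it fits the paper's Beurling-subspace framework naturally:
\begin{itemize}
\item the kernel of $\widehat{H}_\phi$ is $S$-invariant, because $\widehat{H}_\phi(zf)=Q(z\,\widehat{H}_\phi f)$, so by Beurling it equals $\theta H^2$;
\item finite codimension makes $K_\theta$ finite dimensional, so $\theta$ is a finite Blaschke product $B$;
\item then $h=\phi B\in H^\infty$, and a Hermite interpolating polynomial $r$ gives $\phi=(h-r)/B+r/B\in H^\infty+\mathcal{R}$.
\end{itemize}

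Two steps deserve an extra line when you write this out.

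First, justify that $\dim K_\theta<\infty$ forces $\theta$ to be a finite Blaschke product. If $\theta$ has infinitely many zeros $a_n$, the reproducing kernels $k_{a_n}$ are linearly independent and lie in $K_\theta$. If $\theta$ has a nonconstant singular factor $s=s_1^n$, then $K_\theta\supseteq K_s=K_{s_1}\oplus s_1K_{s_1}\oplus\cdots\oplus s_1^{n-1}K_{s_1}$, which has dimension at least $n$ for every $n$.

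Second, in the sufficiency part, drop the vague remark about a ``finite-dimensional correction''. The principal-part computation suffices on its own. It uses two facts: $(z-a)^{-i}=\bar z^{\,i}(1-a\bar z)^{-i}\in\overline{H_0^2}$ for $|a|<1$, and $f/(z-a)^j$ minus its principal part at $a$ lies in $H^2$.

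As a by-product, your necessity argument gives $\operatorname{rank}\widehat{H}_\phi=\dim K_B=\deg B$, which complements the bound $\operatorname{rank}\le\deg q$ from the sufficiency part.
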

	
	\begin{lemma}(Axler-Chang-Sarason \cite{ACS}): \label{2l5}
		For $\phi,\psi \in \li$, the operator $ \ch_\phi^*\ch_\psi$  is of finite rank if and only if either $\ch_\phi$ or $\ch_\psi$ is of finite rank. 
		
		Equivalently, $ H_{{\phi}} H_\psi$ is of finite rank if and only if $H_{{\phi}}$ or $H_\psi$ is of finite rank.
	\end{lemma}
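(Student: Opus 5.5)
This is a classical result that the paper cites from \cite{ACS}. The plan is a Beurling-type invariant-subspace argument that reduces finite rank of the product to a statement about a model space meeting a Beurling subspace. The ``if'' direction is immediate: if either $\ch_\phi$ or $\ch_\psi$ has finite rank, so does $\ch_\phi^*\ch_\psi$.

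For the second formulation, \eqref{relation2} gives $H_{\breve{\phi}}H_\psi=\ch_{\bar{\phi}}^*\ch_\psi$. Also $H_{\breve{\phi}}=(\f\ch_{\bar\phi})^*$ has the same rank as $\ch_{\bar\phi}$. Replacing $\phi$ by $\breve{\bar\phi}$ as needed, the two formulations are therefore equivalent, and only the ``only if'' part of the first one needs proof.

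For ``only if'', assume $K=\ch_\phi^*\ch_\psi$ has finite rank and that $\ch_\psi$ has infinite rank. Let $V$ denote the compression of $M_z$ to $\ho$. Then $\ch_\psi S=V\ch_\psi$ and $\ch_\phi S=V\ch_\phi$. Hence $N:=\overline{\operatorname{ran}}\,\ch_\psi$ is $V$-invariant, and $\ker\ch_\phi^*=\ho\ominus\overline{\operatorname{ran}}\,\ch_\phi$ is $V^*$-invariant.

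Conjugating by the flip $\f$ turns $V$ into $S^*$ on $H^2$. By Beurling's theorem, $\f N=K_v$ for some inner $v$, with $K_v$ infinite dimensional, and $\f\ker\ch_\phi^*=uH^2$ for some inner $u$ (or $\{0\}$).

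Since $K$ has finite rank, $\ch_\phi^*|_N$ has finite rank, because $\ch_\psi$ has dense range in $N$. So $N\cap\ker\ch_\phi^*$ has finite codimension in $N$. After flipping, the goal becomes:
\[
\text{if } K_v\cap uH^2 \text{ has finite codimension in the infinite-dimensional } K_v, \text{ then } \dim K_u<\infty .
\]
Note that $\dim K_u<\infty$ is exactly finite rank of $\ch_\phi$. The case $uH^2=\{0\}$ is impossible here, since then the intersection is $\{0\}$, which has infinite codimension.

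To prove the displayed claim, first describe $K_v\cap uH^2$. Write $f=ug\in K_v$. Then $\bar v u g\in\ho$, and I would show that $g\in K_{v\bar u}$ when $u$ divides $v$. In general the intersection should equal $uK_{w}$, where $w=v/\gcd$-type quotient. Then compare codimensions: $K_v=K_u\oplus uK_{v\bar u}$ when $u\mid v$, so the codimension of $uK_{v\bar u}$ is $\dim K_u$, which must then be finite.

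The main obstacle is the non-divisible case, where $u\nmid v$. There I would first try to use that a finite-codimensional subspace of $K_v$ contains $S_v^{*n}$-images of arbitrary vectors of $K_v$ modulo a finite-dimensional space. Then $u$ must divide $vB$ for some finite Blaschke product $B$, which forces $u=u_1B'$ with $u_1\mid v$ and $B'$ a finite Blaschke factor. One then reduces to the divisible case, and $\dim K_u\le\dim K_{u_1}+\deg B'<\infty$.

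If this direct approach becomes messy, the fallback is the original Axler--Chang--Sarason route. Note that $K-S^*KS=\ch_\phi^*\bar z\otimes\ch_\psi^*\bar z$ has rank one. Combine this with finite rank of $K$ to produce a nonzero $H^\infty$ multiplier $q$ with $\ch_{q\phi}$ and $\ch_{q\psi}$ controlled, and then apply Kronecker's theorem (Lemma~\ref{2l4}) to conclude that one of $\phi,\psi$ lies in $H^\infty+\mathcal{R}$.
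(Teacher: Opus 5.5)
The paper gives no proof of this lemma; it is quoted from \cite{ACS}, so your argument has to stand on its own. Several parts of it are sound: the ``if'' direction, the equivalence of the two formulations, and the reduction. The intertwinings $\ch_\psi S=V\ch_\psi$ and $\ch_\phi S=V\ch_\phi$, the identity $\f V\f=S^*$, Beurling's theorem, and the finite codimension of $N\cap\ker\ch_\phi^*$ in $N$ together reduce the ``only if'' part to your displayed claim. You should also allow $\f N=H^2$, but that case is trivial.

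The claim, however, is not a routine step. Since $K_v\cap uH^2=\ker\bigl(P_{K_u}|_{K_v}\bigr)$, it says exactly that $P_{K_u}P_{K_v}$ can have finite rank only if $K_u$ or $K_v$ is finite dimensional. All of the difficulty sits there, and you prove it only when $u\mid v$, where it is immediate.

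\textbf{Where the argument fails.} The non-divisible case is a genuine gap, for three reasons.
\begin{enumerate}[label=(\roman*)]
\item Your proposed description $K_v\cap uH^2=uK_w$ is false in general. Take $v=z^2$ and $u=b_a$ with $a\neq 0$. Then $K_v\cap uH^2=\mathbb{C}(z-a)=b_a\,\mathbb{C}(1-\bar a z)$, and $\mathbb{C}(1-\bar a z)$ is not a model space. In general the intersection is $u\ker T_{u\bar v}$, which is a Toeplitz kernel.
\item The key assertion that $u$ divides $vB$ for some finite Blaschke product $B$ is never derived, and the device you suggest does not give it. A nontrivial combination of $f,S^*f,\dots,S^{*k}f$ lying in $uH^2$ only yields $qf-r\in uH^2$, with polynomials $q\neq0$ and $r$. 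The remainder $r$ comes from Taylor coefficients of $f$ and is generally nonzero, so no divisibility follows.
\item The fallback paragraph names a rank-one identity, which is correct, but gives no mechanism producing the multiplier $q$.
\end{enumerate}

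\textbf{How to close the gap.} Use reproducing kernels on a level set of $v$ instead of $S^{*n}$-images. Let $k$ be the codimension of $K_v\cap uH^2$ in $K_v$. Since $K_v$ is infinite dimensional, Frostman's theorem together with the Crofoot transform gives some $c\in\mathbb{D}$ and $k+1$ distinct points $\lambda_0,\dots,\lambda_k$ with $v(\lambda_j)=c$. If $v$ has infinitely many zeros, $c=0$ already works.

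Then $k^v_{\lambda_j}=(1-\bar c v)/(1-\bar\lambda_j z)\in K_v$, and some nontrivial combination of these lies in $uH^2$. Both $1-\bar c v$ and $\prod_j(1-\bar\lambda_j z)$ are invertible in $H^\infty$. Hence there is a nonzero polynomial $p$ of degree at most $k$ with $p\in uH^2$; it is nonzero because Cauchy kernels at distinct points are linearly independent. Therefore $u$ divides the Blaschke factor of $p$, and $\dim K_u\le k$.
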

	Using the above two results, we obtain the following finite-rank characterization of RTOs and RHOs.
	
	\begin{theorem}\label{2t2}
		Let $\phi\in L^{\infty}$ and $\eta,\theta$ be two inner functions. Then
		
		\begin{enumerate}[label=(\roman*)]
			\item the restricted Toeplitz operator $\tp$ is of finite rank if and only if either $$\phi \in \bar{\eta}\theta (H^\infty+ \mathcal{R})$$ or the inner function $\theta$ is a finite Blaschke product.
			
			\item the restricted Hankel operator $\hp$ is of finite rank if and only if either $$ \phi \in\bar{\eta} (H^\infty+ \mathcal{R})$$ or the inner function $\theta$ is a finite Blaschke product.
		\end{enumerate}
	\end{theorem}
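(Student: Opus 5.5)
The plan is to reduce everything to products of two Hankel operators, exactly as in the proof of Theorem~\ref{2t1}. By Lemma~\ref{2l1}, $\tp$ has finite rank if and only if $H_{\breve{\theta}}H_{\phi\eta\bar{\theta}}$ has finite rank on $H^2$. By Lemma~\ref{2l3}, $\hp$ has finite rank if and only if $H_{\bar{\theta}}H_{\phi\eta}$ has finite rank. In Lemma~\ref{2l1} one should note that $h=\eta f\mapsto f$ is unitary from $\eta H^2$ onto $H^2$, so rank is preserved. The Axler--Chang--Sarason result, Lemma~\ref{2l5}, then says each product has finite rank if and only if one of its two Hankel factors does. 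The rest is a translation of each factor's finite-rank condition via Kronecker's theorem, Lemma~\ref{2l4}.

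For (i) there are two factors to handle.

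\textbf{The factor $H_{\phi\eta\bar{\theta}}$.} By Lemma~\ref{2l4} it has finite rank if and only if $\phi\eta\bar\theta\in H^\infty+\mathcal R$. Since $|\eta|=|\theta|=1$ a.e., this is equivalent to $\phi\in\bar\eta\theta(H^\infty+\mathcal R)$.

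\textbf{The factor $H_{\breve{\theta}}$.} First relate it to $H_{\bar\theta}$ via \eqref{relation2}: $H_{\breve\theta}=H_{\bar\theta}^*$, so the two have the same rank. It is cleaner still to use \eqref{2eq}: $\pt|_{H^2}=H_{\breve\theta}H_{\bar\theta}=H_{\bar\theta}^*H_{\bar\theta}$. Hence $\operatorname{rank}H_{\bar\theta}=\operatorname{rank}\pt=\dim\2$. So $H_{\breve\theta}$ has finite rank if and only if $\dim \2<\infty$, which holds if and only if $\theta$ is a finite Blaschke product. This is the standard fact that $\dim K_\theta$ equals the number of zeros of $\theta$ when $\theta$ is a finite Blaschke product, and is infinite otherwise. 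I would cite that fact or prove it via Kronecker. For the Kronecker route one needs: $\bar\theta\in H^\infty+\mathcal R$ with $\theta$ inner forces $\theta$ to be a finite Blaschke product. To see this, write $\bar\theta=g+r$ with $r$ rational with poles in $\mathbb D$, multiply by the denominator $q$ of $r$, and get $q\bar\theta\in H^\infty$. Then $q\in\theta H^\infty$, so $\theta$ divides a polynomial.

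Part (ii) goes the same way. The first factor $H_{\bar\theta}$ has finite rank if and only if $\theta$ is a finite Blaschke product, by the argument above. The second factor $H_{\phi\eta}$ has finite rank if and only if $\phi\eta\in H^\infty+\mathcal R$, that is, $\phi\in\bar\eta(H^\infty+\mathcal R)$.

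The main obstacle is the step identifying finite rank of $H_{\bar\theta}$ (equivalently $H_{\breve\theta}$) with $\theta$ being a finite Blaschke product. The rest is bookkeeping with Lemmas~\ref{2l1}, \ref{2l3}, \ref{2l4} and \ref{2l5}. The route through \eqref{2eq} and $\dim\2$ is the shortest, since it avoids any division in $H^\infty+\mathcal R$. There is also a degenerate case: if $\theta$ is constant then $\2=\{0\}$ and the operators are trivially zero, and a constant is a finite Blaschke product with no zeros, so this case is consistent with the statement.
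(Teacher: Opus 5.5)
Your proposal is correct and follows the paper's proof. You reduce via Lemmas~\ref{2l1} and \ref{2l3} to the products $H_{\breve{\theta}}H_{\phi\eta\bar{\theta}}$ and $H_{\bar{\theta}}H_{\phi\eta}$, split them with Lemma~\ref{2l5}, and translate each factor using Kronecker's theorem; your observation that $\pt|_{H^2}=H_{\bar\theta}^*H_{\bar\theta}$ gives $\operatorname{rank}H_{\bar\theta}=\dim \2$ is a clean justification of the step ``$H_{\breve\theta}$ has finite rank iff $\theta$ is a finite Blaschke product,'' which the paper simply attributes to Lemma~\ref{2l4}.
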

	\begin{proof}
		$(i)$ By Lemma~\ref{2l1}, the RTO $\tp$ is of finite rank if and only if $$H_{\breve{\theta}}H_{\phi\eta\bar{\theta}} $$ on $H^2$ is of finite rank. By Lemma~\ref{2l5}, this holds if and only if either $H_{\breve{\theta}}$ or $H_{\phi\eta\bar{\theta}}$ is of finite rank. Now by Lemma~\ref{2l4}, $H_{\breve{\theta}}$ is finite rank when $\theta$ is a finite Blaschke product, and conversely. Whereas, $H_{\phi\eta\bar{\theta}}$ is of finite rank if and only if $${\phi\eta\bar{\theta}}\in (H^\infty+ \mathcal{R}),$$ equivalently, $$\phi \in \bar{\eta}\theta (H^\infty+ \mathcal{R}).$$ This proves $(i)$.
		\vspace{0.1in}
		
		For $(ii)$, one can start with Lemma~\ref{2l3}, and proceed similarly as above to get the desired result for RHO $\hp$, so we omit the proof. 
	\end{proof}
	
	To characterize the compactness of RTOs and RHOs, we make extensive use of the theory of Douglas algebras. A \emph{Douglas algebra} is a closed subalgebra of $L^{\infty}$ that contains $H^\infty$. We denote the Gelfand space (the space of nonzero multiplicative linear functionals) of the Douglas algebra $A$ by $\mathcal{M}(A)$.
	
	If $m\in \mathcal{M}(H^{\infty})$, then $m$ can be viewed as a multiplicative linear functional on $H^{\infty}$. Therefore, by the Gleason-Whitney theorem \cite{D}, $ m$ admits a unique positive extension $l_m$ to a bounded linear functional on $L^{\infty}$. Thus, by the Riesz representation theorem, there exists a measure $d\mu_m$, called the representing measure, with support $S_m \subseteq \mathcal{M}(L^{\infty})$, such that 
	$$ l_m(f) = \int_{S_m} f d\mu_m. $$
	For more details, the reader is referred to \cite{KH}. A subset of $\mathcal{M}(L^{\infty})$ is called a support set if it is the (closed) support of the representing measure corresponding to a functional in $\mathcal{M}(H^{\infty}+C)$. For further information on $H^{\infty}$, $L^{\infty}$, and their maximal ideal spaces, see \cite{SC, JBG, DM, DES1}.
	\vspace{0.1in}
	
	A fundamental result due to Axler-Chang-Sarason \cite{ACS}, and later extended by Volberg \cite{V}, is stated below.
	
	\begin{lemma}\label{2l6}
		For the symbols $\phi,\psi \in L^\infty,$ $\ch_{\bar{\phi}}^*\ch_\psi$ is compact if and only if for each support set $S_m$, either $\bar{\phi}|_{S_m}$ or $\psi|_{s_m}$ is in $H^\infty |_{S_m}.$
		\vspace{0.1in}
		
		Equivalently, the product $H_\phi H_\psi$  of two Hankel operators is compact if and only if ${\phi}^*|_{S_m}$ or $\psi|_{s_m}$ is in $H^\infty |_{S_m}.$
	\end{lemma}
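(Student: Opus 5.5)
The first assertion is the Axler--Chang--Sarason--Volberg theorem, and I would not reprove it. I would quote it from \cite{ACS}, where sufficiency of the support-set condition is established, together with \cite{V}, where Volberg proves necessity. For orientation, the sufficiency argument runs as follows. If on each support set $S_m$ one of $\bar{\phi}$, $\psi$ agrees with an $H^\infty$ function, a localization argument over $\mathcal{M}(H^\infty+C)$ combined with Sarason's theorem on $H^\infty+C$ shows that $\ch_{\bar{\phi}}^*\ch_\psi$ lies in the ideal of compact operators. This uses the identity \eqref{relation1}, $\ch_{\bar{\phi}}^*\ch_{\psi}=T_{\phi\psi}-T_\phi T_\psi$, so compactness is equivalent to the semicommutator $T_{\phi\psi}-T_\phi T_\psi$ being compact. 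The converse is Volberg's deep contribution, and it is the only genuinely hard step.

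The content I would actually prove here is the ``equivalently'' clause, which reduces the $H_\phi H_\psi$ formulation to the $\ch$ formulation. First I would express $H_\phi$ as an adjoint. From \eqref{relation2} we have $H_{\bar{g}}^*=H_{\breve{g}}$ for every $g\in\li$. Choosing $g=\breve{\phi}$ gives $\bar g=\overline{\breve{\phi}}=\phi^*$ and $\breve{g}=\phi$, so
$$H_\phi = H_{\phi^*}^* = (\f\ch_{\phi^*})^* = \ch_{\phi^*}^*\f ,$$
using $H_\chi=\f\ch_\chi$ and $\f^*=\f$. Next, since $\f$ is a unitary involution,
$$H_\phi H_\psi = \ch_{\phi^*}^*\f\f\ch_\psi = \ch_{\phi^*}^*\ch_\psi .$$

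Finally I would apply the first assertion to the pair $(\phi',\psi)$ with $\bar{\phi'}=\phi^*$. This shows that $H_\phi H_\psi$ is compact if and only if, for each support set $S_m$, either $\phi^*|_{S_m}$ or $\psi|_{S_m}$ lies in $H^\infty|_{S_m}$, which is exactly the stated condition.

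The only point needing care is the bookkeeping of the involutions $\breve{\ }$, $\bar{\ }$ and ${}^*$. In particular one must check that $\phi^*=\overline{\breve{\phi}}$, so that the conjugate symbol $\bar{\phi'}$ appearing in the first assertion is precisely $\phi^*$. The main obstacle, Volberg's necessity direction, is handled entirely by citation.
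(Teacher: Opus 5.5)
Your proposal is correct and takes the same approach as the paper. The paper also gives no proof of the first assertion and simply cites Axler--Chang--Sarason and Volberg. Its ``equivalently'' clause comes from the identity $\ch_{\bar{\phi}}^*\ch_{\psi}=H_{\breve{\phi}}H_{\psi}$ in \eqref{relation2}, and your computation $H_\phi H_\psi=\ch_{\phi^*}^*\ch_\psi$ with $\phi^*=\overline{\breve{\phi}}$ is exactly the substitution $\phi\mapsto\breve{\phi}$ in that identity.
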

	We use this to obtain a characterization of the compactness of RTOs and RHOs.
	
	\begin{theorem}\label{2t3}
		Let $\phi\in L^{\infty}$. Then we have the following characterizations:
		\begin{enumerate}[label=(\roman*)]
			\item The restricted Toeplitz operator $\tp$ is compact if and only if $$\bar{\theta} |_{S_m}\in H^\infty |_{S_m} \quad\text{ or } \quad \phi \in \bar{\eta}\theta H^\infty |_{S_m}.$$
			\item The restricted Hankel operator $\hp$ is compact if and only if $$\bar{\theta}|_{S_m} \in H^\infty |_{S_m} \quad \text{ or } \quad \phi |_{S_m} \in \bar\eta H^\infty |_{S_m}.$$
		\end{enumerate}
	\end{theorem}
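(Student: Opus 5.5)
The plan is to reduce both parts to Lemma~\ref{2l6}, using the Hankel-product representations already obtained in Lemmas~\ref{2l1} and~\ref{2l3}. The conditions in the theorem are understood, as in Lemma~\ref{2l6}, to hold for each support set $S_m$.

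For part $(i)$, Lemma~\ref{2l1} identifies $\tp$ with $H_{\breve{\theta}}H_{\phi\eta\bar{\theta}}$ acting on $H^2$. This identification is via the unitary map $f\mapsto \eta f$ from $H^2$ onto $\eta H^2$, so compactness is preserved. By the second form of Lemma~\ref{2l6}, $H_{\breve{\theta}}H_{\phi\eta\bar\theta}$ is compact if and only if, for each support set $S_m$, either $(\breve{\theta})^*|_{S_m}\in H^\infty|_{S_m}$ or $\phi\eta\bar\theta|_{S_m}\in H^\infty|_{S_m}$.

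Two small computations then finish the argument. First, $(\breve\theta)^*(\xi)=\overline{\breve\theta(\bar\xi)}=\overline{\theta(\xi)}$, so $(\breve\theta)^*=\bar\theta$ and the first alternative is $\bar\theta|_{S_m}\in H^\infty|_{S_m}$. Second, because $\eta$ and $\theta$ are unimodular on $\mathbb{T}$, and hence on every support set, $\phi\eta\bar\theta|_{S_m}\in H^\infty|_{S_m}$ is equivalent to $\phi|_{S_m}\in \bar\eta\theta H^\infty|_{S_m}$. Concretely, if $\phi\eta\bar\theta=h$ on $S_m$ with $h\in H^\infty$, then $\phi=\bar\eta\theta h$ on $S_m$, and conversely. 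This gives exactly the condition stated in $(i)$.

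For part $(ii)$, Lemma~\ref{2l3} shows that $\hp$ is compact if and only if $H_{\bar\theta}H_{\phi\eta}$ is compact, since $A$ is compact exactly when $A^*A$ is. By Lemma~\ref{2l6}, this holds if and only if, for each $S_m$, either $(\bar\theta)^*|_{S_m}\in H^\infty|_{S_m}$ or $\phi\eta|_{S_m}\in H^\infty|_{S_m}$. The second alternative becomes $\phi|_{S_m}\in\bar\eta H^\infty|_{S_m}$, exactly as in part $(i)$.

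The main obstacle is the first alternative in part $(ii)$. Here $(\bar\theta)^*(\xi)=\theta(\bar\xi)=\breve\theta(\xi)$, so Lemma~\ref{2l6} gives the condition $\breve\theta|_{S_m}\in H^\infty|_{S_m}$, not $\bar\theta|_{S_m}$. I would handle this by working with $\ch_{\bar\theta}$ directly instead of the flipped form. Note that $H_{\bar\theta}H_{\phi\eta}$ and $\ch_{\bar\theta}\ch_{\phi\eta}$ differ only by the unitary $\f$; more precisely, $(H_{\bar\theta}H_{\phi\eta})^*(H_{\bar\theta}H_{\phi\eta})=\ch_{\phi\eta}^*\,\f\ch_{\bar\theta}\ch_{\bar\theta}^{*}\f\,\ch_{\phi\eta}$. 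I would check whether compactness of $\ch_{\bar\theta}^*\ch_{\phi\eta}$ is the right reformulation. That operator is $\ch_{\bar{\phi_1}}^*\ch_\psi$ with $\phi_1=\theta$ and $\psi=\phi\eta$, so the first form of Lemma~\ref{2l6} yields the alternative $\bar\phi_1|_{S_m}=\bar\theta|_{S_m}\in H^\infty|_{S_m}$, matching the statement.

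Justifying this reformulation is the delicate point. Writing $\hp\cong \pt\f\ch_{\phi\eta}$, compactness is equivalent to compactness of $\ch_{\phi\eta}^*\f\pt\f\ch_{\phi\eta}$. Since $\f\pt\f$ is the projection onto $\bar\theta\,\overline{zK_\theta}$-type space, I would relate it to $\ch_{\bar\theta}^{*}$-type factors so as to land on a product of the form $\ch^*_{\bar\theta}\ch_{\phi\eta}$ up to unitaries. I expect this bookkeeping of $\f$, $*$ and $\breve{\ }$ to be the only real difficulty. All the other steps are direct substitutions into Lemmas~\ref{2l1}, \ref{2l3} and~\ref{2l6}.
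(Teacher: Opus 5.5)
Your part (i) is correct and is the paper's argument: Lemma~\ref{2l1}, the second form of Lemma~\ref{2l6}, the identity $(\breve\theta)^*=\bar\theta$, and unimodularity of $\eta,\theta$.

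In part (ii) you are right that Lemma~\ref{2l6}, applied to $H_{\bar\theta}H_{\phi\eta}$, gives the alternative $\breve\theta|_{S_m}\in H^\infty|_{S_m}$ and not $\bar\theta|_{S_m}\in H^\infty|_{S_m}$. The paper's proof obtains exactly this and then only writes ``hence the result follows''. Your proposed repair does not work.

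The identity you display has its middle factors in the wrong order: $\mathcal{J}\widehat{H}_{\phi\eta}$ maps into $H^2$, where $\widehat{H}_{\bar\theta}^*$ does not act. The correct form is $\widehat{H}_{\phi\eta}^*\mathcal{J}\,(\widehat{H}_{\bar\theta}^*\widehat{H}_{\bar\theta})\,\mathcal{J}\widehat{H}_{\phi\eta}$, with $\widehat{H}_{\bar\theta}^*\widehat{H}_{\bar\theta}=P_\theta|_{H^2}$. Using $\mathcal{J}M_\chi=M_{\breve\chi}\mathcal{J}$ one gets
\[
\mathcal{J}P_\theta\mathcal{J}=Q-M_{\breve\theta}\,Q\,M_{\overline{\breve\theta}},\qquad\text{so}\qquad \mathcal{J}P_\theta\mathcal{J}|_{\overline{H_0^2}}=\widehat{H}_{\breve\theta}\widehat{H}_{\breve\theta}^*.
\]
Thus $\mathcal{J}P_\theta\mathcal{J}$ is the projection onto $\overline{zK_{\theta^*}}$, where $\theta^*(z)=\overline{\theta(\bar z)}=\overline{\breve\theta(z)}$; nothing built from $\theta$ itself appears. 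Hence $\mathcal{H}_\phi$ is compact iff $\widehat{H}_{\breve\theta}^*\widehat{H}_{\phi\eta}$ $(=H_{\bar\theta}H_{\phi\eta})$ is compact. The first form of Lemma~\ref{2l6} then gives $\breve\theta|_{S_m}\in H^\infty|_{S_m}$ again, i.e.\ the inner function $\theta^*$ is constant on $S_m$.

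No bookkeeping can turn this into $\bar\theta$, because (ii) as stated is false. Take $\eta=1$, $\phi=\bar\theta$, and let $\theta$ be an infinite Blaschke product with zeros $a_n\to i$.
\begin{itemize}
\item $\theta$ is continuous on $\mathbb{T}\setminus\{i\}$, and $\theta^*$ (zeros $\bar a_n\to -i$) is continuous on $\mathbb{T}\setminus\{-i\}$.
\item Every support set lies in the fibre over a single point of $\mathbb{T}$. So on each $S_m$ either $\theta^*$ or $\theta=\overline{\phi\eta}$ is constant, and by Lemmas~\ref{2l3} and~\ref{2l6} the operator $\mathcal{H}_{\bar\theta}$ is compact.
\item However, $\bar\theta\notin H^\infty+C$, so some support set over $i$ has $\theta|_{S_m}$ nonconstant.
\item On that set both alternatives of (ii) fail, since each is equivalent to $\theta|_{S_m}$ being constant. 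So (ii) wrongly predicts non-compactness.
\end{itemize}
What your argument, and the paper's, actually proves is (ii) with $\bar\theta$ replaced by $\breve\theta$. The two versions agree, for example, when $\theta$ is symmetric, but not in general.
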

	
	\begin{proof}
		From Lemma \ref{2l1}, we get the RTO $\tp$ acting on $\eta H^2$ is compact if and only if $H_{\breve{\theta}}H_{\phi\eta\bar{\theta}}$ acting on $H^2$ is compact. Therefore by using Lemma~\ref{2l6}, we conclude that $H_{\breve{\theta}}H_{\phi\eta\bar{\theta}}$ is compact if and only if $\bar{\theta} |_{S_m}$ or $\phi \eta \bar{\theta}|_{S_m}$ is in $H^\infty |_{S_m}$, for each support set $S_m$.
		\vspace{0.1in}
		
		For the RHO $\hp$, using Lemmas \ref{2l3}, we have $\hp$ is compact if and only if $H_{\bar{\theta}}H_{\phi\eta}$ is compact. And, $H_{\bar{\theta}}H_{\phi\eta}$ is compact if and only if $\breve{\theta}|_{S_m} \in H^\infty |_{S_m}$ or $\phi\eta|_{S_m} \in H^\infty |_{S_m}.$ Hence the result follows.
	\end{proof}
	
	The following two corollaries are immediate consequences for the operators $\tau_\phi$ and $h_\phi$ due to equation \eqref{adj}. 
	
	\begin{corollary}\label{2c1}
		Let $\phi\in\li$ and $\eta, \theta$ are inner functons. Then,
		\begin{enumerate}[label=(\roman*)]
			\item $\tau_\phi=0$ if and only if $\phi \in \eta \overline{\theta H^\infty}$.
			\item $\tau_\phi$ is of finite rank if and only if $\phi \in\eta \overline{\theta(H^\infty+\mathcal{R}})$ or $\theta$ is a finite Blaschke product.
			\item $\tau_\phi$ is compact if and only if for each support set $S_m$, either $\bar{\theta}|_{S_m} \in H^\infty |_{S_m}$ or $\bar{\phi}|_{S_m} \in \bar\eta \theta H^\infty|_{S_m}$.
		\end{enumerate}
	\end{corollary}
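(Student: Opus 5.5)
The plan is to reduce everything to the results already proved for the restricted Toeplitz operator, using the adjoint identity \eqref{adj}, $\tau_\phi = \mathcal{T}_{\bar\phi}^*$. A bounded operator is zero, of finite rank, or compact if and only if its adjoint has the same property. So $\tau_\phi$ has each property exactly when the RTO $\mathcal{T}_{\bar\phi}:\eta H^2\to\2$ does. Each item then follows by applying Theorems~\ref{2t1}, \ref{2t2} and \ref{2t3} to the symbol $\bar\phi$ in place of $\phi$, and conjugating the resulting condition back.

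First I will check \eqref{adj} directly, since the whole argument rests on it. For $h\in\2$ and $g\in\eta H^2$,
$$\langle \tau_\phi h, g\rangle = \langle P_{\eta H^2}(\phi h), g\rangle = \langle \phi h, g\rangle = \langle h, \bar\phi g\rangle = \langle h, \pt(\bar\phi g)\rangle = \langle h, \mathcal{T}_{\bar\phi} g\rangle .$$
Here we use that $g\in\eta H^2$ and $h\in\2$, so the projections can be inserted or removed freely.

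The three items then go as follows.
\begin{enumerate}[label=(\roman*)]
\item By Theorem~\ref{2t1}(i), $\mathcal{T}_{\bar\phi}=0$ if and only if $\bar\phi\in\bar\eta\theta H^\infty$. Taking conjugates, this is $\phi\in\eta\,\overline{\theta H^\infty}$. This uses the standing hypothesis of Theorem~\ref{2t1} that $\theta$ is non-constant; when $\theta$ is constant, $\2=\{0\}$ and $\tau_\phi=0$ trivially. I will state this caveat explicitly, since the corollary's hypotheses omit it.
\item By Theorem~\ref{2t2}(i), $\mathcal{T}_{\bar\phi}$ has finite rank if and only if $\bar\phi\in\bar\eta\theta(H^\infty+\mathcal{R})$ or $\theta$ is a finite Blaschke product. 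Conjugating gives $\phi\in\eta\,\overline{\theta(H^\infty+\mathcal{R})}$.
\item By Theorem~\ref{2t3}(i) applied to $\bar\phi$, compactness holds if and only if, for each support set $S_m$, either $\bar\theta|_{S_m}\in H^\infty|_{S_m}$ or $\bar\phi|_{S_m}\in\bar\eta\theta H^\infty|_{S_m}$. This is exactly the stated condition.
\end{enumerate}

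There is no real obstacle; the proof is essentially bookkeeping. The only points needing care are the verification of \eqref{adj} above, and the correct conjugation of the function classes, for instance $\overline{\bar\eta\theta H^\infty}=\eta\,\overline{\theta H^\infty}$.
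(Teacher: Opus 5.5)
Your proposal is correct and takes the same route as the paper, which presents this corollary as an immediate consequence of the adjoint identity $\tau_\phi=\mathcal{T}_{\bar\phi}^*$ together with Theorems~\ref{2t1}, \ref{2t2} and \ref{2t3} applied to $\bar\phi$; your direct check of that identity and the conjugation $\overline{\bar\eta\theta H^\infty}=\eta\,\overline{\theta H^\infty}$ are both right. Your caveat in (i) is also justified: for a unimodular constant $\theta$ one has $\2=\{0\}$, so $\tau_\phi=0$ for every $\phi$. Hence the non-constancy hypothesis of Theorem~\ref{2t1} is genuinely needed there, whereas (ii) and (iii) hold trivially in that case.
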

	
	\begin{corollary}\label{2c2}
		Let $\phi\in\li$ and $\eta, \theta$ are inner functons. Then,
		\begin{enumerate}[label=(\roman*)]
			\item $h_\phi=0$ if and only if $ \phi \in \breve{\eta}{H^\infty}$.
			\item $h_\phi$ is of finite rank if and only if $\phi^* \in{\bar{\eta}(H^\infty+\mathcal{R})}$ or $\theta$ is a finite Blaschke product.
			\item $h_\phi$ is compact if and only if for each support set $S_m$, $\bar{\theta}|_{S_m} \in H^\infty |_{S_m}$ or $\phi |_{S_m} \in \breve\eta H^\infty |_{S_m}$.
		\end{enumerate}
	\end{corollary}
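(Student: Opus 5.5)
The approach is to reduce each part of the corollary to the corresponding statement about a restricted Hankel operator, using the adjoint relation \eqref{adj}, $h_\phi=\mathcal{H}_{\phi^*}^*$. A bounded operator is zero, of finite rank, or compact exactly when its adjoint is. So $h_\phi$ has each property if and only if the RHO $\mathcal{H}_{\phi^*}:\eta H^2\to\2$ does, and $\phi^*\in\li$ whenever $\phi\in\li$. I will therefore apply Theorem~\ref{2t1}(ii), Theorem~\ref{2t2}(ii) and Theorem~\ref{2t3}(ii) with symbol $\phi^*$, and then rewrite the resulting conditions in terms of $\phi$. The conditions involving only $\theta$ (finite Blaschke product, $\bar\theta|_{S_m}\in H^\infty|_{S_m}$) carry over unchanged. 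For (i), $\theta$ is assumed non-constant as in Theorem~\ref{2t1}; when $\theta$ is constant, $\2=\{0\}$ and everything is trivial.

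For (i), Theorem~\ref{2t1}(ii) gives $h_\phi=0$ if and only if $\phi^*\in\bar\eta H^\infty$. The key observation is that the map $f\mapsto f^*$, $f^*(\xi)=\overline{f(\bar\xi)}$, is an involutive, conjugate-linear, multiplicative isometry of $\li$ that maps $H^\infty$ onto itself: on Fourier coefficients it just conjugates them. Now write $\phi^*=\bar\eta g$ with $g\in H^\infty$ and apply $^*$. Since $(\bar\eta)^*(\xi)=\eta(\bar\xi)=\breve\eta(\xi)$, this gives $\phi=\breve\eta\, g^*$ with $g^*\in H^\infty$, that is, $\phi\in\breve\eta H^\infty$. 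The converse follows by reversing the steps. For (ii), Theorem~\ref{2t2}(ii) applied to $\phi^*$ yields exactly the stated condition $\phi^*\in\bar\eta(H^\infty+\mathcal{R})$ or $\theta$ a finite Blaschke product, so nothing further is needed.

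For (iii), Theorem~\ref{2t3}(ii) gives the following: $h_\phi$ is compact if and only if, for each support set $S_m$, either $\bar\theta|_{S_m}\in H^\infty|_{S_m}$ or $\phi^*|_{S_m}\in\bar\eta H^\infty|_{S_m}$. This is the step I expect to be the main obstacle, because the translation to a condition on $\phi$ has to pass through the maximal ideal space. The plan is as follows.
\begin{enumerate}[label=(\alph*)]
\item The involution $^*$ induces a homeomorphism $\sigma$ of $\mathcal{M}(\li)$ defined by $\sigma(x)(f)=\overline{x(f^*)}$.
\item Since $^*$ preserves $H^\infty$ and $C$, and hence $H^\infty+C$, the map $\sigma$ also acts on $\mathcal{M}(H^\infty+C)$. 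It carries representing measures to representing measures, so it permutes the support sets.
\item The relation $\phi^*|_{S}\in\bar\eta H^\infty|_{S}$ is equivalent to $\phi|_{\sigma(S)}\in\breve\eta H^\infty|_{\sigma(S)}$, by the same algebraic computation as in (i) carried out on restrictions.
\item Because the condition in Theorem~\ref{2t3}(ii) quantifies over all support sets, relabelling $S_m$ by $\sigma(S_m)$ gives the stated form.
\end{enumerate}
One point needs care: the $\theta$-alternative in the relabelled condition becomes $(\bar\theta)^*=\breve\theta$ restricted to $\sigma(S_m)$, not $\bar\theta$. The proof of Theorem~\ref{2t3} already treats $\breve\theta|_{S_m}\in H^\infty|_{S_m}$ and $\bar\theta|_{S_m}\in H^\infty|_{S_m}$ as interchangeable, so I would invoke that identification in the same way and conclude (iii).
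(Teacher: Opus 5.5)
\textbf{There is a genuine gap in the last step of (iii).} Your route, the adjoint relation \eqref{adj} followed by Theorems~\ref{2t1}--\ref{2t3} with symbol $\phi^*$, is the paper's. Parts (i) and (ii) are fine. The relabelling (a)--(d) is also correct, and it supplies a step the paper's ``immediate'' hides, namely $g|_S\in H^\infty|_S \iff g^*|_{\sigma(S)}\in H^\infty|_{\sigma(S)}$.

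The failure is your appeal to an identification of $\breve\theta|_{S}\in H^\infty|_{S}$ with $\bar\theta|_{S}\in H^\infty|_{S}$. These are \emph{not} interchangeable.
For an inner $u$, $\bar u|_S\in H^\infty|_S$ if and only if $u$ is constant on $S$. Since $\breve\theta=\overline{\theta^*}$, the first condition says that $\theta^*$ is constant on $S$, and the second says that $\theta$ is. Now let $\theta$ be a singular inner function whose only singularity is at $i$.
\begin{enumerate}[label=(\alph*)]
\item $\theta^*$ is singular only at $-i$, so it is continuous near $i$ and hence constant on every support set in the fibre over $i$.
\item $\theta$ must be non-constant on some of those support sets. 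Otherwise $\theta$ is constant on every support set, and $P_\theta=\ch_{\bar\theta}^*\ch_{\bar\theta}$ would be compact by Lemma~\ref{2l6}, forcing $\theta$ to be a finite Blaschke product.
\end{enumerate}

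The mismatch you noticed is actually a misprint in Theorem~\ref{2t3}(ii). Since $(\bar\theta)^*=\breve\theta$, Lemmas~\ref{2l3} and~\ref{2l6} give the condition ``$\breve\theta|_{S_m}\in H^\infty|_{S_m}$ or $\phi\eta|_{S_m}\in H^\infty|_{S_m}$''. This is what the paper's proof of Theorem~\ref{2t3} writes, and the printed version with $\bar\theta$ is false. To see this, take $\eta=1$, symbol $\breve\theta$, and $\theta$ as above. Because $\mathcal{J}(\breve\theta f)=\theta\,\mathcal{J}f$ and $P(\theta\ho)\subset\2$, the RHO equals $H_{\breve\theta}=H_{\bar\theta}^*$, which is not compact. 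Yet the printed criterion holds on every support set: $\theta$ is constant over each point $\lambda\neq i$, and $\theta^*$ is constant over $i$.

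The repair uses only your own machinery. With the correct form of Theorem~\ref{2t3}(ii) applied to $\phi^*$, $h_\phi$ is compact if and only if, for every support set $S$, either $\breve\theta|_S\in H^\infty|_S$ or $(\eta\phi^*)|_S\in H^\infty|_S$. Relabelling by $\sigma$ turns these conditions into:
\begin{enumerate}[label=(\alph*)]
\item $\bar\theta|_{\sigma(S)}\in H^\infty|_{\sigma(S)}$, because $(\breve\theta)^*=\bar\theta$;
\item $(\eta^*\phi)|_{\sigma(S)}\in H^\infty|_{\sigma(S)}$, that is, $\phi|_{\sigma(S)}\in\breve\eta H^\infty|_{\sigma(S)}$, because $(\eta\phi^*)^*=\eta^*\phi$ and $\overline{\eta^*}=\breve\eta$.
\end{enumerate}
This is exactly (iii), with no identification needed. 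As written, your argument reaches the correct statement only because two false steps cancel: the misprinted theorem and the interchange of $\breve\theta$ with $\bar\theta$.
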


	\section{Characterizations}
	
	This section is devoted to characterizing restricted operators via operator equations. We begin by recalling the algebraic characterizations of classical Toeplitz operators and their truncated and dual variants.
	
	A bounded operator $X: H^2 \to H^2$ is a Toeplitz operator if and only if $S^* X S = X$, in which case $X = T_\phi$ for some $\phi \in L^\infty(\mathbb{T})$  (Brown--Halmos \cite{BH}). In contrast, the truncated Toeplitz on $\2$ behaves differently: a bounded operator $Y:\2\rightarrow\2$ is a truncated Toeplitz operator if and only if $Y-S_\theta Y S_\theta^*$ is a certain rank-two operator (Sarason \cite{DES}).
	
	In (\cite{DS}, 2018), Ding-Sang introduced the concept of dual truncated Toeplitz operator (DTTO) $D_\phi^\theta:\2 ^\perp\rightarrow\2^\perp$, defined by $D_\phi^\theta(h)=Q_\theta(\phi h),$ for  $\phi\in\li$. Later, C\^amara et al. in \cite{CKLP} as well as Gu in \cite{CG} provided some characterizations of DTTO in parallel to the Toeplitz and the truncated Toeplitz operators by showing that $D_\phi^\theta - U_\theta^* D_\phi^\theta U_\theta$ is a rank-two operator, where $U_\theta=Q_\theta M_z |_{ \2 ^\perp }$ is the dual compressed shift.
	
	We now turn to the characterization of restricted Toeplitz operators. We begin with the following lemma, whose proof is straightforward and hence omitted.
	
	\begin{lemma}\label{3l1}
		If $k$ is an element of $\2$, then $S^*(k)= S_\theta^*(k)$. 
	\end{lemma}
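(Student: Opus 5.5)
The statement is Lemma~\ref{3l1}: for $k\in\2$ we have $S^*k=S_\theta^*k$. The plan is to identify $S_\theta^*$ concretely and then use the fact that $\2$ is invariant under $S^*$.

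First, compute the adjoint of the compressed shift. By definition $S_\theta=\pt S|_{\2}$. For $k,g\in\2$,
\[
\langle S_\theta^* k, g\rangle=\langle k,\pt S g\rangle=\langle k, Sg\rangle=\langle S^*k,g\rangle .
\]
The middle equality uses that $\pt$ is self-adjoint and that $\pt k=k$. Since $g\in\2$ is arbitrary, this shows $S_\theta^*k=\pt S^*k$, that is, $S_\theta^*=\pt S^*|_{\2}$.

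Second, show that $S^*$ maps $\2$ into itself. This is the co-invariance noted in the introduction. It follows from Beurling's theorem: $S(\theta H^2)\subseteq\theta H^2$. Hence for $k\in\2$ and $f\in H^2$,
\[
\langle S^*k,\theta f\rangle=\langle k, z\theta f\rangle=0 .
\]
So $S^*k\in(\theta H^2)^\perp\cap H^2=\2$.

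Combining the two steps, $\pt S^*k=S^*k$, and therefore $S_\theta^*k=S^*k$.

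No step is a real obstacle. The only point needing care is to compute the adjoint inside the Hilbert space $\2$ rather than in $H^2$. Once that is done, the invariance of $\2$ under $S^*$ makes the projection $\pt$ act trivially on $S^*k$.
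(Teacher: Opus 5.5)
Your argument is correct. The paper omits this proof as straightforward, and your two steps, identifying $S_\theta^* = P_\theta S^*|_{K_\theta}$ and then using the $S^*$-invariance of $K_\theta$, are the standard reasoning it has in mind; the paper uses the same identity $S_\theta^*=S^*|_{K_\theta}$ in the second proof of Theorem~\ref{3thm}.

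One small point: $S(\theta H^2)\subseteq\theta H^2$ follows at once from $z\theta f\in\theta H^2$, so it does not need Beurling's theorem, which concerns the converse direction.
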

		\vspace{0.1in}
		
		$\bullet$ {\bf{{\emph{Proof of the Theorem \ref{3th1}:}}}}
		\vspace{0.1in}
		
		From Lemma \ref{2l1}, it follows that the action of $\tp$ on $\eta H^2$ is equivalent to the action of $H_{\breve{\theta}} H_{\bar{\theta} \phi\eta}$ on $H^2.$ Therefore, we have the following.
		\begin{align*}
			\tp  = H_{\breve{\theta}} H_{\bar{\theta} \phi\eta}  \implies S_\theta^* \tp \mathcal{S}_{(\eta)} & = S^* \tp \mathcal{S}_{(\eta)} \quad \text{ (by Lemma~\ref{3l1}) }\\
			&= S^* H_{\breve{\theta}} H_{\bar{\theta} \phi\eta} S\\
			& = H_{\breve{\theta}} S S^* H_{\bar{\theta} \phi\eta}\\
			& = H_{\breve{\theta}} (I-e_0\otimes e_0) H_{\bar{\theta} \phi\eta} \quad \text{ (here $e_n=z^n, n\geq0$) }\\
			& = \tp - H_{\breve{\theta}}(e_0) \otimes H_{\bar{\theta} \phi\eta }^*(e_0)\\
			& = \tp - P(\bar{z} \theta) \otimes P(\bar{z} \theta \overline{\phi\eta}).
		\end{align*}
		Thus we have \begin{equation}\label{3eq1}
			\tp - S_\theta^* \tp \mathcal{S}_{(\eta)} = P(\bar{z} \theta) \otimes P(\bar{z} \theta \overline{\phi\eta}) = S^* (\theta) \otimes P(\theta \overline{z\phi\eta}).\end{equation}
		For the converse part, let us assume that the operator $X:\eta H^2\rightarrow\2$ satisfies 
		\begin{equation}\label{3eq2}
			X- S_\theta^* X \mathcal{S}_{(\eta)} = S^* (\theta) \otimes P(\theta \overline{z\phi\eta}),
		\end{equation}
		for some $\phi\in\li$. 
		
		Now by subtracting equation \eqref{3eq1} from equation \eqref{3eq2}, we get 
		\begin{equation}\label{3eq3}
			S_\theta^* (X-\tp) \mathcal{S}_{(\eta)} = (X-\tp) .
		\end{equation}
		Therefore, the solution of the equation $ S_\theta^* A \mathcal{S}_{(\eta)} = A $ for some $A:\eta H^2\rightarrow\2$ gives the solution of the equation \eqref{3eq3}. 
		
		The operator $A:\eta H^2\rightarrow\2$ can be viewed as $A:H^2\rightarrow H^2,$ with $A|_{K_\eta}=0$ and $range(A) \subset \2,$ which turns the equation $ S_\theta^* A \mathcal{S}_{(\eta)} = A $ into $ S^* A S = A $ in $H^2$. So, the Brown-Halmos identity for the Toeplitz operator shows that $A$ is nothing but the Toeplitz operator $T_\psi,$ for some $\psi\in \li,$ with $T_\psi|_{K_\eta}=0$ and $range(T_\psi)\subset \2.$ 
		
		But the only Toeplitz operator $T_\psi$ with $range(T_\psi)\subset \2$ is zero. Indeed, consider $T_\psi$ with $range(T_\psi)\subset \2$. Then we have the following.
		\begin{align*}
			range(T_\psi)\subset \2 & \implies \langle T_\psi h, \theta f\rangle = 0, \quad \text{ for all $h, f\in H^2$ }\\
			& \implies \langle \bar{\theta}\psi h, f \rangle = 0, \quad \text{ for all $h, f\in H^2$ }\\
			& \implies \bar{\theta}\psi h \perp H^2, \quad \text{ for all $h\in H^2$ }\\
			& \implies P(\bar{\theta}\psi h)=0 \quad \text{ for all $h\in H^2$. }
		\end{align*}
		Set $h=z^n$ for $n\geq0,$ then we have $P(\bar{\theta}\psi z^n)=0$ for $n\geq0.$ This shows that $\bar{\theta}\psi$ has no negative Fourier coefficient, and thus $\bar{\theta}\psi\in H^2.$ So, at the end, we have $\psi$ is an $H^{\infty}$ element. 
		
		Therefore, $T_{\psi}(H^2)=\psi H^2\subset \2$, which is possible only when $\psi$ is none other than zero, and hence $T_\psi=0.$ 
		
		Also, note that one can also establish the above result by applying a more general theorem provided by D. Vukotić in \cite{DV}.
		
		Hence, the only solution to the operator equation $S_\theta^* A \mathcal{S}_{(\eta)} = A$ derived from equation \eqref{3eq3} is zero, therefore $X=\tp$ is the solution to the equation \eqref{3eq3}. This completes the proof. \hspace{4in} $\square$
		
		As a consequence, we get a characterization of the operator $\tau_\phi$ as follows.
		\begin{corollary}
			Let $\eta, \theta$ be two inner functions, $\phi \in \li $, and let $Y:\2 \rightarrow \eta H^2$ be a bounded linear operator. Then, $Y= \tau_\phi$ if and only if 
			$$ Y - \mathcal{S}_{(\eta)}^* Y S_\theta = P(\theta\phi \overline{z\eta}) \otimes S^*(\theta). $$
		\end{corollary}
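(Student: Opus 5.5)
We will deduce the statement from Theorem~\ref{3th1} by taking adjoints, using the identity $\tau_\phi=\mathcal{T}_{\bar\phi}^*$ from \eqref{adj}. Here $\mathcal{T}_{\bar\phi}:\eta H^2\to\2$ is the restricted Toeplitz operator with symbol $\bar\phi$. The correspondence $Y\leftrightarrow X:=Y^*$ is a bijection between bounded operators $\2\to\eta H^2$ and bounded operators $\eta H^2\to\2$. It therefore suffices to show that the displayed identity for $Y$ is exactly the adjoint of the identity in Theorem~\ref{3th1} for $X=Y^*$ with symbol $\bar\phi$.

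Apply Theorem~\ref{3th1} with $\phi$ replaced by $\bar\phi$. This gives that $X=\mathcal{T}_{\bar\phi}$ if and only if
\[
X-S_\theta^*X\mathcal{S}_{(\eta)}=S^*(\theta)\otimes P(\theta\overline{z\bar\phi\eta})=S^*(\theta)\otimes P(\theta\phi\overline{z\eta}).
\]
Strictly, the theorem says that this equation characterizes $\mathcal{T}_{\bar\phi}$ itself. This is the ``Moreover, $X=\tp$'' part, which the uniqueness argument in its proof gives: the only bounded solution of $S_\theta^*A\mathcal{S}_{(\eta)}=A$ is $A=0$.

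Next we take adjoints of both sides, using three facts.
\begin{enumerate}[label=(\alph*)]
\item $(S_\theta^*X\mathcal{S}_{(\eta)})^*=\mathcal{S}_{(\eta)}^*X^*S_\theta$, where $\mathcal{S}_{(\eta)}^*$ denotes the adjoint of $\mathcal{S}_{(\eta)}$ as an operator on $\eta H^2$.
\item The rank-one convention $(u\otimes v)(h)=\langle h,v\rangle u$ gives $(u\otimes v)^*=v\otimes u$.
\item $P(\theta\phi\overline{z\eta})$ lies in $\eta H^2$, as the statement implicitly requires. Indeed, as a functional on $\eta H^2$ it satisfies $\langle \eta f, P(\theta\phi\overline{z\eta})\rangle=\langle \eta f,\theta\phi\overline{z\eta}\rangle$. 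So it is the $\eta H^2$-representative, and we may replace it by $P_{\eta H^2}(\theta\phi\overline{z\eta})$. From the proof of Theorem~\ref{3th1}, the vector there is $H^*_{\bar\theta\phi\eta}(e_0)$ viewed through the identification of $\eta H^2$ with $H^2$ via $f\mapsto \eta f$, and this should be checked.
\end{enumerate}
Combining these, the adjoint identity reads
\[
Y-\mathcal{S}_{(\eta)}^*YS_\theta=P(\theta\phi\overline{z\eta})\otimes S^*(\theta).
\]

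The main obstacle is this bookkeeping of the rank-one vector, namely which space it lives in and whether the $\eta$-identification is consistent. I would first verify it by pairing both sides against $k\in\2$ and $\eta f\in\eta H^2$. The equivalence then follows because $Y=\tau_\phi\iff Y^*=\mathcal{T}_{\bar\phi}$, and taking adjoints is invertible.
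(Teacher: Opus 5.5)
Your route is the paper's own. Its proof takes adjoints in \eqref{3eq1} with $\phi$ replaced by $\bar\phi$, uses $\mathcal{T}_{\bar\phi}^*=\tau_\phi$, and gets uniqueness from the fact that $S_\theta^*A\mathcal{S}_{(\eta)}=A$ forces $A=0$. Your items (a) and (b) are fine.

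Step (c) is false, and it is exactly the bookkeeping you flagged. The vector $P(\theta\phi\overline{z\eta})$ need not lie in $\eta H^2$, and replacing it by $P_{\eta H^2}(\theta\phi\overline{z\eta})$ gives the wrong operator. Take $\eta=z^2$, $\theta=z^3$, $\phi=1$. Then $P(\theta\phi\overline{z\eta})=1\notin z^2H^2$ and $P_{\eta H^2}(1)=0$, so your identity would say the left side vanishes. But for $k=a+bz+cz^2\in K_\theta$ one has $\tau_1k=cz^2$ and $\mathcal{S}_{(\eta)}^*\tau_1S_\theta k=\mathcal{S}_{(\eta)}^*(bz^2)=0$. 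Hence $\tau_1-\mathcal{S}_{(\eta)}^*\tau_1S_\theta=z^2\otimes z^2\neq0$.

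The error comes from reading the rank-one term of Theorem~\ref{3th1} literally, as the functional $h\mapsto\langle h,P(\theta\overline{z\psi\eta})\rangle$ on $\eta H^2$. The paper computes that term through Lemma~\ref{2l1}, i.e.\ in the coordinate $f$ of $h=\eta f$. So the functional is really $\eta f\mapsto\langle f,P(\theta\overline{z\psi\eta})\rangle$, and your pairing $\langle \eta f,\cdot\rangle$ adds an extra factor $\bar\eta$. Also, with symbol $\bar\phi$ the vector in that proof is $H^*_{\bar\theta\bar\phi\eta}(e_0)$, not $H^*_{\bar\theta\phi\eta}(e_0)$.

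A short direct computation repairs this and settles the identification. Write $P_\theta=P-\theta P\bar\theta$ and $P(zu)=zPu+\langle u,\bar z\rangle$. Then for every $g\in L^2$,
\[
P_\theta g-S^*P_\theta(zg)=\langle g,\bar z\theta\rangle S^*\theta .
\]
Take $g=\bar\phi h$ with $h\in\eta H^2$; here $\mathcal{S}_{(\eta)}h=zh$, and $S_\theta^*=S^*$ on $K_\theta$ by Lemma~\ref{3l1}. This gives
\[
\bigl(\mathcal{T}_{\bar\phi}-S_\theta^*\mathcal{T}_{\bar\phi}\mathcal{S}_{(\eta)}\bigr)h=\langle h,\bar z\theta\phi\rangle\, S^*\theta .
\]
So the correct vector in $\eta H^2$ is $P_{\eta H^2}(\bar z\theta\phi)=\eta\,P(\theta\phi\overline{z\eta})$. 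The adjoint identity is therefore
\[
Y-\mathcal{S}_{(\eta)}^*YS_\theta=\eta\,P(\theta\phi\overline{z\eta})\otimes S^*(\theta).
\]
The statement's $P(\theta\phi\overline{z\eta})$ must be read as the $H^2$-coordinate of this vector under $f\mapsto\eta f$, an identification the paper leaves implicit. With that reading and the uniqueness you cite, your argument is complete.
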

		
		\begin{proof}
			The result follows by taking the adjoint of both sides of the equation \eqref{3eq1} and replacing $\phi$ with $\bar{\phi}$ (since $\tp^*= \tau_{\bar{\phi}}$).
		\end{proof}
		
		Now moving forward, we call an RTO analytic if its corresponding symbol $\phi$ is analytic, that is, $\phi\in H^\infty$. A classical result of Brown--Halmos (\cite{BH}, Theorem~7) asserts that a bounded operator $A: H^2\rightarrow H^2$ is an analytic Toeplitz operator if and only if $AS=SA$. 
		
		Subsequently, Sarason (\cite{SAR}, Theorem 1) showed, using the commutant lifting theorem, that every bounded operator on $\2$ commuting with $S_\theta$ is an analytic truncated Toeplitz operator, and conversely. In addition, an alternative proof of this result was later given by Nikolski \cite{NKN} (see Theorem~3.1.11, Part~B).
		
		Motivated by these results, we now provide a characterization of a certain kind of restricted Toeplitz operator, stated in Theorem \ref{3thm}.
		\vspace{0.1in}
		
		$\bullet$ {\bf{{\emph{Proof of the Theorem \ref{3thm}:}}}}
		\vspace{0.1in}
		
		Firstly, if $A= \tp,$ with $\phi= \bar{\eta}\psi$, for $\psi\in H^\infty$, then for any $f\in  H^2,$ we have: 
		\begin{align*}
			S_\theta A(\eta f) &= \pt (z \tp(\eta f))\\
			& = \pt (z\pt(\phi \eta f))\\ 
			& = \pt ( z (I-M_\theta P M_{\bar{\theta}})(\psi f) ) \quad \text{ (recall: $\pt|_{H^2} = I - M_\theta P M_{\bar{\theta}} $) } \\
			& = \pt (z\psi f) - \pt (z \theta P({\bar{\theta}}\psi f) )\\
			& = \pt (\phi \mathcal{S}_{(\eta)}(\eta f))-0 \quad \text{ (since, $z\theta H^2 \subset \theta H^2$ )}~ \\&= \tp S (f).
		\end{align*}
		This shows that if $A= \tp,$ with $\phi\in \bar{\eta}H^\infty$, then $S_\theta A = A\mathcal{S}_{(\eta)}$. 
		
		For the converse, we provide two separate proofs; the first is motivated by \cite{NKN}, and the second one uses the Intertwining Lifting Theorem (see Theorem 2.3 of the monograph \cite{Book}). 
		\vspace{0.1in}

		{\textbf{\emph{First Proof}:}} Suppose $A: \eta H^2\rightarrow\2$ is a bounded operator with $S_\theta A = A\mathcal{S}_{(\eta)}$. Consider $A(\eta e_0)= A(\eta) = \alpha$, so $\alpha\in \2.$ 
		
		Now, for any $n\in \mathbb{N}$, we have 
		$$ A(\eta z^n)=  A\mathcal{S}_{(\eta)}^n(\eta) = S_\theta ^n A(\eta) = S_\theta^n (\alpha) = \pt(z^n \alpha).$$ 
		
		Therefore, it is not difficult to show that 
		\begin{equation}\label{3eq}
			A(\eta \theta) = \pt(\eta\theta \alpha) = 0.
		\end{equation}
		
		
		Now, construct an operator
		\begin{align*}
			& A_* :  H^2\rightarrow\ho \\
			& f \longrightarrow M_{\bar{\theta}} A M_\eta f.
		\end{align*} 
		This indeed is a well-defined map, since ${\bar{\theta}} \2 = \overline{z \2}\subset \ho$. Now we show that $A_*$ is a Hankel operator.
		
		Starting with $S_\theta A = A\mathcal{S}_{(\eta)},$ we have
		\begin{align*}
			S_\theta A = A\mathcal{S}_{(\eta)}  & \iff \pt S A M_\eta = A \mathcal{S}_{(\eta)}M_\eta ~ \text{ (on $H^2$) }  \\
			& \iff  (I - M_\theta P M_{\bar{\theta}}) S A M_\eta = A M_\eta S \quad \text{ (as $\mathcal{S}_{(\eta)}M_\eta= M_\eta S$) }\\
			& \iff M_\theta (I-P) M_{\bar{\theta}} S AM_\eta = A M_\eta S\\
			& \iff (I-P) M_z M_{\bar{\theta}} A M_\eta = M_{\bar{\theta}} A M_\eta S\\
			&\iff (I-P) M_z A_* = A_*S.
		\end{align*}
		Thus, $A_*$ is a Hankel operator. Therefore, by the Nehari's theorem \cite{ZN}, there is $\xi\in \li$ such that $A_*= \ch_{\xi}$.
		
		Now, consider $\theta \xi = \phi + \psi$, where $\phi \in H^2$ and $\psi \in \ho.$ Then we have 
		\begin{align*}
			& \psi = (I-P)(\theta \xi)\\
			& = \ch_\xi(\theta) = A_*(\theta) \\
			& = M_{\bar{\theta}} AM_\eta(\theta) = M_{\bar{\theta}} A(\eta\theta) = 0 \quad \text{ (by equation \eqref{3eq}). }
		\end{align*}
		
		Therefore, we conclude that $\theta\xi = \phi \in H^2 \cap \li= H^\infty$. 
		
		Rewrite the identity as $\overline{\theta} \phi = \xi$, and for any $f\in H^2$, we have the following:
		\begin{align*}
			A(\eta f) & = M_\theta M_{\bar{\theta}} A M_\eta (f)\\
			& = M_\theta A_* (f) \\
			& = M_\theta \ch_{\xi} (f) \\
			& = M_\theta \ch_{\phi \overline{\theta}}(f) \\
			& = M_\theta (I-P) M_{\phi \overline{\theta}}(f)\\
			& = M_\theta (I-P) M_{\bar{\theta}} (\phi f)\\
			& = \pt (\phi \bar{\eta} \eta f) = \mathcal{T}_{\bar{\eta}\phi}(\eta f).
		\end{align*}
		This completes the (first) proof.
		\vspace{0.1in}
		
		{\textbf{\emph{Second Proof}:}} We first recall the intertwining lifting theorem.
		\vspace{0.1in}
		
		(\emph{Intertwining Lifting Theorem}:) \emph{Let $\h_1, \h_2, \mathcal{K}_1 \text{ and }\mathcal{K}_2$ be four Hilbert spaces. Let $A_1 \in \mathcal{B}(\h_1)$ and let $A_2 \in \mathcal{B}(\h_2)$ be two contractions. Let $V_1 \in \mathcal{B}(\mathcal{K}_1)$ and  $V_2 \in \mathcal{B}(\mathcal{K}_2)$ be minimal isometric dilations of $A_1 \text{ and } A_2$, respectively. Then for an operator $X \in \mathcal{B}(\h_2,\h_1)$ satisfying $XA_2=A_1X$, there exists $Y \in \mathcal{B}(\mathcal{K}_2,\mathcal{K}_1)$ such that $YV_2=V_1Y,~||Y||=||X||,~X=P_{\h_1} Y|_{\h_2}$ and $Y(\mathcal{K}_2 \ominus \h_2) \subseteq \mathcal{K}_1 \ominus \h_1.$}
		\vspace{0.1in}

		Let $A: \eta H^2\rightarrow\2$ be a bounded operator such that $ A\mathcal{S}_{(\eta)}=S_\theta A$.
		Note that $S_\theta$ is a model operator, and hence, a minimal isometric dilation of $S_\theta$ is $S$ on $H^2.$ For, $S^*_\theta=S^*|_{\2}$ implies $S_\theta^n=P_{\2}S^n|_{\2}$ for $n\in \mathbb{N}\cup\{0\}.$ Let $\mathcal{H}_0$ be the linear span of $\{S^n\2: ~n\in \mathbb{N} \cup \{0\}\}.$ Let $h\in \mathcal{H}_0,$ then
		$$ h= \sum_{n=0}^{N}S^nk_n
		\implies Sh= \sum_{n=0}^{N}S^{n+1}k_n,
		$$ \text{ and }
		$$ S^*h= S_\theta ^*k_0+\sum_{n=1}^{N}S^{n-1}k_n,~ k_n \in \2.$$
		The above shows that $\bigvee_{n=0}^{\infty}S^n\2$ is a $S$-reducing subspace of $H^2,$ and hence $\bigvee_{n=0}^{\infty}S^n\2=H^2$.
		Since $\mathcal{S}_{(\eta)}$ is an isometry, it is a minimal isometric dilation of $\mathcal{S}_{(\eta)}$ itself. Therefore, by the above Intertwining Lifting Theorem, there exists a bounded linear operator $B: \eta H^2 \to H^2$ such that $$B\mathcal{S}_{(\eta)}=SB \quad \text{ and } \quad A=P_\theta B.$$
		
		\[
		\begin{tikzcd}[row sep=1em]
			{\eta H^2} \arrow[rr,"\mathcal{S}_{(\eta)}"] \arrow[dr,swap,"B"] \arrow[dd,swap,"I_{\eta H^2}"] &&
			{\eta H^2} \arrow[dd,swap,"I_{\eta H^2}" near start] \arrow[dr,"B"] \\
			& H^2 \arrow[rr,crossing over,"S" near start] &&
			H^2 \arrow[dd,"P_{\theta}"] \\
			{\eta H^2} \arrow[rr,"\mathcal{S}_{(\eta)}" near end] \arrow[dr,swap,"A"] &&{\eta H^2} \arrow[dr,swap,"A"] \\
			& K_\theta \arrow[rr,"S_{\theta}"] \arrow[uu,<-,crossing over,"P_{\theta}" near end]&& K_\theta
		\end{tikzcd}
		\]
		
		That is, for $g \in H^2,$
		\begin{align*}
			B\mathcal{S}_{(\eta)}(\eta g) = SB(\eta g)& \implies BS(\eta g)=SB(\eta g)\\
			& \implies BST_\eta (g)=SBT_\eta(g)\\
			& \implies BT_\eta S(g)= SBT_\eta(g).                
		\end{align*}
		This gives $$BT_\eta=T_\phi \quad \text{ for some $\phi \in H^\infty.$ }$$ 
		So we can write 
		$$B=T_{\phi \overline{\eta}}|_{\eta H^2} \quad \text{ for some $\phi \in H^\infty.$ }$$ 
		Moreover,
		$$A=P_\theta B \implies A=\mathcal{T}_{\phi{\overline{\eta}}} \quad \text{for some {$ \phi \in H^\infty$},}$$ equivalently, $A=\tp$ for $\phi \in \overline{\eta} H^\infty.$ This completes the (second) proof. \hspace{0.5in} $\square$
		\vspace{0.1in}
		
		In particular, if we consider the inner function $\eta$ to be a unimodular constant, we have the following remark.
		\begin{remark}
			Let $A: H^2\rightarrow\2$ be a bounded linear operator. Then $A$ is an analytic restricted Toeplitz operator $\tp$ if and only if $ AS=S_\theta A$. 
		\end{remark}
		
		We now turn to the characterization of restricted Hankel operators. Recall that a bounded operator $X: H^2 \to H^2$ is a Hankel operator if and only if $S^*X = XS$, in which case $X = H_\phi$ for some $\phi \in L^\infty(\mathbb{T})$ (Nehari’s theorem \cite{ZN}).
		
		In the truncated setting, Gu-Ma in \cite{GM} showed that $ S_\theta^*B_\phi^\theta-B_\phi^\theta S_\theta$ is a certain rank two operator, where $B_\phi^\theta$ is a truncated Hankel operator. In addition, sufficient conditions for an operator to be a truncated Hankel operator are also discussed.
		
		More recently, in 2025, the second and third named authors of this article initiated the study of the dual truncated Hankel operators (DTHO) $\h_\phi^\theta$ in \cite{CJ}. It is defined by $$\h_\phi^\theta(h)= Q_\theta J(\phi h) \quad \text{for $\phi\in \li$},$$
		and $J: L^2\rightarrow L^2$ is another flip operator $Jf(z)=f(\bar z).$ In connection with the Hankel and the truncated Hankel operators, it is shown in \cite{CJ} that $U_\theta^* \h_\phi^\theta - \h_\phi^\theta U_\theta$ is a rank two operator. Conversely, the sufficient conditions for an operator to be DTHO have also been derived. 
		
		We now present the following set of results that lead to a characterization of the restricted Hankel operators. We begin with the following lemma.
		
		\begin{lemma}\label{3l2}
			Suppose $\phi\in\li$ and let $\hp:\eta H^2 \rightarrow\2$ be the restricted Hankel operator. Then $S_\theta^* \hp - \hp \mathcal{S}_{(\eta)}$ is at most a rank-one operator. In fact, 
			\begin{equation}\label{3eq4}
				\hp \mathcal{S}_{(\eta)} -  S_\theta^*\hp = S^*(\theta) \otimes \Phi, \quad \text{ where $\Phi=P(\breve{\theta} ~\overline{z\phi\eta})$.}
			\end{equation}
		\end{lemma}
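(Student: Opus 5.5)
The plan is to transport everything to $H^2$ exactly as in Lemma~\ref{2l3}, and then reduce the identity to a commutator computation between $S^*$ and $\pt|_{H^2}=I-T_\theta T_{\bar\theta}$.

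\textbf{Step 1: transfer to $H^2$.} For $h=\eta f$ with $f\in H^2$, the proof of Lemma~\ref{2l3} gives $\hp(\eta f)=\pt H_{\phi\eta}f$. Since $\mathcal{S}_{(\eta)}(\eta f)=\eta Sf$, we get $\hp\mathcal{S}_{(\eta)}(\eta f)=\pt H_{\phi\eta}Sf$. The Hankel identity $H_\psi S=S^*H_\psi$ turns this into $\pt S^*H_{\phi\eta}f$. On the other side, $\hp(\eta f)\in\2$, so Lemma~\ref{3l1} gives $S_\theta^*\hp(\eta f)=S^*\pt H_{\phi\eta}f$. Writing $g=H_{\phi\eta}f$, the left-hand side of \eqref{3eq4} applied to $\eta f$ therefore equals
\[
(\pt S^*-S^*\pt)g .
\]

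\textbf{Step 2: the commutator (the main step).} Using $\pt|_{H^2}=I-T_\theta T_{\bar\theta}$, we have
\[
\pt S^*-S^*\pt=S^*T_\theta T_{\bar\theta}-T_\theta T_{\bar\theta}S^* .
\]
Two facts are needed.
\begin{itemize}
\item Co-analytic Toeplitz operators commute, so $T_{\bar\theta}S^*=S^*T_{\bar\theta}$.
\item A direct computation gives
\[
S^*T_\theta k-T_\theta S^*k=\frac{\theta k-\theta(0)k(0)}{z}-\frac{\theta\,(k-k(0))}{z}=k(0)\,S^*\theta .
\]
\end{itemize}
Taking $k=T_{\bar\theta}g$, we have $k(0)=\langle T_{\bar\theta}g,1\rangle=\langle g,\theta\rangle$. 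Hence
\[
(\pt S^*-S^*\pt)g=\langle g,\theta\rangle\, S^*\theta .
\]
This is a rank-one operator with range spanned by $S^*(\theta)$, which already proves the "at most rank one" claim.

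\textbf{Step 3: identify the second vector.} We have $\langle H_{\phi\eta}f,\theta\rangle=\langle f,H_{\phi\eta}^*\theta\rangle$, and $H_{\psi}^*=H_{\psi^*}$. Therefore
\[
H_{\phi\eta}^*\theta=P\f\bigl((\phi\eta)^*\theta\bigr).
\]
Evaluating pointwise gives
\[
\f\bigl((\phi\eta)^*\theta\bigr)(\xi)=\bar\xi\,\overline{\phi(\xi)\eta(\xi)}\,\theta(\bar\xi)=\bigl(\breve\theta\,\overline{z\phi\eta}\bigr)(\xi).
\]
So the second vector is exactly $\Phi=P(\breve\theta\,\overline{z\phi\eta})$. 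Combining the three steps,
\[
\bigl(\hp\mathcal{S}_{(\eta)}-S_\theta^*\hp\bigr)(\eta f)=\langle f,\Phi\rangle\,S^*(\theta),
\]
which is \eqref{3eq4}.

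Here the tensor product is read through the identification $\eta f\leftrightarrow f$ of $\eta H^2$ with $H^2$, the same convention used in the proof of Theorem~\ref{3th1}. Equivalently, the vector in $\eta H^2$ is $\eta\Phi$, since $\langle \eta f,\eta\Phi\rangle=\langle f,\Phi\rangle$. The only delicate points are the commutator computation in Step 2 and the bookkeeping of conjugations and flips in Step 3; the rest is formal.
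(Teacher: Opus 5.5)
Your proof is correct and follows essentially the same computation as the paper. Both arguments expand $\pt|_{H^2}=I-T_\theta T_{\bar\theta}$, use $H_\psi S=S^*H_\psi$, and extract the rank-one term from $S^*T_\theta-T_\theta S^*=S^*\theta\otimes e_0$. The only difference is that you commute $T_{\bar\theta}$ past $S^*$ to isolate the symbol-free identity $\pt S^*-S^*\pt=S^*\theta\otimes\theta$ on $H^2$. The paper instead absorbs $T_{\bar\theta}$ into $H_{\phi\eta\theta^*}$ and reads off $H^*_{\phi\eta\theta^*}e_0$, which is the same vector as your $H^*_{\phi\eta}\theta$. Your remark that $S^*(\theta)\otimes\Phi$ must be read through the identification $\eta f\leftrightarrow f$ (equivalently, with $\eta\Phi$ in place of $\Phi$) is correct, and it is the convention the paper uses implicitly.
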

		
		\begin{proof}
			First, observe that the action of $\hp $ on $\eta H^2$ is same as the action of $\h_{\phi\eta}$ on $H^2,$ and recall $\h_\psi= \pt H_\psi = (I-M_\theta P M_{\bar{\theta}})H_\psi$. So we have the following. 
			\begin{align*}
				&\h_{\phi\eta}\mathcal{S}_{(\eta)} - S_\theta^* \h_{\phi\eta}\\
				& = \h_{\phi\eta}S - S^* \h_{\phi\eta} \quad \text{ (by Lemma~\ref{3l1})}\\
				&=(H_{{\phi\eta}} - T_\theta H_{{\phi\eta}\theta^*}) S - S^* (H_{{\phi\eta}} - T_\theta H_{{\phi\eta}\theta^*})\\
				&=S^*T_\theta H_{{\phi\eta}\theta^*} -  T_\theta H_{{\phi\eta}\theta^*}S\\ 
				&=S^*T_\theta (SS^* + (e_0\otimes e_0)) H_{{\phi\eta}\theta^*} -  T_\theta H_{{\phi\eta}\theta^*}S\\
				&=S^*T_\theta S S^*  H_{{\phi\eta}\theta^*} + S^*T_\theta (e_0\otimes e_0) H_{{\phi\eta}\theta^*} -  T_\theta H_{{\phi\eta}\theta^*}S\\
				&=S T_\theta H_{{\phi\eta}\theta^*} +  S^*T_\theta (e_0\otimes e_0) H_{{\phi\eta}\theta^*} - T_\theta H_{{\phi\eta}\theta^*}S\\
				&=S^*T_\theta (e_0) \otimes H_{{\phi\eta}\theta^*}^*(e_0)\\
				&=S^*(\theta) \otimes P\f (\theta \phi^*\eta^*) = S^*(\theta) \otimes P(\breve{\theta}~\overline{z\phi\eta}).
			\end{align*}
			Thus, $ \hp \mathcal{S}_{(\eta)} -  S_\theta^*\hp = S^*(\theta) \otimes \Phi.$
		\end{proof}
		
		As a consequence of the above lemma, we get $ \hp \mathcal{S}_{(\eta)} -  S_\theta^*\hp = 0$ if and only if $\phi \in \breve{\theta}\bar{\eta}H^2$. Moreover, Theorem~\ref{2t1} assures a large class of non-zero RHOs satisfying $ \hp \mathcal{S}_{(\eta)} = S_\theta^*\hp $.
		\vspace{0.1in}
		
		The following lemma will be useful in establishing the converse part of the characterization of RHO.
		
		\begin{lemma}\label{3l3}
			Let $\phi\in \li$ and let $\eta, \theta$ be two inner functions. Suppose $H_{\phi\eta}$ is a Hankel operator on the Hardy-Hilbert space $H^2.$ Then the range of $\h_{\phi\eta}$ is contained in $\2$ if and only if $\phi \in \breve{\theta}\bar{\eta}H^\infty.$
		\end{lemma}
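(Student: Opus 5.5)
The plan is to interpret the statement as saying that the \emph{unprojected} Hankel operator $H_{\phi\eta}=P\f M_{\phi\eta}|_{H^2}$ has range in $\2$, i.e.\ $\h_{\phi\eta}=\pt H_{\phi\eta}$ coincides with $H_{\phi\eta}$. Since $\h_{\phi\eta}$ always lands in $\2$ by construction, this is the only reading with content. Because $H^2=\theta H^2\oplus\2$, the range of $H_{\phi\eta}$ lies in $\2$ exactly when it is orthogonal to $\theta H^2$. Equivalently, $P M_{\bar\theta}H_{\phi\eta}=T_{\bar\theta}H_{\phi\eta}=0$ on $H^2$. This is the same computation used in the proof of Theorem~\ref{3th1}: $\langle H_{\phi\eta}h,\theta f\rangle=\langle \bar\theta H_{\phi\eta}h,f\rangle$ for all $f\in H^2$.

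The key step is to recognise $T_{\bar\theta}H_{\phi\eta}$ as a single Hankel operator. I would check directly on $\mathbb{T}$ that multiplication by $\bar\theta$ passes through the flip: for $g\in L^2$,
$$\bar\theta(\xi)\,(\f g)(\xi)=\bar\xi\,\overline{\theta(\xi)}\,g(\bar\xi)=\bigl(\f(\theta^* g)\bigr)(\xi),$$
since $\theta^*(\bar\xi)=\overline{\theta(\xi)}$. Thus $M_{\bar\theta}\f=\f M_{\theta^*}$.

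To use this, I first need $M_{\bar\theta}P\f=PM_{\bar\theta}\f$ on the relevant vectors, which is not automatic. I would avoid the issue by writing $H_{\phi\eta}=\f(I-P)M_{\phi\eta}$ and computing $P M_{\bar\theta}\f(I-P)(\phi\eta f)$ directly. Alternatively, I would use the identity already used in Lemma~\ref{3l2}, namely $\pt H_\psi=H_\psi-T_\theta H_{\psi\theta^*}$, which gives $T_{\bar\theta}H_\psi=H_{\psi\theta^*}$ once one checks $T_{\bar\theta}T_\theta=I$. The second route is cleaner. Applying $T_{\bar\theta}$ to that identity gives
$$T_{\bar\theta}\pt H_{\psi}=T_{\bar\theta}H_\psi-H_{\psi\theta^*},$$
and $T_{\bar\theta}\pt=0$ because $\pt H^2\perp\theta H^2$. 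Hence $T_{\bar\theta}H_{\phi\eta}=H_{\phi\eta\theta^*}$.

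It follows that the range of $H_{\phi\eta}$ lies in $\2$ iff $H_{\theta^*\phi\eta}=0$, and a Hankel operator vanishes iff its symbol is in $H^\infty$. So the condition is $\theta^*\phi\eta\in H^\infty$. Multiplying by the unimodular function $\bar\eta\,\overline{\theta^*}=\bar\eta\breve\theta$, using $\overline{\theta^*(\xi)}=\theta(\bar\xi)=\breve\theta(\xi)$, this becomes $\phi\in\breve\theta\bar\eta H^\infty$. Every step is an equivalence, so both directions come together.

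The main obstacle I expect is the bookkeeping of the flip and the conjugations $\theta^*$, $\breve\theta$, $\bar\theta$. In particular I must make sure that the identity $\pt H_\psi=H_\psi-T_\theta H_{\psi\theta^*}$ from Lemma~\ref{3l2} is correct with $\theta^*$ rather than $\breve\theta$, so I would reverify it on monomials $f=z^n$ before relying on it.
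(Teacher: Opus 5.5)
Your proof is correct and is essentially the paper's argument. Both reduce to orthogonality of the range of $H_{\phi\eta}$ against $\theta H^2$, move $\bar\theta$ through the flip via $M_{\bar\theta}\f=\f M_{\theta^*}$, and conclude $\theta^*\phi\eta\in H^\infty$; the paper does this as a direct inner-product chain rather than through the operator identity $T_{\bar\theta}H_{\phi\eta}=H_{\theta^*\phi\eta}$.

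Two small remarks. Your worry about commuting $P$ is unnecessary, since $\langle P\f(\phi\eta h),\theta f\rangle=\langle \f(\phi\eta h),\theta f\rangle$ for $f\in H^2$. Also, the route through Lemma~\ref{3l2} saves nothing, because that lemma already uses the identity $T_{\bar\theta}H_\psi=H_{\psi\theta^*}$ without proof.
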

		\begin{proof}
			By the hypothesis, the range of $H_{\phi\eta}$ is contained in $\2$ if and only if $range(H_{\phi\eta}) \perp \theta H^2.$ Therefore, 
			\begin{align*}
				& \langle H_{\phi\eta}(h), \theta f \rangle=0 \text{ for all $f,h \in H^2$}\\
				&\iff \langle  \f (\phi\eta h) , {\theta} f \rangle =0 \\
				&\iff \langle \bar{\theta} \f (\phi\eta h) , f\rangle=0 \\
				&\iff \langle \f( \theta^* \phi\eta h) , f\rangle = 0 \text{ for all $f,h \in H^2$} \\
				& \iff \f( \theta^* \phi\eta h) \in \ho \text{ for all $h \in H^2$}\\
				& \iff \theta^* \phi\eta h \in H^2 \text{ for all $h \in H^2$}\\
				& \iff \theta^* \phi\eta H^2 \in H^2 \\
				& \iff \theta^* \phi\eta \in H^\infty \quad \text{($\psi H^2 \subset H^2 \iff \psi \in H^\infty$)}\\
				& \iff \phi \in \breve{\theta}\bar{\eta}H^\infty.
			\end{align*} This completes the proof.
		\end{proof}
		
		Recall that the Hankel operator $H_\phi$ acting on $\eta H^2$ can be identified with the Hankel operator $H_{\phi\eta}$ acting on $H^2$. Moreover, when $range(H_{\phi\eta}) \subset \2$, then 
		$$H_{\phi}(h)=P\f(\phi h)= \pt \f (\phi h)= \hp(h).$$
		In this case, it also evident that $S^* H_\phi = H_\phi S = \hp \mathcal{S}_{(\eta)} = S_\theta^* \hp$ (follows from the Lemma \ref{3l3}). We are now ready to prove Theorem \ref{3th2}.
		\vspace{0.1in}
		
		$\bullet$ {\bf{\emph{Proof of the Theorem \ref{3th2}:}}}
		\vspace{0.1in}
		
		The proof follows a similar line of argument as in Theorem \ref{3th1}. The first part has already been established in the Lemma \ref{3l2}, that is, the restricted Hankel operator $\hp$ satisfies $$\hp \mathcal{S}_{(\eta)} - S_\theta^*\hp = S^*({\theta}) \otimes P(\breve{\theta}~{\overline{z\eta\phi}}).$$
		For the converse part, let $X:\eta H^2\rightarrow\2$ be such that $$X\mathcal{S}_{(\eta)} - S_\theta^*X = S^*({\theta}) \otimes P(\breve{\theta}~{\overline{z\eta\phi}}),$$ for some $\phi\in \li.$
		Thus, we get,
		\begin{align*}
			& X\mathcal{S}_{(\eta)} - S_\theta^*X = \hp \mathcal{S}_{(\eta)} - S_\theta^*\hp\\
			& \iff (X-\hp)\mathcal{S}_{(\eta)} - S_\theta^*(X-\hp) = 0\\
			& \iff X-\hp = \h_\psi \quad \text{ for some $\psi \in \breve{\theta}\bar{\eta}H^\infty$ \quad (due to lemma \ref{3l3})}\\
			& \iff X = \h_{\phi+\psi}.
		\end{align*} 
		Therefore, $X$ is a restricted Hankel operator, hence proved. \hspace{1in} $\square$

		\begin{corollary}
			Let $\eta, \theta$ be two inner functions, and let $Y:\2 \rightarrow \eta H^2$ be a bounded linear operator. Then, $Y=h_\phi$ if and only if 
			$$ \mathcal{S}_{(\eta)}^*Y - Y S_\theta = P( \breve{\theta} \breve{\phi} \overline{z \eta}) \otimes S^*(\theta), \quad \phi\in\li. $$
		\end{corollary}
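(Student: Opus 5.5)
The plan is to take adjoints in Lemma~\ref{3l2} and Theorem~\ref{3th2}, in the same way the corollary for $\tau_\phi$ followed from \eqref{3eq1}. By \eqref{adj}, $h_\phi = \mathcal{H}_{\phi^*}^*$. The restricted Hankel operator $\mathcal{H}_{\phi^*}:\eta H^2\to\2$ satisfies, by Lemma~\ref{3l2} with symbol $\phi^*$,
$$\mathcal{H}_{\phi^*}\mathcal{S}_{(\eta)} - S_\theta^*\mathcal{H}_{\phi^*} = S^*(\theta)\otimes P(\breve{\theta}\,\overline{z\phi^*\eta}).$$
Taking adjoints and using $(x\otimes y)^* = y\otimes x$ gives
$$\mathcal{S}_{(\eta)}^*h_\phi - h_\phi S_\theta = P(\breve{\theta}\,\overline{z\phi^*\eta})\otimes S^*(\theta).$$
Next we simplify the symbol. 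Since $\phi^*(\xi)=\overline{\phi(\bar\xi)}$, we have $\overline{\phi^*}=\breve{\phi}$, and so $\breve{\theta}\,\overline{z\phi^*\eta}=\breve{\theta}\breve{\phi}\,\overline{z\eta}$. This is the stated right-hand side, which proves the forward direction.

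For the converse, suppose $Y:\2\to\eta H^2$ is bounded and satisfies the identity for some $\phi\in\li$. Put $X=Y^*:\eta H^2\to\2$. Taking adjoints of the hypothesis turns it into
$$X\mathcal{S}_{(\eta)} - S_\theta^* X = S^*(\theta)\otimes P(\breve{\theta}\,\overline{z\eta\phi^*}),$$
using the same symbol identity in reverse. Subtracting the identity for $\mathcal{H}_{\phi^*}$ yields $(X-\mathcal{H}_{\phi^*})\mathcal{S}_{(\eta)} = S_\theta^*(X-\mathcal{H}_{\phi^*})$. By the argument in the proof of Theorem~\ref{3th2}, which rests on Nehari's theorem and Lemma~\ref{3l3}, we get $X-\mathcal{H}_{\phi^*}=\mathcal{H}_\psi$ for some $\psi\in\breve{\theta}\bar{\eta}H^\infty$. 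Hence $X=\mathcal{H}_{\phi^*+\psi}$, and therefore $Y = \mathcal{H}_{\phi^*+\psi}^* = h_{(\phi^*+\psi)^*}$.

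The main obstacle is that the "if" direction does not single out $\phi$ itself. We only get $Y=h_{\phi'}$ with $\phi'=\phi+\psi^*$, and the extra term $\psi$ produces a restricted Hankel operator whose range lies in $\2$ but which need not vanish. So I would read the corollary the way Theorem~\ref{3th2} is worded: the identity holds for some $\phi\in\li$ if and only if $Y$ is of the form $h_{\phi'}$. To obtain equality with the given $h_\phi$ one would need $\mathcal{H}_\psi=0$, that is $\psi\in\bar\eta H^\infty$ by Theorem~\ref{2t1}(ii), and this does not follow from the identity alone. I would state this caveat explicitly, and otherwise just check the routine conjugation identities ($\overline{\phi^*}=\breve\phi$, and $(\breve{\theta})^{*}$ versus $\bar\theta$) when passing between the two forms.
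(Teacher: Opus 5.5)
Your forward direction is the paper's proof: take adjoints in \eqref{3eq4}, using $\mathcal{H}_\phi^* = h_{\phi^*}$ and $\overline{\phi^*}=\breve{\phi}$. Your caveat about the converse is also correct, and the paper's one-line proof does not address it: for nonconstant $\theta$, $\mathcal{H}_{\breve{\theta}\bar{\eta}}$ has vanishing right-hand side in \eqref{3eq4} (there $\Phi=P(\bar z)=0$) yet $\mathcal{H}_{\breve{\theta}\bar{\eta}}(\eta)=S^*(\theta)\neq 0$, so $Y=h_\phi+h_{\bar{\theta}\breve{\eta}}$ satisfies the identity with the given $\phi$ but $Y\neq h_\phi$, and only a ``for some $\phi$'' version, as in Theorem~\ref{3th2}, can be proved.
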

		\begin{proof}
			The result follows from the equation \eqref{3eq4} and the relation $\hp^* = h_{\phi^*}$.
		\end{proof}

		\section{Small and Big Truncated Toeplitz operators}
		
		This section contains the study of zero, finite rank, and compactness of small and big truncated Toeplitz operators, defined in \ref{Def}. In this study, we make use of the previously defined operators $\tau_\phi$, $h_\phi$ (see \eqref{1} and \eqref{2}) together with the results established earlier in Section 2. We begin with the study of STTO by decomposing it as a sum of two operators as follows.
		\begin{lemma}\label{4l1}
			For $\phi \in L^\infty$, the small truncated Toeplitz operator $t_\phi^\theta$ admits the representation $$ t_\phi^\theta = \tau_\phi +  M_{\bar{\theta}} \f h_{\theta \phi}.$$
		\end{lemma}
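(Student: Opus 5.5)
The plan is to compute the projection $I-P_{\bar{\theta}}$ explicitly as a sum of two orthogonal projections, and then recognize each piece as one of the operators $\tau_\phi$ and $h_{\theta\phi}$ from \eqref{1} and \eqref{2}. Here the Beurling inner function $\eta$ is taken to be a unimodular constant (normalized to $1$), so that $\eta H^2=H^2$. This is the only reading under which the codomain $(\overline{z\2})^\perp=\bar\theta\ho\oplus H^2$ of $t_\phi^\theta$ matches the sum of the ranges on the right-hand side. With this choice, $\tau_\phi(f)=P(\phi f)$ and $h_{\theta\phi}(f)=P\f(\theta\phi f)$ for $f\in\2$.

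\emph{Step 1: an orthogonal decomposition of $L^2$.} Since $\f$ is a unitary involution, it maps the decomposition $H^2=\1\oplus\2$ onto $\ho=\f(\1)\oplus\f(\2)$. We have $\f(\theta H^2)=\breve{\theta}\,\ho$ up to the obvious identification. It is more convenient to argue directly: multiplication by $\bar\theta$ is unitary on $L^2$ and maps $\ho$ onto $\bar\theta\ho$, and
\[
\ho=\bar\theta\ho\oplus\overline{z\2}.
\]
This identity is the conjugate of $H_0^2=\theta H_0^2\oplus z\2$, which follows from $H^2=\theta H^2\oplus\2$ after multiplying by $z$. Hence
\[
L^2=H^2\oplus\bar\theta\ho\oplus\overline{z\2},
\]
and therefore
\[
I-P_{\bar\theta}=P+P_{\bar\theta\ho}.
\]

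\emph{Step 2: the projection onto $\bar\theta\ho$.} Because $M_{\bar\theta}$ is unitary with inverse $M_\theta$, we get $P_{\bar\theta\ho}=M_{\bar\theta}QM_\theta$, where $Q=I-P$. Using $\f P\f=Q$, recalled in Section~2, this becomes $M_{\bar\theta}\f P\f M_\theta$.

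\emph{Step 3: conclusion.} For $f\in\2$ we obtain
\[
t_\phi^\theta f=P(\phi f)+M_{\bar\theta}\f P\f(\theta\phi f)=\tau_\phi f+M_{\bar\theta}\f\, h_{\theta\phi}f,
\]
which is the asserted representation. The two summands land in $H^2$ and $\bar\theta\ho$ respectively, consistent with Step 1.

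The only step requiring care is Step 1. There one must check that $\bar\theta\ho$ and $\overline{z\2}$ are orthogonal and together span $\ho$; this follows by conjugating and multiplying the Beurling decomposition by $z$. Everything else is formal manipulation with the unitary operators $M_{\bar\theta}$ and $\f$.
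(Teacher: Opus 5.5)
Your proof is correct and is essentially the paper's argument. The paper also writes $I-P_{\bar\theta}=P+M_{\bar\theta}QM_\theta$ (from $P_{\bar\theta}=Q-M_{\bar\theta}QM_\theta$) and uses $Q=\f P\f$ to recognize $\tau_\phi$ and $M_{\bar\theta}\f h_{\theta\phi}$, with $\eta$ tacitly equal to $1$ as you make explicit; the only difference is that you justify the projection identity via the orthogonal decomposition $\ho=\bar\theta\ho\oplus\overline{z\2}$, which the paper simply asserts.
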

		\begin{proof}
			By definition, the small truncated Toeplitz operator $t_\phi^\theta: \2 \rightarrow (\overline{z\2})^\perp$ is defined by $t_\phi^\theta = (I-P_{\bar{\theta}})M_\phi|_{\2},$ for $\phi\in\li.$ Since the orthogonal projection $P_{\bar{\theta}}: L^2 \rightarrow \overline{z\2}$ can be expressed as $$P_{\bar{\theta}} = Q-M_{\bar{\theta}} Q M_\theta \quad \text{ (where $Q=I-P$) },$$ we have the following
			\begin{align*}
				t_\phi^\theta & = (I-P_{\bar{\theta}})M_\phi|_{\2}\\
				& = \Bigl(I-\left( Q- M_{\bar{\theta}} Q M_\theta \right) \Bigr) M_\phi|_{\2}\\
				& = (I-Q)M_\phi|_{\2} +  M_{\bar{\theta}} Q M_\theta M_\phi|_{\2}\\
			\end{align*}
			Using the identity $Q=\f P \f$, we obtain
			\begin{align*}
				& t_\phi^\theta = \tau_\phi + M_{\bar{\theta}} \f P\f M_{\theta \phi}|_{\2}\\
				&= \tau_\phi +  M_{\bar{\theta}} \f h_{\theta \phi}.
			\end{align*}
		\end{proof}
		
			
			We now state the following well-known result, which will be useful for the upcoming theorems.
			
			\begin{lemma}\label{4}
				Let $\h$ be a Hilbert space, $U$ be an isometry, and $A:\h\rightarrow\h$ be a bounded operator. Then $UA$ is zero, finite rank, and compact if and only if $A$ is zero, finite rank, and compact, respectively.
			\end{lemma}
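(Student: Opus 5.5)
The whole argument rests on the fact that an isometry preserves norms. Since $U$ is an isometry, $U^*U=I$ and $\|Ux\|=\|x\|$ for all $x\in\h$. I will treat the three properties one at a time. In each case the direction ``$A$ has the property $\Rightarrow$ $UA$ has it'' is immediate, because the zero operators, the finite-rank operators and the compact operators each form a left ideal under composition with bounded operators. The content is in the converse, and the basic tool there is the identity $A=U^*(UA)$.

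\emph{Zero.} If $UA=0$, then for every $x\in\h$ we have $\|Ax\|=\|UAx\|=0$. Hence $A=0$.

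\emph{Finite rank.} Since $U$ is injective (indeed $\|Uy\|=\|y\|$), it restricts to a linear bijection from $\operatorname{range}(A)$ onto $\operatorname{range}(UA)=U(\operatorname{range}A)$. So $\dim\operatorname{range}(UA)=\dim\operatorname{range}(A)$, and $UA$ has finite rank exactly when $A$ does. Alternatively, one can write $A=U^*(UA)$ and use the ideal property again.

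\emph{Compactness.} If $UA$ is compact, then $A=U^*UA$ is compact, since it is a bounded operator times a compact one. A direct argument also works: for a bounded sequence $(x_n)$, a subsequence makes $(UAx_n)$ Cauchy, and then $\|Ax_n-Ax_m\|=\|UAx_n-UAx_m\|$ shows $(Ax_n)$ is Cauchy as well.

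The only point that needs care is that in the intended applications $U$ acts between different spaces. For example, $U=M_{\bar\theta}\f$ maps $\eta H^2$ or $L^2$ into $L^2$ and is not an operator on a single $\h$. The argument above never uses that domain and codomain coincide, so I will remark that the lemma holds verbatim for any isometry $U:\mathcal K_1\to\mathcal K_2$ and any bounded $A:\h\to\mathcal K_1$, with $U^*$ understood as the Hilbert-space adjoint $\mathcal K_2\to\mathcal K_1$. This is the form in which it will be applied to $M_{\bar\theta}\f h_{\theta\phi}$ in Lemma~\ref{4l1}. There is no real obstacle beyond stating this generality explicitly.
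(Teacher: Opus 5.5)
Your proof is correct. The paper calls this lemma well known and gives no argument, and your three steps are exactly the standard justification it relies on: norm preservation for zero, injectivity of $U$ for finite rank, and the factorization $A=U^*(UA)$ for compactness. Keep your remark that the argument works when $U$ is an isometry between different Hilbert spaces, since that is how the paper actually uses the lemma (e.g.\ $M_{\bar\theta}\mathcal{J}$ applied to $h_{\theta\phi}$ in Lemma~\ref{4l1} and $M_\theta$ applied to $h_{\phi\theta^*}$ in Lemma~\ref{4l2}).
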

			
			Since $M_{\bar{\theta}}\f$ is an isometry, it follows from the above lemma that STTO $t_\phi^\theta$ is zero, finite rank, and compact if and only if both $\tau_\phi$ and $h_{\theta \phi}$ are zero, finite rank, and compact together.

			\begin{theorem}\label{4th1}
				The small truncated Toeplitz operator $t_\phi^\theta$ 
				\begin{enumerate}[label=(\roman*)]
					\item is zero if and only if $\phi$ is a constant multiple of $\bar{\theta},$ 
					\vspace{0.1in}
					
					\item is finite in rank if and only if either $\theta$ is a finite Blaschke product or $\phi = \bar{\theta}(q_1+\bar{q_2}),$ where $q_1,q_2$ are the rational functions having poles outside the closure of $\mathbb{D}$,\label{41}
					\vspace{0.1in}
					
					\item is compact if and only if for each support set $S_m$ either $\bar{\theta}|_{S_m}$ is constant or $\phi|_{S_m} = \bar{\theta}|_{S_m}.$
				\end{enumerate}
			\end{theorem}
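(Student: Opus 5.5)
The plan is to use the decomposition of Lemma~\ref{4l1}, namely $t_\phi^\theta=\tau_\phi+M_{\bar\theta}\f h_{\theta\phi}$, with $\eta\equiv 1$. In that case $\tau_\phi$ and $h_{\theta\phi}$ are the operators of \eqref{1} and \eqref{2} with $\eta H^2=H^2$.

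\emph{Step 1: splitting the problem.} The range of $\tau_\phi$ lies in $H^2$. The range of $M_{\bar\theta}\f h_{\theta\phi}$ lies in $\bar\theta\f H^2=\bar\theta\ho$. These two subspaces are orthogonal, and together they make up $(\overline{z\2})^\perp$. Hence $t_\phi^\theta$ is zero, of finite rank, or compact if and only if both summands are. By Lemma~\ref{4}, since $M_{\bar\theta}\f$ is an isometry, this is the same as asking it of $\tau_\phi$ and $h_{\theta\phi}$. Each part of the theorem then reduces to intersecting the conditions of Corollaries~\ref{2c1} and~\ref{2c2} with $\eta=1$. Throughout I write $u=\theta\phi$.

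\emph{Step 2: zero and finite rank.}
\begin{enumerate}[label=(\alph*)]
\item \emph{Zero.} Corollary~\ref{2c1}(i) gives $\phi\in\overline{\theta H^\infty}$, that is, $\bar u\in H^\infty$. Corollary~\ref{2c2}(i) gives $u\in H^\infty$. A function that is both analytic and co-analytic is constant, so $u=c$ and $\phi=c\bar\theta$. The converse is immediate, since $\tau_{c\bar\theta}$ and $h_{c}$ both vanish on $\2$.
\item \emph{Finite rank, $\theta$ not a finite Blaschke product.} Corollary~\ref{2c1}(ii) gives $\bar u\in H^\infty+\mathcal R$. Corollary~\ref{2c2}(ii) gives $u^*\in H^\infty+\mathcal R$.
\begin{itemize}
\item The map $f\mapsto f^*$ conjugates the Fourier coefficients. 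It therefore preserves $H^\infty$ and preserves rational functions with poles in $\mathbb D$, so the second condition is equivalent to $u\in H^\infty+\mathcal R$.
\item By Kronecker's theorem (Lemma~\ref{2l4}), $u\in H^\infty+\mathcal R$ means $H_u$ has finite rank. Equivalently, $Qu$ is the conjugate of a rational function analytic on a neighbourhood of $\overline{\mathbb D}$; this is the standard form of the co-analytic part of a Kronecker symbol.
\item Applying the same reasoning to $\bar u$ shows that $Pu$ is rational with poles outside $\overline{\mathbb D}$.
\item Hence $u=q_1+\bar q_2$ and $\phi=\bar\theta(q_1+\bar q_2)$. The converse runs the same chain backwards.
\end{itemize}
\end{enumerate}
The one point needing care is translating ``$H_u$ has finite rank'' into ``$Qu$ is conjugate-rational with poles outside the closed disk''. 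That is the classical explicit form of Kronecker's theorem, and I would cite it rather than reprove it.

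\emph{Step 3: compactness.} Fix a support set $S_m$. By Corollaries~\ref{2c1}(iii) and~\ref{2c2}(iii), either $\bar\theta|_{S_m}\in H^\infty|_{S_m}$, or both $\bar u|_{S_m}\in H^\infty|_{S_m}$ and $u|_{S_m}\in H^\infty|_{S_m}$ hold.
\begin{itemize}
\item I will use that support sets are antisymmetric for $H^\infty$: a real-valued function in $H^\infty|_{S_m}$ is constant. If $u|_{S_m}$ and $\bar u|_{S_m}$ both lie in $H^\infty|_{S_m}$, then $\operatorname{Re}u|_{S_m}$ and $\operatorname{Im}u|_{S_m}$ are real functions there, hence constant. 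So $\phi|_{S_m}=c\,\bar\theta|_{S_m}$.
\item In the first alternative, $\theta|_{S_m}$ and $\bar\theta|_{S_m}$ both lie in $H^\infty|_{S_m}$, and the same argument shows $\bar\theta|_{S_m}$ is constant.
\end{itemize}
This recovers the stated dichotomy, with the equality $\phi|_{S_m}=\bar\theta|_{S_m}$ read up to a constant multiple, consistent with part (i). The converse follows by reading the equivalences backwards.

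I expect the main obstacle to be this compactness step. It requires justifying antisymmetry of support sets, a standard fact from the theory of $\mathcal M(H^\infty)$. It also requires checking that the per-support-set alternatives from the two corollaries can be combined pointwise in $m$ rather than globally; this holds because each corollary is itself stated support set by support set.
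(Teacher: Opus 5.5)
Your proposal is correct and follows essentially the same route as the paper: the decomposition $t_\phi^\theta=\tau_\phi+M_{\bar\theta}\mathcal{J}h_{\theta\phi}$ of Lemma~\ref{4l1}, the reduction to $\tau_\phi$ and $h_{\theta\phi}$ via Lemma~\ref{4}, and the intersection of the conditions of Corollaries~\ref{2c1} and~\ref{2c2} with $\eta=1$, taken support set by support set. Your added details are sound: the orthogonality of the two ranges, the passage from $(\theta\phi)^*$ to $\theta\phi$ (where the paper's proof has a typo), and the antisymmetry argument. Your reading of (iii) as ``$\phi|_{S_m}$ is a constant multiple of $\bar\theta|_{S_m}$'' is exactly what the paper's own argument yields, since it concludes that $\theta\phi|_{S_m}$ is constant; the literal equality in the statement should be understood that way.
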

			\begin{proof}
				The foregoing discussion shows that the STTO $t_\phi^\theta=0$ if and only if both $\tau_\phi =0 $ and $h_{\theta \phi} = 0$. And Corollaries \ref{2c1} and \ref{2c2} yield that $\tau_\phi =0 $ if and only if $ \phi \in \overline{\theta H^\infty}$, whereas $h_{\theta \phi} = 0$ if and only if $\phi \in \bar{\theta}H^\infty$. Thus, the result follows. 
				\vspace{0.1in}
				
				On a similar note, $t_\phi^\theta$ is finite in rank if and only if $\tau_\phi$ and $h_{\theta \phi}$ both are of finite rank. Due to Corollary \ref{2c1}, $\tau_\phi$ is of finite rank if and only if either $\theta$ is finite Blaschke product or $\phi \in \overline{\theta(H^\infty+\mathcal{R}}) \iff \phi\theta \in \overline{(H^\infty+\mathcal{R}})$. And Corollary \ref{2c2} gives that $h_{\theta \phi}$ is of finite rank if and only if either $\theta$ is a finite Blaschke product or $\phi \in (H^\infty+\mathcal{R})$. Therefore, STTO $t_\phi^\theta$ is finite in rank if and only if either $\theta$ is a finite Blaschke product or $\phi = \bar{\theta}(q_1+\bar{q_2}),$ where $q_1,q_2$ are as described in \ref{41}.
				\vspace{0.1in}
				
				The proof of compactness follows analogously. The compactness of $\tau_\phi$ follows from Corollary~\ref{2c1} (iii), that is, for each support set $S_m$, either $\bar{\theta}|_{S_m} \in H^\infty |_{S_m}$ or $\bar{\phi}|_{S_m} \in  \theta H^\infty|_{S_m}$, which implies $\overline{\theta \phi}|_{S_m} \in  H^\infty|_{S_m}$. And Corollary \ref{2c2} show that $h_{\theta \phi}$ is compact if and only if either $\bar{\theta}|_{S_m} \in H^\infty |_{S_m}$ or $\theta \phi |_{S_m} \in  H^\infty |_{S_m}$. Since $\theta$ is an inner function, we have $\theta|_{S_m} \in H^\infty|_{S_m}$. Therefore, combining $\theta|_{S_m} \in H^\infty|_{S_m}$ and $\bar{\theta}|_{S_m} \in H^\infty |_{S_m}$ gives $\bar{\theta}|_{S_m}$ is constant, whereas $\overline{\theta \phi}|_{S_m} \in  H^\infty|_{S_m}$ and $\theta \phi |_{S_m} \in  H^\infty |_{S_m}$ yield $\theta \phi |_{S_m}$ is constant. Hence, the result follows.
			\end{proof}
			
			Note that in Lemma \ref{4l1}, we presented STTO as a combination of $\tau_\phi$ and $h_{\theta \phi}$. On the other hand, the same conclusions about $t_\phi^\theta$ can also be drawn by following the approach of Ma--Yan--Zheng \cite{MYZ}. To this end, we decompose the STTO in terms of classical Hankel operators and their adjoints as follows:
			\begin{align*}
				t_\phi^\theta = & (I-Q)M_\phi|_{\2} +  M_{\bar{\theta}} Q M_\theta M_\phi|_{\2}\\
				& = PM_\phi|_{\2} +  M_{\bar{\theta}}\ch_{\phi\theta}|_{\2}\\
				& = \ch_{\overline{\phi\theta}}^* \ch_{\bar{\theta}} +  M_{\bar{\theta}}\ch_{\phi\theta}\ch_{\bar{\theta}}^* \ch_{\bar{\theta}}.
			\end{align*}
			Now, by applying Lemma~2.2 of \cite{MYZ}, it follows that the vanishing, finite-rank, and compactness properties of STTO $t_\phi^\theta$ are determined by the corresponding properties of the operators  $$\ch_{\overline{\phi\theta}}^* \ch_{\bar{\theta}} \quad \text{and} \quad\ch_{\bar{\theta}}^*\ch_{\phi\theta},$$
			respectively. Hence, the results of Theorem \ref{4th1} can also be recovered from this.
			
			On a similar note, the big truncated Hankel operator $H_\phi^\theta$ can be expressed as $$ H_\phi^\theta = \f h_{\phi} + M_\theta \tau_{\bar{\theta}\phi} .$$ 
			Consequently, by Lemma~\ref{4} together with Corollaries~\ref{2c1} and \ref{2c2}, the Theorems~1.1, 1.2, and 1.3 of \cite{MYZ} can be restored.
			\vspace{0.1in}
			
			Next, we conduct the same study of compactness of all kinds for the big truncated Toeplitz operators. We begin by breaking down the BTTO into its constituent parts.
			
			\begin{lemma}\label{4l2}
				The big truncated Toeplitz operator $T_\phi^\theta: \2 \rightarrow(\2)^\perp,$ defined by $T_\phi^\theta(f) = (I-\pt)\f (\phi f)$, for $\phi\in \li,$ admits the representation $\f \tau_{\phi} + M_\theta h_{\phi\theta^*}$.
			\end{lemma}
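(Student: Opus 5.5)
The plan is to argue exactly as in Lemma~\ref{4l1}: decompose the projection $I-\pt$ into pieces built from $P$, $Q$ and multiplication by $\theta$. I then recognise each resulting piece as one of the operators $\tau_\psi$, $h_\psi$ from \eqref{1} and \eqref{2}. Throughout, these are taken with the Beurling factor $\eta\equiv 1$, so that $P_{\eta H^2}=P$ and $\tau_\psi=PM_\psi|_{\2}$, $h_\psi=P\f M_\psi|_{\2}$ map $\2$ into $H^2$.

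\textbf{Step 1 (split the projection).} Since $\pt=P-M_\theta PM_{\bar\theta}$ on $L^2$, we have
$$I-\pt=(I-P)+M_\theta PM_{\bar\theta}=Q+M_\theta PM_{\bar\theta}.$$
Hence, for $f\in\2$,
$$T_\phi^\theta f=Q\f(\phi f)+M_\theta P M_{\bar\theta}\f(\phi f).$$

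\textbf{Step 2 (the first term).} Using $\f P\f=Q$ and $\f^2=I$, we get $Q\f=\f P$. Therefore
$$Q\f(\phi f)=\f P(\phi f)=\f\tau_\phi f.$$
Its range lies in $\f H^2=\ho$, as it should.

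\textbf{Step 3 (the second term).} Here I need a commutation rule for $M_{\bar\theta}$ past the flip. For $g\in L^2$ and $\xi\in\mathbb T$,
$$(M_{\bar\theta}\f g)(\xi)=\overline{\theta(\xi)}\,\bar\xi\, g(\bar\xi).$$
Since $\theta^*(\bar\xi)=\overline{\theta(\xi)}$, this equals $\bar\xi\,(\theta^*g)(\bar\xi)=\f(\theta^* g)(\xi)$. Thus $M_{\bar\theta}\f=\f M_{\theta^*}$, and so
$$M_\theta PM_{\bar\theta}\f(\phi f)=M_\theta P\f(\phi\theta^* f)=M_\theta h_{\phi\theta^*}f.$$
Its range lies in $\theta H^2$.

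\textbf{Conclusion.} Adding the two pieces gives
$$T_\phi^\theta=\f\tau_\phi+M_\theta h_{\phi\theta^*},$$
with the two summands landing in the orthogonal pieces $\ho$ and $\theta H^2$ of $(\2)^\perp$. This orthogonality is what will later allow Lemma~\ref{4}, with the isometries $\f$ and $M_\theta$, to be applied termwise.

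The only delicate point is the identity $M_{\bar\theta}\f=\f M_{\theta^*}$ in Step 3, specifically getting the conjugation and the reflection $\xi\mapsto\bar\xi$ in $\theta^*$ right. Everything else is a formal manipulation of projections.
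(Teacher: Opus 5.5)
Your proposal is correct and follows the paper's proof: split $I-\pt=(I-P)+M_\theta PM_{\bar\theta}$, use $(I-P)\f=\f P$, and commute $M_{\bar\theta}$ past the flip as $M_{\bar\theta}\f=\f M_{\theta^*}$ to get $\f\tau_\phi+M_\theta h_{\phi\theta^*}$. You also make explicit two things the paper leaves implicit: that $\eta\equiv 1$ here, and the flip identity. Your remark that the two summands have orthogonal ranges in $\ho$ and $\theta H^2$ is a useful addition, since the paper's later use of Lemma~\ref{4} tacitly relies on it.
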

			\begin{proof}
				By definition, we have $T_\phi^\theta(f) = (I-\pt)\f (\phi f)$. From this, we derive the following decomposition
				\begin{align*}
					T_\phi^\theta |_{\2} &= (I-\pt)\f M_\phi|_{\2}\\
					& = \Bigl(I - ( P - M_\theta PM_{\bar{\theta}} )\Bigr)\f M_\phi|_{\2} \\
					& = (I-P) \f M_\phi|_{\2} + M_\theta PM_{\bar{\theta}}\f M_\phi|_{\2} \\
					& = \f P M_\phi|_{\2} + M_\theta P\f M_{\phi{\theta}^*}|_{\2} \\
					& = \f \tau_{\phi} + M_\theta h_{\phi\theta^*}.
				\end{align*}
			\end{proof}

			Since $\f$ is a unitary operator and $M_\theta$ is an isometry, we conclude that $T_\phi^\theta$ acting on $\2$ is zero, finite rank, and compact if and only if both $\tau_{\phi}$ and $h_{\phi\theta^*}$ are simultaneously zero, finite rank, and compact, due to Lemma \ref{4}. Thus, we arrive at the following results.
			\begin{theorem}
				The big truncated Toeplitz operator $T_\phi^\theta$ is 
				\begin{enumerate}[label=(\roman*)]
					\item zero if and only if $\phi\in\overline{\theta H^\infty}\cap \breve{\theta}H^\infty,$
					\item of finite in rank if and only if $\phi \in \overline{\theta(H^\infty+\mathcal{R})}\cap \breve{\theta}(H^\infty + \mathcal{R}).$
				\end{enumerate}
			\end{theorem}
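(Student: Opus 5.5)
The plan is to use Lemma~\ref{4l2}, $T_\phi^\theta = \f \tau_{\phi} + M_\theta h_{\phi\theta^*}$, together with the remark following it. The two summands have orthogonal ranges: $\f\tau_\phi$ maps into $\f(H^2)=\ho$, and $M_\theta h_{\phi\theta^*}$ maps into $\theta H^2$. Write $V:=\f P_{H^2} + M_\theta P_{H^2}$, understood on $H^2\oplus H^2\to \ho\oplus\theta H^2$. Then $T_\phi^\theta$ is unitarily equivalent to the column operator $\begin{pmatrix}\tau_\phi\\ h_{\phi\theta^*}\end{pmatrix}$, where here $\eta\equiv 1$, so both $\tau_\phi$ and $h_{\phi\theta^*}$ map $\2$ into $H^2$. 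A column operator is zero or of finite rank exactly when each entry is. For the rank statement, $\operatorname{rank}\max\le\operatorname{rank}(\text{column})\le$ the sum of the two ranks, and Lemma~\ref{4} covers the isometric factors. Hence $T_\phi^\theta$ is zero, respectively of finite rank, if and only if both $\tau_\phi$ and $h_{\phi\theta^*}$ are.

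Next I specialize the corollaries with $\eta=1$.

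For (i), Corollary~\ref{2c1}(i) gives $\tau_\phi=0\iff \phi\in\overline{\theta H^\infty}$. Corollary~\ref{2c2}(i) gives $h_{\psi}=0\iff \psi\in H^\infty$ when $\eta=1$; concretely, $h_\psi=P\f M_\psi|_{\2}$, and by Lemma~\ref{2l3}-type reasoning this vanishes iff $\psi$ lies in the appropriate space. Applying this with $\psi=\phi\theta^*$, I need to convert $\phi\theta^*\in H^\infty$ into a condition on $\phi$. Since $\theta^*=\overline{\theta(\bar z)}=\overline{\breve\theta}$ and $|\breve\theta|=1$, this gives $\phi\in \breve\theta H^\infty$. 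Intersecting the two conditions gives (i).

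For (ii), Corollary~\ref{2c1}(ii) says $\tau_\phi$ has finite rank iff $\phi\in\overline{\theta(H^\infty+\mathcal R)}$ or $\theta$ is a finite Blaschke product. Corollary~\ref{2c2}(ii), applied to $h_{\phi\theta^*}$, requires $(\phi\theta^*)^*=\phi^*\theta\in H^\infty+\mathcal R$, or again that $\theta$ is a finite Blaschke product. I will translate $\phi^*\theta\in H^\infty+\mathcal R$ by applying the involution $*$, using that $*$ maps $H^\infty$ and $\mathcal R$-type symbols appropriately, to get the stated form $\phi\in\breve\theta(H^\infty+\mathcal R)$, in the same convention as (i).

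The main obstacle is bookkeeping rather than substance. First, the involutions $\breve{\ }$, ${}^*$ and conjugation must be consistently matched with the conventions of Corollary~\ref{2c2}, whose part (i) is phrased with $\breve\eta$ while part (ii) is phrased with $\phi^*$. I would recheck this directly from $h_\psi=P\f M_\psi|_{\2}$ rather than trusting the corollary's notation. Second, there is the finite-Blaschke alternative. When $\theta$ is a finite Blaschke product, $\2$ is finite-dimensional, so $T_\phi^\theta$ automatically has finite rank. In that case the intersection condition in (ii) should be read as including this degenerate case, or one checks that it holds trivially. I would note this explicitly, since both corollaries contribute the same alternative.
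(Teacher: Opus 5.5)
Your proposal follows the paper's proof. It uses Lemma~\ref{4l2}, reduces to $\tau_\phi$ and $h_{\phi\theta^*}$ separately, and then applies Corollaries~\ref{2c1} and \ref{2c2} with $\eta=1$. Your translation $(\phi\theta^*)^*=\phi^*\theta\in H^\infty+\mathcal R\iff\phi\in\breve{\theta}(H^\infty+\mathcal R)$ is fine, because $\psi\mapsto\psi^*$ only conjugates Fourier coefficients and so preserves $H^\infty+\mathcal R$. In one place you are more careful than the paper. Lemma~\ref{4} alone does not give the ``only if'' direction for a \emph{sum*}, and your observation that the two summands have ranges in the orthogonal subspaces $\ho$ and $\theta H^2$ is exactly what is needed.

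The point you leave open has to be settled, and only your first option is right. The intersection condition does \emph{not} hold trivially when $\theta$ is a finite Blaschke product. Take $\theta=z$, so $\breve{\theta}=\bar z$ and $\breve{\theta}(H^\infty+\mathcal R)=H^\infty+\mathcal R$, and let $\phi=\bar u$ with $u$ a singular inner function. Since $\dim K_z=1$, $T^z_{\bar u}$ has rank at most one. However, $\bar u\notin H^\infty+\mathcal R$ by Kronecker's theorem, because $\widehat{H}_{\bar u}$ has kernel $uH^2$ and hence infinite rank. So (ii) as stated is false in this case. What your argument proves is: $T_\phi^\theta$ has finite rank if and only if $\theta$ is a finite Blaschke product or $\phi\in\overline{\theta(H^\infty+\mathcal R)}\cap\breve{\theta}(H^\infty+\mathcal R)$. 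The paper's argument proves the same once you restore the alternative it silently drops when quoting Corollaries~\ref{2c1}(ii) and \ref{2c2}(ii). Likewise, (i) needs $\theta$ nonconstant, as in Theorem~\ref{2t1}: for constant $\theta$ we have $K_\theta=\{0\}$, while $\overline{\theta H^\infty}\cap\breve{\theta}H^\infty=\mathbb C$.
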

			
			\begin{proof}
				Based on the preceding discussion, we have that the BTTO $T_\phi^\theta=0$ if and only if $\tau_{\phi}=0$ and $h_{\phi\theta^*}=0.$ Now, the Corollaries \ref{2c1} and \ref{2c2} yield that $\tau_\phi=0 \iff \phi \in \overline{\theta H^\infty}$. On the other hand, $h_{\phi\theta^*}=0 \iff {\phi\theta^*} \in H^\infty\iff \phi\in \breve{\theta}H^\infty.$ Hence, the result follows.
				\vspace{0.1in}
				
				Similarly, it follows from Corollary \ref{2c1} that the operator $\tau_\phi$ is of finite rank if and only if $\phi\in \overline{\theta(H^\infty+\mathcal{R})}$. Furthermore, by Corollary \ref{2c2}, we also have $h_{\phi\theta^*}$ is of finite rank if and only if $\phi\theta^* \in H^\infty + \mathcal{R}\iff \phi \in \breve{\theta}(H^\infty + \mathcal{R})$. Hence, the BTTO is finite in rank if and only if $\phi \in \overline{\theta(H^\infty+\mathcal{R})}\cap \breve{\theta}(H^\infty + \mathcal{R}).$
			\end{proof}
			
			The following theorem characterizes the compactness of the BTTO.
			\begin{theorem}
				The big truncated Toeplitz operator $T_\phi^\theta$ is compact if and only if one of the following conditions holds: 
				\begin{enumerate}[label=(\roman*)]
					\item $\bar{\theta}|_{S_m} \in H^\infty |_{S_m}$, 
					\item $\bar{\phi} \in \theta H^\infty|_{S_m},$ 
					\item $\phi\theta^* |_{S_m} \in H^\infty |_{S_m}$.
				\end{enumerate}
				
			\end{theorem}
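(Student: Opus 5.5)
The plan is to follow the same route as for the zero and finite-rank cases, using the decomposition of Lemma~\ref{4l2}:
$$T_\phi^\theta=\f\,\tau_\phi+M_\theta\, h_{\phi\theta^*}.$$
Here $\f$ is unitary and $M_\theta$ is an isometry. Moreover, the ranges of the two summands lie in the orthogonal subspaces $\ho$ and $\theta H^2$ of $(\2)^\perp$. Indeed, $\tau_\phi$ maps into $H^2$, so $\f\tau_\phi$ maps into $\ho$; and $h_{\phi\theta^*}$ maps into $H^2$, so $M_\theta h_{\phi\theta^*}$ maps into $\theta H^2$.

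Because of this orthogonality, composing $T_\phi^\theta$ with the orthogonal projections onto $\ho$ and onto $\theta H^2$ recovers $\f\tau_\phi$ and $M_\theta h_{\phi\theta^*}$ respectively. Compactness of $T_\phi^\theta$ therefore implies compactness of each summand, and the converse is trivial. By Lemma~\ref{4}, $T_\phi^\theta$ is thus compact if and only if both $\tau_\phi$ and $h_{\phi\theta^*}$ are compact.

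Next I apply the earlier corollaries with $\eta\equiv 1$, which is the setting implicit in the definition of the BTTO, since its range splits as $\ho\oplus\theta H^2$.

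\textbf{Compactness of $\tau_\phi$.} By Corollary~\ref{2c1}(iii), $\tau_\phi$ is compact if and only if, for each support set $S_m$, either $\bar\theta|_{S_m}\in H^\infty|_{S_m}$ or $\bar\phi|_{S_m}\in\theta H^\infty|_{S_m}$.

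\textbf{Compactness of $h_{\phi\theta^*}$.} By Corollary~\ref{2c2}(iii) with $\eta=1$, $h_{\phi\theta^*}$ is compact if and only if, for each support set $S_m$, either $\bar\theta|_{S_m}\in H^\infty|_{S_m}$ or $\phi\theta^*|_{S_m}\in H^\infty|_{S_m}$.

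I would double-check this second condition against Lemma~\ref{2l3}, which reduces the question to the product $H_{\bar\theta}H_{\phi\theta^*}$, and Lemma~\ref{2l6}. These ask whether $\breve\theta|_{S_m}$ or $\phi\theta^*|_{S_m}$ lies in $H^\infty|_{S_m}$. I need to reconcile the $\breve\theta$ appearing here with the $\bar\theta$ in Corollary~\ref{2c2}, using $\bar\theta^*=\breve\theta$. This bookkeeping is one place where care is needed.

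Combining the two conditions, $T_\phi^\theta$ is compact if and only if, for each support set $S_m$,
$$\bar\theta|_{S_m}\in H^\infty|_{S_m}\quad\text{or}\quad\Bigl(\bar\phi|_{S_m}\in\theta H^\infty|_{S_m}\ \text{and}\ \phi\theta^*|_{S_m}\in H^\infty|_{S_m}\Bigr).$$
The statement lists three alternatives joined by ``or''. My plan is to show that this combined condition is what is meant, with (ii) and (iii) required jointly whenever (i) fails on a given $S_m$. I would then state the equivalence pointwise in $S_m$.

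The main obstacle is precisely this logical combination. A disjunction of per-support-set conditions, taken for the two operators separately, does not distribute to a plain ``one of three'' statement. I would therefore first verify the orthogonality-of-ranges argument carefully, and then phrase the conclusion as: on each $S_m$, either (i) holds, or both (ii) and (iii) hold. If the paper's intended reading is literally ``one of three'', I would check whether (ii) forces (iii) on support sets where (i) fails, using $\theta|_{S_m}\in H^\infty|_{S_m}$. I expect that this does not hold in general, and if so the statement should be read with (ii) and (iii) conjoined.
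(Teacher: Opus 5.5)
Your argument is the paper's own: Lemma~\ref{4l2}, then reduction to compactness of $\tau_\phi$ and $h_{\phi\theta^*}$, then Corollaries~\ref{2c1}(iii) and \ref{2c2}(iii) with $\eta=1$; your orthogonality of the ranges $\ho$ and $\theta H^2$ is what actually justifies the reduction step, which the paper attributes loosely to Lemma~\ref{4}. Your conjoined conclusion --- on each $S_m$, (i) holds or both (ii) and (iii) hold --- is exactly what the paper's ``combining them'' produces, and you are right that the literal ``one of three'' reading fails: take $\phi=\bar\theta$ with $\theta$ a singular inner function whose singular point $a$ is non-real, so that (ii) holds on every support set; then $f\mapsto\theta^*\f f$ identifies $T_{\bar\theta}^\theta$ with $P_{\theta H^2}|_{K_{\theta^*}}$, which is not compact, as one sees by testing on normalized reproducing kernels of $K_{\theta^*}$ at $r\bar a$ with $r\to1$.
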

			\begin{proof}
				The proof of compactness also follows analogously. The BTTO $T_\phi^\theta$ is compact if and only if both $\tau_\phi$ and $h_{\phi\theta^*}$ are compact. But Corollaries \ref{2c1} and \ref{2c2} implies that $\tau_\phi$ is compact if and only if $\bar{\theta}|_{S_m} \in H^\infty |_{S_m}$ or $\bar{\phi} \in \theta H^\infty|_{S_m}$, and $h_{\phi\theta^*}$ is compact if and only if $\bar{\theta}|_{S_m} \in H^\infty |_{S_m}$ or $\phi\theta^* |_{S_m} \in H^\infty |_{S_m}$. Combining them, we reach the required conclusion.
			\end{proof}
			
			A function $\phi \in L^2$ is said to be \emph{symmetric} if $\phi^* = \phi$. We now present an observation regarding symmetric inner functions.
			
			\begin{corollary}
				When the inner function $\theta$ is symmetric, then both the small and big truncated Toeplitz operators are zero, finite rank, and compact under the same conditions, that is, 
				\begin{enumerate}[label=(\roman*)]
					\item  zero if and only if $\phi\in \vee\{\bar{\theta}\}$. 
					\vspace{0.1in}
					
					\item  finite in rank if and only if either $\theta$ is a finite Blaschke product or $\phi = \bar{\theta}(q_1+\bar{q_2}),$ where $q_1,q_2$ carries the same meaning as \ref{41} of Theorem \ref{4th1}.
					\vspace{0.1in}
					
					\item is compact if and only if for each support set $S_m$ either $\bar{\theta}|_{S_m}$ is constant or $\phi|_{S_m} = \bar{\theta}|_{S_m}.$
				\end{enumerate}
			\end{corollary}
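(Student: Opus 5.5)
The plan is to deduce the corollary directly from Theorem~\ref{4th1} and the two BTTO theorems above. The only input needed is the symmetry hypothesis $\theta^*=\theta$, together with the elementary relation $\breve{\theta}=\overline{\theta^*}$. Indeed, $\theta^*(\xi)=\overline{\theta(\bar\xi)}$ gives $\breve{\theta}(\xi)=\theta(\bar\xi)=\overline{\theta^*(\xi)}$, so symmetry yields $\breve{\theta}=\bar\theta$ and $\phi\theta^*=\phi\theta$.

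\textbf{Step 1: the BTTO conditions.} I substitute $\breve\theta=\bar\theta$ into each BTTO condition.
\begin{itemize}
\item \emph{Zero.} The condition becomes $\phi\in\overline{\theta H^\infty}\cap\bar\theta H^\infty$. Then $\theta\phi\in H^\infty$ and $\overline{\theta\phi}\in H^\infty$, so $\theta\phi$ is a real-valued $H^\infty$ function, hence constant. Thus $\phi\in\vee\{\bar\theta\}$, which is exactly Theorem~\ref{4th1}(i).
\item \emph{Finite rank.} The condition becomes $\theta\phi\in\overline{H^\infty+\mathcal R}\cap(H^\infty+\mathcal R)$. This gives $\phi=\bar\theta(q_1+\bar q_2)$, as in Theorem~\ref{4th1}(ii). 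For the finite Blaschke alternative, I note that when $\theta$ is a finite Blaschke product, $K_\theta$ is finite dimensional, so every operator with domain $K_\theta$ has finite rank. That disjunct can therefore be added to the BTTO statement for free.
\item \emph{Compactness.} The BTTO criterion, read per support set, is that $\bar\theta|_{S_m}\in H^\infty|_{S_m}$, or that both $\bar\phi\in\theta H^\infty|_{S_m}$ and $\phi\theta|_{S_m}\in H^\infty|_{S_m}$ hold. This is how the proof actually combines the corollaries for $\tau_\phi$ and $h_{\phi\theta^*}$. Since $\theta|_{S_m}\in H^\infty|_{S_m}$ always holds, the first alternative forces $\theta|_{S_m}$ to be constant.
\end{itemize}

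\textbf{Step 2: the constant on a support set.} I would argue exactly as in the proof of Theorem~\ref{4th1}(iii). Both $\theta\phi|_{S_m}$ and $\overline{\theta\phi}|_{S_m}$ lie in $H^\infty|_{S_m}$. Functions in $H^\infty|_{S_m}$ that are real-valued must be constant, because $S_m$ is the support of a representing measure of a multiplicative functional and the restricted algebra is antisymmetric. Hence $\theta\phi|_{S_m}$ is constant, giving the same condition as Theorem~\ref{4th1}(iii).

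\textbf{Expected obstacle.} The main difficulty is bookkeeping rather than analysis. The BTTO compactness theorem is stated as three alternatives, but it must be read as ``(i), or (ii) and (iii) together''. The resulting condition $\theta\phi|_{S_m}$ constant then has to be matched with how Theorem~\ref{4th1}(iii) is phrased.
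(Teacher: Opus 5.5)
Your argument is correct, but it works at the level of the symbol criteria, whereas the paper works at the level of operators. The paper notes that $\theta^*=\theta$ gives $h_{\phi\theta^*}=h_{\theta\phi}$. Lemma~\ref{4l2} then reads $T_\phi^\theta=\mathcal{J}\tau_\phi+M_\theta h_{\theta\phi}$, while Lemma~\ref{4l1} gives $t_\phi^\theta=\tau_\phi+M_{\bar\theta}\mathcal{J}h_{\theta\phi}$. In each decomposition the two summands have orthogonal ranges and the outer factors are isometries. So by Lemma~\ref{4}, both operators are zero, of finite rank, or compact exactly when $\tau_\phi$ and $h_{\theta\phi}$ are, and Theorem~\ref{4th1} transfers to $T_\phi^\theta$ verbatim.

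That route is essentially a one-liner and never passes through the two BTTO theorems. This matters because, as you noticed, their statements are defective: the finite-rank one drops the finite Blaschke alternative, and the compactness one must be read as (i) or ((ii) and (iii)) on each $S_m$. Your route has to repair both defects, and you do so correctly. In return it makes explicit that symmetry enters only through $\breve\theta=\bar\theta$ and $\phi\theta^*=\phi\theta$. It also computes the sets concretely, e.g.\ $\overline{\theta H^\infty}\cap\bar\theta H^\infty=\mathbb{C}\bar\theta$, and justifies the antisymmetry step on support sets.

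On your ``expected obstacle'': the condition you reach, that $\theta\phi|_{S_m}$ is constant, is the right one. It is also exactly what the proof of Theorem~\ref{4th1}(iii) establishes. You should say outright that the displayed equality $\phi|_{S_m}=\bar\theta|_{S_m}$ means $\phi|_{S_m}\in\mathbb{C}\,\bar\theta|_{S_m}$, since taken literally it is false. For $\phi=2\bar\theta$, the operator $t_\phi^\theta$ vanishes, and so does $T_\phi^\theta$ when $\theta$ is symmetric. Yet $\phi|_{S_m}\neq\bar\theta|_{S_m}$ on every support set where $\theta$ is nonconstant.
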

			
			\begin{proof}
				If the inner function $\theta$ is symmetric, then, by Lemma~\ref{4l2}, the vanishing, finite-rank, and compactness properties of the BTTO $T_\phi^\theta$ reduce to the corresponding properties of both $\tau_\phi$ and $h_{\theta\phi}$. And these two operators are the same operators that determine the corresponding behavior of the STTO.
			\end{proof}

			\section{Remarks}
			\textbf{\emph{Special Remark:}} Consider the Hankel operator $\ch_\phi:H^2\rightarrow\ho$, defined by $\ch_\phi = QM_\phi|_{H^2}$. Now corresponding to the decompositions $H^2=\1\oplus\2$, and $\ho= \overline{z\1} \oplus \overline{z\2},$ we have the following block-matrix representation:
			$$ \scalebox{1.1}{$\ch_\phi :=
				\begin{array}{cc|c}
					\theta H^2 & K_\theta \\ \cline{1-2}
					\widetilde{\mathbb{H}}_\phi & \widehat{Z} & \overline{\theta H_0^2} \\
					Z & \Gamma_\phi^\theta & \overline{z\2} 
				\end{array}$}. $$
			
			In this study, the operator at the $(2,1)$ position of the matrix above yields another type of restricted operator. We call this operator a small restricted Hankel operator (SRHO) and define it as
			$$\widehat{\h}_\phi:\eta H^2 \rightarrow \overline{z\2} ~ \text{ by } ~ \widehat{\h}_\phi (f) = P_{\bar{\theta}}(\phi f),$$ 
			where $\eta,\theta$ are inner functions and $\phi \in \li.$
			
			Note that due to the unitary equivalence $\f P\f =Q$, the classical Hankel operators $\ch_\phi$ and $H_\phi$ share many key properties. On the other hand, the anti-unitary map $V$ on $L^2$ by $(Vf)(z)= \bar{z}\overline{f(z)}$ (as in \cite{DZ}), yields an equivalence between the projections $\pt$ and $P_{\bar{\theta}}$ because $$ V\pt V^{-1} = P_{\bar{\theta}},$$ which implies RHO $\hp$ and SRHO $\widehat{\hp}$ behave similarly in so many aspects, such as compactness of all kinds. 
			
			\begin{theorem}
				Let $\eta$ and $\theta$ be two inner functions and let $\phi\in \li$. The small restricted Hankel operator $\widehat{\hp}$ is 
				\begin{enumerate}[label=(\roman*)]
					\item zero if and only if $\phi\in \bar{\eta}H^\infty,$
					\item finite rank if and only if either $\theta$ is finite Blaschke product or $\phi \in \bar{\eta}(H^\infty + \mathcal{R})$, and
					\item  compact if and only if either $\bar{\theta}|_{S_m} \in H^\infty|_{S_m}$ or $\phi|_{S_m} \in \bar{\eta}H^\infty|_{S_m}.$
				\end{enumerate}
			\end{theorem}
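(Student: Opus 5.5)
The plan is to reduce $\widehat{\hp}$ to the restricted Hankel operator situation of Lemma~\ref{2l3}, with $\theta$ replaced by the inner function $\theta^*$, and then to read off (i)--(iii) from Theorems~\ref{2t1}, \ref{2t2}, \ref{2t3}.

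\emph{Step 1 (conjugating the projection).} Write $P_{\bar\theta}=Q-M_{\bar\theta}QM_\theta$, as in the proof of Lemma~\ref{4l1}. We use $\f Q\f=P$ together with $\f M_\psi\f=M_{\breve\psi}$. Since $\overline{\breve\theta}=\theta^*$ is again an inner function, this gives
$$\f P_{\bar\theta}\f=P-M_{\theta^*}PM_{\overline{\theta^*}}=P_{\theta^*}, \qquad \text{that is,} \qquad P_{\bar\theta}=\f P_{\theta^*}\f.$$
For $f=\eta g$ with $g\in H^2$ we have $\phi\eta g=P(\phi\eta g)+Q(\phi\eta g)$. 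The projection $P_{\bar\theta}$ annihilates $H^2$, because $\overline{z\2}\subset\ho$. Hence
$$\widehat{\hp}(\eta g)=P_{\bar\theta}Q(\phi\eta g)=\f P_{\theta^*}\f Q(\phi\eta g)=\f P_{\theta^*}P\f(\phi\eta g)=\f P_{\theta^*}H_{\phi\eta}(g).$$
So $\widehat{\hp}$ on $\eta H^2$ is represented by $\f P_{\theta^*}H_{\phi\eta}$ on $H^2$.

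\emph{Step 2 (reduction to a product of Hankel operators).} The operator $\f$ is unitary, so Lemma~\ref{4} lets us drop it. The computation in the proof of Lemma~\ref{2l3}, with $\theta^*$ in place of $\theta$ and using \eqref{2eq}, then gives
$$\widehat{\hp}^*\widehat{\hp}=H_{\phi\eta}^*P_{\theta^*}H_{\phi\eta}=\bigl(H_{\overline{\theta^*}}H_{\phi\eta}\bigr)^*\bigl(H_{\overline{\theta^*}}H_{\phi\eta}\bigr).$$
Therefore $\widehat{\hp}$ is zero, of finite rank, or compact exactly when $H_{\overline{\theta^*}}H_{\phi\eta}$ has the corresponding property.

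\emph{Step 3 (reading off the conditions).} We take $\theta$ non-constant; otherwise $\overline{z\2}=\{0\}$ and everything is trivial.
\begin{enumerate}[label=(\roman*)]
\item By the Brown--Halmos discussion preceding Theorem~\ref{2t1}, the product $H_{\overline{\theta^*}}H_{\phi\eta}$ vanishes iff one of the two factors vanishes. Since $\theta^*$ is a non-constant inner function, $H_{\overline{\theta^*}}\neq0$. So the product is zero iff $\phi\eta\in H^\infty$, that is, iff $\phi\in\bar\eta H^\infty$.
\item By Lemmas~\ref{2l5} and \ref{2l4}, the product has finite rank iff $\phi\eta\in H^\infty+\mathcal R$, or $\overline{\theta^*}\in H^\infty+\mathcal R$. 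The latter happens iff $\theta^*$ is a finite Blaschke product, which happens iff $\theta$ is one, since $\theta^*$ has zeros at the conjugates of the zeros of $\theta$ and $\theta$ is singular iff $\theta^*$ is.
\item By Lemma~\ref{2l6}, compactness holds iff, on each support set $S_m$, either $(\overline{\theta^*})^*|_{S_m}=\bar\theta|_{S_m}\in H^\infty|_{S_m}$ or $\phi\eta|_{S_m}\in H^\infty|_{S_m}$. This is the stated condition.
\end{enumerate}

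The main point to verify carefully is Step~1, namely the identity $P_{\bar\theta}=\f P_{\theta^*}\f$, including the conjugation conventions $\breve{\ }$ versus ${}^*$. It is this identity that turns the small Hankel setting into the already-treated $P_\theta$ setting. Once it holds, Steps 2 and 3 are verbatim repetitions of Lemma~\ref{2l3} and Theorems~\ref{2t1}--\ref{2t3}.
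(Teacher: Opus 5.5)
Your proposal is correct and follows essentially the paper's route. The paper factors $P_{\bar\theta}|_{\ho}=\ch_{\bar\theta}\ch_{\bar\theta}^*$ to reduce $\widehat{\hp}$ to $\ch_{\bar\theta}^*\ch_{\phi\eta}$ and then applies Lemmas~\ref{2l4}, \ref{2l5}, \ref{2l6}. You reach the same operator, $H_{\overline{\theta^*}}H_{\phi\eta}=H_{\breve\theta}H_{\phi\eta}=\ch_{\bar\theta}^*\ch_{\phi\eta}$ by \eqref{relation2}, via the unitary flip identity $P_{\bar\theta}=\f P_{\theta^*}\f$ and the computation of Lemma~\ref{2l3} for $\theta^*$, rather than by the paper's direct factorization or its anti-unitary $V$; your check that $(\overline{\theta^*})^*=\bar\theta$ in Lemma~\ref{2l6} is exactly what produces the $\bar\theta|_{S_m}$ condition in (iii).
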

			\begin{proof}
				One can prove the theorem either by utilizing the above anti-unitary relation or by following a similar argument to that done in Section 2. Here we provide a short outline of it.
				
				The orthogonal projection \begin{align*}
					& P_{\bar{\theta}}= Q - M_ {\bar{\theta}}QM_{\theta}\\
					& \implies P_{\bar{\theta}} |_{\ho} = I-dT_{\bar{\theta}} dT_{\theta} = \ch_{\bar{\theta}} \ch_{\bar{\theta}}^*,
				\end{align*}
				where $dT_{\phi}:\ho\rightarrow\ho$ is the dual Toeplitz operator (see \cite{DS}), defined by $dT_{\phi}(f)=Q(\phi f)$ for $\phi\in\li.$
				
				Now, using the same argument as RHO, we can show that $\widehat{\h_{\phi}}$ is zero, finite rank, or compact if and only if $\ch_{\bar{\theta}}\ch_{\eta \phi}$ is zero, finite rank, or compact, respectively. Therefore using the Lemmas \ref{2l4}, \ref{2l5}, and \ref{2l6}, we achieve our desired results.
			\end{proof}
			
			\textbf{\emph{Concluding Remark:}}  Throughout the article, we have considered that the symbols of RTO, RHO (and SRHO) are essentially bounded. However, they can be presented as densely defined operators, such as, for $\phi\in L^2$ and $\phi f \in \eta H^2 \cap\li$: 
			\begin{enumerate} [label=(\roman*)]
				\item $\tp: \eta H^2\rightarrow\2 ~\text{ by }~ \tp (f):= \pt (\phi f),$
				\item $\hp: \eta H^2\rightarrow\2 ~\text{ by }~ \hp (f):= \pt \f(\phi f), $
				\item $\widehat{\hp}: \eta H^2\rightarrow \overline{z\2} ~\text{ by }~ \widehat{\hp} (f):= P_{\bar{\theta}} (\phi f).$
			\end{enumerate}
			Now, we conclude the article with the following queries.
			
			\begin{enumerate}
				\item What are the conditions required to conclude that these restricted operators are bounded?
				
				\item Do we always have a bounded symbol for a bounded restricted operator?
			\end{enumerate}
			Note that these sorts of open problems for truncated operators (raised by Sarason in \cite{DES}) were addressed in \cite{BCFMT, BBK, GM, RL}.

			\section*{{{Acknowledgment}}}
			We thank Dr. Chandan Pradhan for reading the manuscript and making several helpful suggestions. P. Aroda is supported by NBHM Ph.D. fellowship, File No: 0203/13(18)/2021-R\&D-II/13138, by the Department of Atomic Energy, Government of India. A. Chattopadhyay is supported by the Core Research Grant (CRG), File No: CRG/2023/004826, by the Science and Engineering Research Board(SERB), Department of Science \& Technology (DST), Government of India. S. Jana gratefully acknowledges the support provided by IIT Guwahati, Government of India.

			\noindent $^{*}$ [ P. Aroda ] Department of Mathematics, Indian Institute of Technology Bombay, Mumbai, 400076, India.\\
			\textit{Email address:}~ priyanka.aroda@iitb.ac.in, rspriyanka0412@gmail.com.
			
			\noindent $^{**}$ [ A. Chattopadhyay ] Department of Mathematics, Indian Institute of Technology Guwahati, Guwahati, 781039, India.\\
			\textit{Email address:}~ arupchatt@iitg.ac.in, 2003arupchattopadhyay@gmail.com.

			\noindent $^{***}$ [ S. Jana ] Department of Mathematics, Indian Institute of Technology Guwahati, Guwahati, 781039, India.\\
			\textit{Email address:}~ supratimjana@iitg.ac.in, suprjan.math@gmail.com.
			
		\end{document}